\algrenewcommand\algorithmicrequire{\textbf{Input:}}
\algrenewcommand\algorithmicensure{\textbf{Output:}}
\algrenewcommand\algorithmiccomment[1]{// #1}
\renewcommand{\todo}[2][]{\tikzexternaldisable\@todo[#1]{#2}\tikzexternalenable}
\newcommand{\nomantissatypeset}[1]{\pgfkeys{/pgf/number format/.cd, sci, retain unit mantissa=false}
	\pgfmathprintnumber{#1}
}
\newcommand{\fixedzerofilltypeset}[2]{\pgfkeys{/pgf/number format/.cd, fixed, fixed zerofill, precision=#1}
	\pgfmathprintnumber{#2}
}
\pgfplotsset{compat=1.5}
\theoremstyle{plain}
\newtheorem{thm}{Theorem}
\newtheorem{lem}{Lemma}
\newtheorem{prop}{Proposition}
\newtheorem{cor}{Corollary}
\theoremstyle{definition}
\newtheorem{defn}{Definition}
\theoremstyle{remark}
\newtheorem{rem}{Remark}
\newcommand{\N}{\mathbb{N}}
\newcommand{\Z}{\mathbb{Z}}
\newcommand{\R}{\mathbb{R}}
\newcommand{\C}{\mathbb{C}}
\renewcommand{\P}{\mathbb{P}}
\newcommand{\T}{\mathbb{T}}
\renewcommand{\d}{\mathop{}\!d}
\renewcommand{\vec}{\mathbold}
\newcommand{\restrict}[1]{\rvert_{#1}}
\DeclareMathOperator{\supp}{supp}
\DeclareMathOperator{\Unif}{Unif}
\newcommand{\e}{\mathrm{e}}
\renewcommand{\i}{\mathrm{ i }}
\begin{document}

\title[Sparse spectral methods]{Sparse spectral methods for solving high-dimensional and multiscale elliptic PDEs}
\author[C.\ Gross]{Craig Gross}
\address{Department of Mathematics\\Michigan State University\\619 Red Cedar Road\\East Lansing, MI 48824}
\email{\href{mailto:grosscra@msu.edu}{grosscra@msu.edu}}
\author[M.\ Iwen]{Mark Iwen}
\address{Department of Mathematics\\Michigan State University\\619 Red Cedar Road\\East Lansing, MI 48824}
\address{Department of Computational Mathematics, Science and Engineering\\Michigan State University\\428 S Shaw Lane\\East Lansing, MI 48824}
\email{\href{mailto:iwenmark@msu.edu}{iwenmark@msu.edu}}
\thanks{ This work was supported in part by the National Science Foundation Award Numbers DMS 2106472 and 1912706.}
\date{\today}
\subjclass[2010]{Primary 65N35, 65T40, 35J15;
	Secondary 65D40, 35J05}
\keywords{Spectral methods, sparse Fourier transforms, high-dimensional function approximation, elliptic partial differential equations, compressive sensing, rank-1 lattices}
\begin{abstract}
	In his monograph \emph{Chebyshev and Fourier Spectral Methods}, John Boyd claimed that, regarding Fourier spectral methods for solving differential equations, ``[t]he virtues of the Fast Fourier Transform will continue to improve as the relentless march to larger and larger [bandwidths] continues'' \cite[pg. 194]{boyd_chebyshev_2001}.
This paper attempts to further the virtue of the Fast Fourier Transform (FFT) as not only bandwidth is pushed to its limits, but also the dimension of the problem.
Instead of using the traditional FFT however, we make a key substitution: a high-dimensional, \emph{sparse Fourier transform} (SFT) paired with randomized rank-1 lattice methods.
The resulting \emph{sparse spectral method} rapidly and automatically determines a set of Fourier basis functions whose span is guaranteed to contain an accurate approximation of the solution of a given elliptic PDE.
This much smaller, near-optimal Fourier basis is then used to efficiently solve the given PDE in a runtime which only depends on the PDE's data compressibility and ellipticity properties, while breaking the curse of dimensionality and relieving linear dependence on any multiscale structure in the original problem.
Theoretical performance of the method is established herein with convergence analysis in the Sobolev norm for a general class of non-constant diffusion equations, as well as pointers to technical extensions of the convergence analysis to more general advection-diffusion-reaction equations.
Numerical experiments demonstrate good empirical performance on several multiscale and high-dimensional example problems, further showcasing the promise of the proposed methods in practice.
\end{abstract}
\maketitle

\section{Introduction}\label{sec:introduction}

Consider as a model problem an elliptic PDE with periodic boundary conditions
\begin{equation}
	\label{eq:DiffusionEquation}
	- \nabla \cdot (a \nabla u) = f
\end{equation}
where, for $ \T := \R / \Z$ taken to be the one-dimensional torus, $ a, f: \T^d \rightarrow \R $ are the PDE data, and $ u :\T^d \rightarrow \R $ is the solution.
Herein we propose a two stage method for solving such PDE.  First, we use recently developed SFT methods for high-dimensional functions \cite{gross_sparse_2021} to approximate the Fourier data of both the diffusion coefficient $ a $ and the forcing function $ f $.
So long as the PDE data, $ a $ and $ f $, are well represented by sparse Fourier approximations, we then provide a technique for using the SFT output to find a relatively small number of Fourier coefficients that are guaranteed to reconstruct an accurate approximation of the solution $ u $.  In all, this results in a sublinear-time, curse-of-dimensionality-breaking spectral method for solving non-constant diffusion equations under periodic boundary conditions.
Moreover, the technique presented is theoretically sound, with $ H^1 $ convergence guarantees provided.

These convergence guarantees hinge on a novel analysis of the Fourier-Galerkin representation of a non-constant diffusion operator where we are able to fully characterize the Fourier compressibility of the solution to \eqref{eq:DiffusionEquation} in terms of the Fourier compressibility of the PDE data.
Additionally, we provide algorithmic improvements to the SFT developed in \cite{gross_sparse_2021} that allow the method to run in fully sublinear-time (with respect to the size of the initial frequency set of interest).
This is accompanied by new $ L^\infty $ error guarantees for this SFT which, in addition to the original $ L^2 $ guarantees, allow for the final $ H^1 $ convergence analysis of the spectral method.
We also provide implementations of our methods along with various numerical experiments.
Of special note, we conclude by further extending our methods beyond the simple diffusion equation \eqref{eq:DiffusionEquation} to also apply to multiscale, high-dimensional advection-diffusion-reaction equations including, e.g., the governing equations for flow dynamics in a porous medium used in hydrological modeling \cite{rubio_numerical_2008}.

Solving \eqref{eq:DiffusionEquation} using a traditional Fourier spectral method amounts to replacing the data and the solution with their Fourier series, simplifying the left-hand side into a single Fourier series, matching the Fourier coefficients of both sides, and solving the resulting system of equations for the Fourier coefficients of $ u $.
See Section~\ref{sec:galerkin_spectral_methods} for further explanation of this Galerkin formulation and the related formulations discussed below.

Two main sources of approximation error arise when implementing this technique computationally.
The first is due to truncating the Fourier series involved to a finite number of terms.
The second is due to numerically approximating the Fourier coefficients of the PDE data.
Due to the rich theory of traditional spectral methods, these two sources of error can directly quantify the error of the resulting approximation of $ u $.

\begin{lem}[Strang's lemma, \cite{canuto_spectral_2006}]
	\label{lem:Pseudostrang}
	Let $ u^\mathrm{truncation} $ be the function which has the same Fourier series as $ u $ but truncated in some manner, and $ a^\mathrm{approximate} $ and $ f^\mathrm{approximate} $ be computed using approximations of the Fourier series of $ a $ and $ f $ truncated in the same way as $ u^\mathrm{truncation} $.
	Then the procedure outlined above produces a solution $ u^\mathrm{spectral} $ which satisfies
	\begin{equation*}
		\norm{ u - u^\mathrm{spectral} }_{ H^1 } \lesssim_{a, f} \norm{ u - u^\mathrm{truncation} }_{ H^1 } + \norm{ a - a^\mathrm{approximate} }_{ L^\infty } + \norm{ f - f^\mathrm{approximate} }_{ L^2 }
	\end{equation*}
	where the exact notion of the periodic Sobolev space $ H^1 $ is discussed further in Section~\ref{sec:notation}, and $ \lesssim_{ a, f } $ denotes an upper bound with constants that depend on the PDE data.
\end{lem}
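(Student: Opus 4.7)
The plan is to carry out a standard Strang-lemma argument tailored to the Fourier--Galerkin setting. Write the weak formulation of \eqref{eq:DiffusionEquation} using the bilinear form $B(u, v) = \int_{\T^d} a \, \nabla u \cdot \nabla v \, \d x$ and the linear functional $\ell(v) = \int_{\T^d} f v \, \d x$, so the true mean-zero solution $u$ satisfies $B(u, v) = \ell(v)$ for all $v$ in the appropriate test space. The truncation of the Fourier series of $u$ identifies a finite-dimensional trigonometric polynomial subspace $V_N$ containing $u^{\mathrm{truncation}}$, and the spectral method computes $u^{\mathrm{spectral}} \in V_N$ as the solution of the perturbed Galerkin problem $\tilde B(u^{\mathrm{spectral}}, v) = \tilde \ell(v)$ for all $v \in V_N$, where $\tilde B$ and $\tilde \ell$ are defined as $B$ and $\ell$ but with $a$ replaced by $a^{\mathrm{approximate}}$ and $f$ replaced by $f^{\mathrm{approximate}}$.

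Applying the triangle inequality,
\begin{equation*}
\norm{u - u^{\mathrm{spectral}}}_{H^1} \leq \norm{u - u^{\mathrm{truncation}}}_{H^1} + \norm{u^{\mathrm{truncation}} - u^{\mathrm{spectral}}}_{H^1},
\end{equation*}
so the first term already matches a piece of the desired bound. To handle the second, set $w := u^{\mathrm{truncation}} - u^{\mathrm{spectral}} \in V_N$ and invoke coercivity of $\tilde B$ on $V_N$ to obtain $\norm{w}_{H^1}^2 \lesssim_a \tilde B(w, w)$. Expanding and using $B(u, w) = \ell(w)$,
\begin{equation*}
\tilde B(w, w) = \tilde B(u^{\mathrm{truncation}}, w) - \tilde \ell(w) = (\tilde B - B)(u^{\mathrm{truncation}}, w) + B(u^{\mathrm{truncation}} - u, w) + (\ell - \tilde \ell)(w).
\end{equation*}
Each piece is then estimated directly: H\"older's inequality gives $|(\tilde B - B)(u^{\mathrm{truncation}}, w)| \leq \norm{a - a^{\mathrm{approximate}}}_{L^\infty} \norm{u^{\mathrm{truncation}}}_{H^1} \norm{w}_{H^1}$, where $\norm{u^{\mathrm{truncation}}}_{H^1} \leq \norm{u}_{H^1}$ is absorbed into the $\lesssim_{a, f}$ constant via elliptic regularity; continuity of $B$ gives $|B(u^{\mathrm{truncation}} - u, w)| \lesssim_a \norm{u - u^{\mathrm{truncation}}}_{H^1} \norm{w}_{H^1}$; and Cauchy--Schwarz gives $|(\ell - \tilde \ell)(w)| \leq \norm{f - f^{\mathrm{approximate}}}_{L^2} \norm{w}_{H^1}$. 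Dividing through by $\norm{w}_{H^1}$ and combining with the triangle inequality yields the claimed bound.

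The one step requiring genuine care is establishing that $\tilde B$ remains coercive on $V_N$, since the perturbed Galerkin system must be uniquely solvable and the argument hinges on a uniform lower bound for $\tilde B(w, w) / \norm{w}_{H^1}^2$. Coercivity of $B$ follows from the ellipticity assumption that $a$ is bounded below by a positive constant, and writing $\tilde B(w, w) = B(w, w) + \int_{\T^d}(a^{\mathrm{approximate}} - a) \abs{\nabla w}^2 \, \d x$ shows $\tilde B$ is still coercive provided $\norm{a - a^{\mathrm{approximate}}}_{L^\infty}$ is strictly less than the ellipticity constant of $a$. This is precisely why the bound on $a$ must appear in $L^\infty$ rather than a weaker norm, and why the implicit constants must be permitted to depend on $a$ and $f$.
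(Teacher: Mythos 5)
Your proof is correct and follows essentially the same route as the paper's rigorous version of this statement (Lemma~\ref{lem:StrangsLemma}): define the discrete error $w = u^\mathrm{truncation} - u^\mathrm{spectral}$, use coercivity of the perturbed bilinear form, and split $\tilde B(w,w)$ into a coefficient-perturbation term, a consistency/truncation term, and a data term for $f$ — which is exactly the paper's three-term decomposition of $\vec{L}_{N,s}\hat{\vec{e}}$ written in variational rather than Fourier-coefficient notation. Your closing observation that coercivity of $\tilde B$ requires $\norm{a - a^\mathrm{approximate}}_{L^\infty}$ to be small relative to the ellipticity constant matches the paper's standing hypothesis $a^s \geq a^s_\mathrm{min} > 0$.
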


This is a rough simplification of \emph{Strang's lemma} \cite{canuto_spectral_2006}, which is itself a generalization of the well-known \emph{C\'ea's lemma} (the specific version of this lemma used in this paper is presented and proven in Lemma~\ref{lem:StrangsLemma} below).
Effectively, it states that the spectral method solution is optimal up to its Fourier series truncation and the approximation of the PDE data $ a $ and $ f $.
Thus, analyzing convergence reduces to estimating these two errors.

This outline provides the three primary ingredients for this paper: \begin{enumerate}
	\item a truncation method and the resulting error analysis (Section~\ref{sec:stamping_sets}),
	\item a (sparse) Fourier series approximation technique (Sections~\ref{sec:previous_results} and \ref{sec:improvements_with_randomized_lattices}), and
	\item a version of Strang's lemma that ties everything together (Section~\ref{sec:in_the_language_of_canuto_spectral_2006}).
\end{enumerate}
The final method is given in Algorithm~\ref{alg:SparseSFT}.
Its convergence guarantee in Corollary~\ref{cor:SpectralConvergenceWithSFT} shows that the error in approximating $ u $ converges like the (near-optimal) convergence rates of the SFT approximation error of $ a $ and $ f $ in addition to an exponentially decaying term related to the ellipticity properties of $ a $.

The sections preceding the main theoretical analysis listed above include background on sparse spectral methods and motivation for our techniques (Section~\ref{sec:background_and_motivation}), setting the notation and PDE setup (Sections~\ref{sec:notation} and \ref{sec:elliptic_pde_setup} respectively), and the aforementioned Galerkin formulation of our model PDE underpinning the spectral method approach (Section~\ref{sec:galerkin_spectral_methods}).
The paper is closed with a numerics section (Section~\ref{sec:numerics}) describing the implementation of our technique and a variety of numerical experiments demonstrating the theory.

\section{Background and motivation}\label{sec:background_and_motivation}

We now outline some of the previous literature on spectral methods with an emphasis on exploiting sparsity.
Along the way, various shortcomings will arise, and we will use these as opportunities to motivate and explain our approach in the sequel.

\subsection{Convergence and computational complexity}\label{sub:convergence_and_computational_complexity}

Using a $ d $-dimensional FFT (see, e.g., \cite[Section 5.3.5]{plonka_numerical_2018} for details) to compute $ a^\mathrm{approximate} $ and $ f^\mathrm{approximate} $ in the procedure suggested in Lemma~\ref{lem:Pseudostrang} naturally enforces a Fourier series truncation.
A $ d $-dimensional FFT using a tensorized grid of $ K $ uniformly spaced points in each dimension will produce approximate Fourier coefficients indexed by frequencies in the $ d $-dimensional hypercube on the integer lattice $ \Z^d $ of sidelength $ K $ (note that when when we refer to ``bandwidth'' in a multidimensional sense, we are still referring to the sidelength $ K $ of the hypercube containing these integer frequencies).
The cost of each $ d $-dimensional FFT in general requires more than $ K^d $ operations, as does the linear-system solve (in the absence of any sparsity or other tricks).
Thus, not only do traditional Fourier spectral methods suffer from the curse of dimensionality, but even in moderate dimensions, multiscale problems (i.e., PDE data which require very high bandwidth to be fully resolved) can result in intractable computations.

Note that a standard FFT requires more than $ K^d $ operations in the discussion above exactly because we implicitly chose to expand our PDE data and solution with respect to an impractically huge set of $ K^d $ Fourier basis functions there.  What if we instead expand all of $a$, $f$, and $u$ in terms of the union of their individual best possible $s \ll K^d $ Fourier basis functions from this larger set?  Note that doing so would automatically lead to each term on the right hand side of Lemma~\ref{lem:Pseudostrang} becoming related to a nonlinear best $s$-term approximation error with respect to the Fourier basis in the sense of, e.g., Cohen et al \cite{cohen_compressed_2009}.  Furthermore, whenever these errors decayed fast enough in $s$ it would in fact imply that each of $a$, $f$, and $u$ was effectively sparse/compressible in the Fourier basis, allowing the theory of compressive sensing to imply the sufficiency of a small discretization of \eqref{eq:DiffusionEquation}.  Of course, this procedure is not terribly useful in practice unless one can actually rapidly discover the best possible subset of $s \ll K^d $ Fourier basis functions for each function involved above via, e.g., compressive sensing.  

A naive application of standard compressive sensing theory in pursuit of this strategy flounders in at least two ways here, however:  First, though extremely successful at reducing the number of linear measurements needed in order to reconstruct a given function, standard compressive sensing recovery algorithms such as basis pursuit must still individually represent all $K^d$ basis functions (in this simple case) during the function's numerical approximation.  As a result, no dramatic runtime speedups can be expected here without additional modifications.  Second, standard compressive sensing theory also generally requires direct linear measurements (in the form of, e.g., point samples) to be gathered from the function whose sparse approximation one seeks.  In the case of \eqref{eq:DiffusionEquation} this may be trivially possible for both $a$ and $f$, but is not generally possible for the a priori unknown solution $u$ that one aims to compute (at least, not without additional innovations).  Of course these difficulties can be overcome to various degrees even when using standard compressive sensing reconstruction strategies, and at least one such approach for doing so will be discussed below in Section~\ref{sub:links_to_compressed_sensing}.

In this paper, however, we instead circumvent the two difficulties mentioned above by using modified sparse Fourier transform methods.  SFTs \cite{gilbert2002near,iwen2010combinatorial,hassanieh2012simple,gilbert2014recent,bittens2019deterministic,merhi_new_2019} are compressive sensing algorithms which are highly specialized to take advantage of the number theoretic and algebraic structure of the Fourier basis as much as possible.  As a result, SFTs rarely have to consider Fourier basis functions individually during the reconstruction process, and so can simultaneously reduce both their measurement needs \emph{and} computational complexities to effectively depend only on the number of important Fourier series coefficients in the function one aims to approximate.  In the present setting, this means that SFT algorithms will run in sublinear $o(K^d)$-time, more or less automatically sidestepping the reconstruction runtime issues plaguing standard compressive sensing recovery algorithms which must represent each of the $K^d$-basis functions individually as they run.  To circumvent the issues related to not being able to measure the solution $u$ directly, we then use yet another approach.  Instead of attempting to apply compressive sensing methods to $u$ at all, we instead use the more easily discovered most-significant Fourier basis elements of $a$ and $f$ to predict in advance where the most significant Fourier basis elements of $u$ must reside by analyzing the structure of \eqref{eq:DiffusionEquation}.  Of course, once we have discovered which Fourier basis elements are important in representing $u$ in this fashion, standard Galerkin techniques can then be used to solve a small truncated discretization of \eqref{eq:DiffusionEquation} thereafter.  

\subsection{Prior attempts to relieve dependence on bandwidth via SFT-type methods}\label{sub:attempts_to_relieve_dependence_on_bandwidth}

A key work pioneering the use of SFTs in computing solutions to PDEs is due to Daubechies, et al.\ \cite{daubechies_sparse_2007}.
This work mostly focuses on time-dependent, one-dimensional problems where the spectral scheme is formulated as alternating Fourier-projections and time-steps.
Thus, there is no need to impose an a priori Fourier basis truncation on the solution.
The proposed projection step instead utilizes an SFT at each time step to adaptively retain the most significant frequencies throughout the time-stepping procedure.
Time-independent problems like \eqref{eq:DiffusionEquation} can then be handled by stepping in time until a stationary solution is obtained.

A simplified form of this algorithm is shown to succeed numerically in \cite{daubechies_sparse_2007}, and it is also analyzed theoretically in the case where the diffusion coefficient consists of a known, fine-scale mode superimposed over lower frequency terms.
There, the Fourier-projection step can be considered to be fixed.
However, removing the known fine-scale assumption leads to many difficulties, including the possibility of sparsity-induced omissions in early time steps cascading into larger errors later on.
In this paper, on the other hand, we focus on the case of time-independent problems. 
This allows us to utilize SFTs only once initially.  By doing so we avoid the possibility of SFT-induced error accumulation over many time steps. 
The main difficulty in our analysis then becomes determining how the Fourier-sparse representations of the PDE data discovered by high-dimensional SFTs can be used to rapidly find a suitable Fourier representation of the solution.
This takes the form of mixing the Fourier supports of $ a $ and $ f $ into \emph{stamping sets} (discussed in detail in Section~\ref{sec:stamping_sets}) on which we can analyze the projection error of the solution.
In fact, these stamping sets can be viewed as a modification and generalization of the techniques used in the one-dimensional and known fine-scale analysis from \cite{daubechies_sparse_2007}.

\subsection{Attempts to relieve curse of dimensionality}\label{sub:attempts_to_relieve_curse_of_dimensionality}

Many attempts to overcome the curse of dimensionality in Fourier spectral methods for PDE have focused on using basis truncations which allow for an efficient high-dimensional Fourier transform.
One of the most popular techniques is the sparse grid spectral method, which computes Fourier coefficients on the hyperbolic cross \cite{kupka_sparse_1997,bungartz_sparse_2004,gradinaru_fourier_2007,griebel_sparse_2007,shen_sparse_2010,garcke_fast_2014,dung_hyperbolic_2018}.
In general, a sparse grid method reduces the number of sampling points necessary to approximate the PDE data to $ \mathcal{O}(K \log^{ d - 1 }(K)) $, where $ K $ acts as a type of bandwidth parameter.
Algorithms to compute spectral representations using these sparse sampling grids run with similar complexity.
When used in conjunction with spectral methods for solving PDE, these sparse grid Fourier transforms produce solution approximations with error estimates similar to the full $ d $-dimensional FFT-versions reduced by factors only on the order of $ 1 / \log^{ d - 1 }(K) $.

In the context of sparse grid Fourier transforms, these methods compute Fourier coefficients with frequencies on hyperbolic crosses of similar cardinality to the number of sampling points.
These hyperbolic crosses have intimate links with the space of bounded mixed derivative, in the sense that they are the optimal Fourier-approximation spaces for this class.
Thus, sparse grid Fourier spectral methods are particularly apt for problems where the solution is of bounded mixed derivative, as this produces an optimal $ u - u^\mathrm{truncation} $ term in Lemma~\ref{lem:Pseudostrang} above.

Though sparse-grid spectral methods can efficiently solve a variety of high-dimensional problems, there are clear downsides for the types of problems we target in this paper.
While many problems fit the bounded mixed derivative assumption, and therefore have accurate Fourier representations on the hyperbolic cross, the multiscale, Fourier-sparse problems that we are interested are especially problematic.
In fact, since a hyperbolic cross of bandwidth $ K $ contains only those frequencies $ \vec{k} \in \Z^d $ with $ \prod_{ i = 1 }^d \abs{ k_i } = \mathcal{O}(K) $, $ d $-dimensional frequencies active in all dimensions can have only $ \norm{ \vec{ k } }_\infty = \mathcal{O}(K^{ 1/d }) $.
Thus, in a multiscale problem with even one frequency that interacts in all dimensions, a hyperbolic cross is required with a bandwidth exponential in $ d $ to properly resolve the data.
This then forces the traditionally curse-of-dimensionality-mitigating $ \log^{ d - 1 }(K) $ terms characteristic of sparse grid methods to be at least on the order of $ d^{ d - 1 } $.

\subsection{More on high-dimensional Fourier transforms}

As outlined in Section~\ref{sub:attempts_to_relieve_dependence_on_bandwidth} above, this paper uses sparse Fourier transforms to create an adaptive basis truncation suited to the PDE data.
This mimics a similar evolution in the field of high-dimensional Fourier transforms from sparse grids to more flexible techniques \cite{li_trigonometric_2003,dohler_nonequispaced_2010,munthe-kaas_multidimensional_2012,kammerer_interpolation_2012,garcke_fast_2014,kammerer_approximation_2015,plonka_numerical_2018,gross2021deterministic}.
In particular, the high-dimensional sparse Fourier transforms discussed in Section~\ref{sec:previous_results} originate from a link between early high-dimensional quadrature techniques and Fourier approximations on the hyperbolic cross \cite{kammerer_interpolation_2012,kammerer_approximation_2015}.
Instead of sampling functions on sparse grids, these methods sample high-dimensional functions along a rank-1 lattice.
Rank-1 lattices are described by sampling $ M $ points in $ \T^d $ in the direction of a generating vector $ \vec{ z } \in \N^d $, that is, using the sampling set
\begin{equation*}
	\Lambda(\vec{z}, M) := \left\{ \frac{j}{M} \vec{ z } \bmod \vec{ 1 } \mid j \in \{0, \ldots, M - 1\} \right\}.
\end{equation*}

So long as a rank-1 lattice satisfies certain properties with respect to a frequency space of interest $ \mathcal{I} \in \Z^d $, these sampling points are sufficient to compute the Fourier coefficients of a function on $ \mathcal{I} $ with a length-$ M $ univariate FFT.
Though many references take $ \mathcal{I} $ to be the hyperbolic cross to leverage the well-studied regularity properties and cardinality bounds similarly enjoyed in the sparse-grid literature, rank-1 lattice results are available for arbitrary frequency sets.
The computationally efficient extension of these techniques via sparse Fourier transforms in \cite{gross_sparse_2021} as well as the randomization trick presented in Section~\ref{sec:improvements_with_randomized_lattices} take this frequency set flexibility to its limit, allowing $ \mathcal{I} $ to be the a priori unknown set of the most important Fourier coefficients of the function to be approximated.
This again suggests the applicability of these methods over sparse grid (or other non-sparsity exploiting) Fourier transforms in the context of multiscale problems involving even a small number of Fourier coefficients in extremely high dimensions.

\subsection{Additional links to compressive sensing}\label{sub:links_to_compressed_sensing}

As discussed above, the SFT literature overlaps considerably with the language and techniques of compressive sensing.
As detailed in Section~\ref{sec:previous_results} below, the high-dimensional SFT we use in this paper provides error bounds with best $ s $-term approximation, compressive-sensing-type error guarantees \cite{cohen_compressed_2009}.
As a result, the Fourier coefficients of the PDE data are approximated with errors depending on the compressibility of their true Fourier series, and then the compressibility of the PDE's solution in the Fourier basis is inferred from the Fourier compressibility of the data in a direct and constructive fashion.

Another very successful line of work, however, aims to more directly apply standard compressive sensing reconstruction methods to the general spectral method framework for solving PDEs.
Referred to as CORSING \cite{brugiapaglia_compressed_2015, brugiapaglia_compressed_2016, brugiapaglia_theoretical_2018, brugiapaglia_waveletfourier_2020,brugiapaglia_sparse_2021}, these techniques use compressed sensing concepts to recover a sparse representation of the solution to the system of equations derived from the (Petrov-)Galerkin formulation of a PDE.
These methods have been further extended to the case of pseudospectral methods in \cite{brugiapaglia_compressive_2020}, in which a simpler-to-evaluate matrix equation is subsampled and used as measurements for a compressive sensing algorithm (as an aside, \cite{brugiapaglia_compressive_2020} and discussions with the author served as a primary inspiration for this paper).
This compressive spectral collocation method works by finding the largest Fourier-sine coefficients of the solution with frequencies in the integer hypercube with bandwidth $ K $ by applying Orthogonal Matching Pursuit (OMP) on a set of samples of the PDE data.
By using OMP, the method is able to succeed with measurements on the order of $ \mathcal{O}(d \exp(d) s \log^3(s) \log(K)) $ where $ s $ is the imposed sparsity level of the solution's Fourier series.
Thus, while the $ \mathcal{O}(K^d) $ dependence from a traditional Fourier (pseudo)spectral method is avoided and the method adapts well to large bandwidths, the curse of dimensionality is still apparent.

In the preparation of this paper, the authors became aware of an improvement on \cite{brugiapaglia_compressive_2020} that addresses the curse of dimensionality and is therefore well-suited for similar types of problems discussed in this paper.
In \cite{wang_compressive_2022}, the approach of approximating Fourier-sine coefficients on a full hypercube is replaced with approximating Fourier coefficients on a hyperbolic cross.
This has the effect of converting the linear dependence on $ d $ in the sampling complexity to a $ \log(d) $ due to cardinality estimates of the hyperbolic cross.
However, the $ \exp(d) $ term is refined using a different technique.
The key theoretical ingredient for being able to apply compressive sensing to these problems is bounding the Riesz constants of the basis functions that result after applying the differential operator \cite{brugiapaglia_sparse_2021}.
A careful estimation of these constants on the Fourier basis on the hyperbolic cross is able to entirely remove the exponential in $ d $ dependence, leading to a sampling complexity on the order of $ \mathcal{O}(C_a s \log(d) \log^3(s) \log(K)) $, where $ C_a $ involves terms depending on ellipticity and compressibility properties of $ a $.
Notably, this estimation procedure has connections to our stamping set techniques described in Section~\ref{sec:stamping_sets}.

On the other hand, though focusing on the hyperbolic cross in compressive spectral collocation breaks the curse of dimensionality in the sampling complexity, the method still suffers from the inability to generalize to multiscale problems or generic frequency sets of interest like those described in \ref{sub:attempts_to_relieve_curse_of_dimensionality}.
Additionally, as mentioned in Section~\ref{sub:links_to_compressed_sensing}, the compressive-sensing algorithm used for recovery (in this case OMP) suffers from a computational complexity on the order of the cardinality of the truncation set of interest.
For the hyperbolic cross, this is still exponential in $ \log(d) $.
Finally, the error estimates are presented in terms of the compressibility of the Fourier series of the solution $ u $, which may not be known a priori from the PDE data.
We expect that there may be some way to link our stamping theory and convergence estimates with the compressive sensing theory to refine and generalize both approaches.

\section{Notation}\label{sec:notation}

Define the one-dimensional torus to be $ \T := \R / \Z $.
Unless otherwise stated, all functions are complex-valued and defined on the torus $ \T^d $.
For example, we take the inner product for $ u,v \in L^2 := L^2(\T^d; \C) $ to be
\begin{equation*}
	\langle u, v \rangle_{ L^2 } := \int_{ \T^d } u(\vec{ x }) \overline{v}(\vec{ x }) \d \vec{ x }.
\end{equation*}
Additionally, unless otherwise stated, all multiindexed infinite sequences are complex-valued and indexed on $ \Z^d $.
For example, we take the inner product for $ \hat u, \hat v \in \ell^2 := \ell^2(\Z^d; \C) $ to be
\begin{equation*}
	\langle \hat u, \hat v \rangle_{ \ell^2 } := \sum_{ \vec{ k } \in \Z^d } \hat u_\vec{ k } \overline{\hat v}_\vec{ k }.
\end{equation*}
All finite length vectors/tensors will be denoted in boldface and when required, will be implicitly extended to larger index sets by taking on the value zero wherever they are not originally defined.
We also denote the complex-valued finite-length vectors or infinite-length sequences supported on a set $ \mathcal{D} $ as $ \C^\mathcal{D} $.
Since sparse approximations will be an important tool in our final algorithm, we also define the best $ s $-term approximation of a sequence $ \hat{ u } $ as $ \hat{ u } $ restricted to its $ s $ largest magnitude entries and denote this as $ \hat{ u }_s^\mathrm{opt} $.

We now define periodic Sobolev spaces (see also \cite[Section 2.1]{brugiapaglia_waveletfourier_2020} and \cite[Appendix A.2.2]{kupka_sparse_1997}).
\begin{defn}
	\label{def:PeriodicSobolevSpaces}
	For $ u \in L^2 $ and $ \vec{ \alpha } \in \N_0^d $ a multiindex, if there exists a $ v \in L^2 $ such that 
	\begin{equation*}
		\langle v, \phi \rangle_{ L^2 } = (-1)^{ |\vec{ \alpha }| } \langle u, \partial^\vec{ \alpha } \phi \rangle_{ L^2 } \qquad \text{for all $ \phi \in C^\infty \subset L^2$},
	\end{equation*}
	we call $ v $ the \emph{weak $ \vec{ \alpha } $ derivative of $ u $}, and write $ \partial^\vec{ \alpha } u := v $.
	We define the inner product
\begin{equation*}
	\langle u, v \rangle_{ H^1 } := \langle u, v \rangle_{ L^2 } + \int_{ \T^d } \nabla u( \vec{ x }) \cdot \overline{ \nabla v } ( \vec{ x } ) \d \vec{ x },
	\end{equation*}
	(where all derivatives are taken in the weak sense) and have the associated norm $ \norm{ u }_{ H^1 } := \sqrt{ \langle u, u \rangle_{ H^1 } } $.
	The \emph{periodic Sobolev space $ H^1 $} is defined as $ H^1 := \{u \in L^2 \mid \|u\|_{ H^1 } < \infty \} $.
\end{defn}

In order to set our notation for Fourier coefficients and series, we first note the density of trigonometric monomials in $ L^2 $ and $ H^1 $.

\begin{thm}
	\label{thm:DensityOfTrigMonomials}
	The space of all infinitely differentiable periodic functions $ C^\infty $ is dense in $ L^2 $ and $ H^1 $.
	In particular, space of trigonometric monomials $ \{ e_\vec{ k }(\vec{ x }) := \e^{ 2 \pi \i \vec{ k } \cdot \vec{ x }} \in C^\infty \mid k \in \Z^d \} $ is a basis for $ C^\infty $, an orthonormal basis for $ L^2 $, and an orthogonal basis for $ H^1 $.
\end{thm}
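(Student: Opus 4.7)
The plan is to establish the orthogonality relations by direct computation, and then bootstrap the basis and density claims from Stone--Weierstrass in $L^2$ together with a Parseval-type identity in $H^1$.

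First, I would verify orthogonality by reducing $\langle e_\vec{k}, e_\vec{l}\rangle_{L^2}$ to a product of one-dimensional integrals of $\e^{2\pi\i(k_j - l_j)x_j}$ over $[0,1]$, each vanishing unless $k_j = l_j$. Since $\nabla e_\vec{k} = 2\pi\i\vec{k}\, e_\vec{k}$, the $H^1$ inner product then evaluates to $(1 + 4\pi^2|\vec{k}|^2)\delta_{\vec{k},\vec{l}}$, confirming orthogonality in $H^1$ with squared-norm eigenvalues $1+4\pi^2|\vec{k}|^2$ (so orthogonal but not orthonormal).

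For $L^2$ completeness I would invoke the Stone--Weierstrass theorem on the trigonometric polynomials---a $*$-subalgebra of $C(\T^d)$ that contains the constants and separates points of $\T^d$---yielding uniform density in $C(\T^d)$. Composed with the standard density of $C(\T^d)$ in $L^2$, this makes $\{e_\vec{k}\}$ a complete orthonormal system for $L^2$ and in particular shows that $C^\infty$ is $L^2$-dense. To justify calling $\{e_\vec{k}\}$ a basis of $C^\infty$ (meaning every $u\in C^\infty$ is uniquely represented by its Fourier series converging in the Fr\'echet topology of $C^\infty$), I would use repeated integration by parts to show $|\hat u_\vec{k}| \lesssim_N (1+\|\vec{k}\|_\infty)^{-N}$ for every $N$, which forces absolute convergence of $\sum \hat u_\vec{k} e_\vec{k}$ in every $C^m$ norm; the limit must equal $u$ by the $L^2$ uniqueness just established, and uniqueness of the expansion follows from the $L^2$ orthonormality.

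The $H^1$ portion is where the real work lies. My target is the Parseval-type identity
\begin{equation*}
	\norm{u}_{H^1}^2 = \sum_{\vec{k}\in\Z^d} (1+4\pi^2|\vec{k}|^2)\,|\hat u_\vec{k}|^2, \qquad u \in H^1.
\end{equation*}
To reach it, I would test the weak-derivative definition in Definition~\ref{def:PeriodicSobolevSpaces} against $\phi = e_\vec{k} \in C^\infty$ and, being careful with the complex conjugation in the $L^2$ inner product, read off $\widehat{\partial_j u}_\vec{k} = 2\pi\i k_j\, \hat u_\vec{k}$. Applying the $L^2$ Parseval identity to each $\partial_j u \in L^2$ and summing over $j$ then produces the displayed formula. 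The partial sums $S_N u := \sum_{\|\vec{k}\|_\infty \le N} \hat u_\vec{k} e_\vec{k}$---which are trigonometric polynomials, hence in $C^\infty$---then satisfy $\norm{u - S_N u}_{H^1} \to 0$ since their squared error is precisely the tail of the identity above. This simultaneously yields density of $C^\infty$ in $H^1$ and confirms $\{e_\vec{k}\}$ as an orthogonal basis. The main obstacle I anticipate is establishing the coefficient formula for weak derivatives cleanly: density of $C^\infty$ in $H^1$ is essentially equivalent to it, so some care is needed to avoid circularity, but once this bridge is built the remaining Stone--Weierstrass and Parseval arguments fit together routinely.
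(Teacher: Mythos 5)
The paper states this theorem as classical background and gives no proof of its own, so there is no internal argument to compare yours against; the assessment below is of your proof on its own terms. Your argument is correct and complete, and it follows the standard route: the orthogonality computations, the Stone--Weierstrass argument for $L^2$-completeness, the integration-by-parts rapid-decay argument for the $C^\infty$ basis claim, and the Parseval-type identity $\norm{u}_{H^1}^2 = \sum_{\vec{k}}(1+4\pi^2\abs{\vec{k}}^2)\abs{\hat u_{\vec{k}}}^2$ driving $H^1$-density are all sound. One remark: the circularity you worry about at the end is not actually present. The coefficient identity $\widehat{\partial_j u}_{\vec{k}} = 2\pi\i k_j \hat u_{\vec{k}}$ for $u \in H^1$ follows immediately by taking $\phi = e_{\vec{k}}$ in Definition~\ref{def:PeriodicSobolevSpaces}, since the exponential already lies in $C^\infty$ and is therefore an admissible test function; no prior density statement is needed, and density of $C^\infty$ in $H^1$ is then a consequence rather than a prerequisite. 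With that in hand, the partial sums $S_N u$ satisfy $\partial_j S_N u = S_N(\partial_j u)$, so $\norm{u - S_N u}_{H^1}^2$ is exactly the tail of the Parseval series and the argument closes as you describe.
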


\begin{defn}
	For any $ u \in L^1 $, and any $ \vec{ k } \in \Z^d $, we define the \emph{$ \vec{ k } $th Fourier coefficient}
	\begin{equation*}
		\hat{u}_\vec{ k } = \langle u, e_\vec{ k } \rangle_{ L^2 } = \int_{ \T^d } u(\vec{ x }) \e^{ -2 \pi \i \vec{ k } \cdot \vec{ x } } \d \vec{ x }.
	\end{equation*}
	If $ u \in L^2 $, the orthonormality of the trigonometric monomials in Theorem~\ref{thm:DensityOfTrigMonomials} allows us to write the \emph{Fourier series for $ u $},
	\begin{equation*}
		u(\vec{ x }) = \sum_{ \vec{ k } \in \Z^d } \hat u_\vec{ k } e_{ \vec{ k } }(\vec{ x }).
	\end{equation*}
\end{defn}

We also note the well-known Plancherel's identity for use later.

\begin{prop}[Plancherel's identity]
	\label{prop:Plancherel}
	If $ u \in L^2 $, then $ \hat u \in \ell^2 $ with $ \| u \|_{ L^2 } = \| \hat u \|_{ \ell^2 } $.
	If $ v \in L^2 $, then $ \langle u, v \rangle_{ L^2 } = \langle \hat u, \hat v \rangle_{ \ell^2 } $.
\end{prop}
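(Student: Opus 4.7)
The plan is to leverage Theorem~\ref{thm:DensityOfTrigMonomials}, which already does the heavy lifting by asserting that the trigonometric monomials $\{e_{\vec k}\}_{\vec k \in \Z^d}$ form an orthonormal basis for $L^2$. Once the basis property is in hand, Plancherel becomes a standard Hilbert-space argument: Bessel's inequality followed by a density/best-approximation step, and finally polarization to move from norms to inner products.

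First I would fix any finite $\mathcal{S} \subset \Z^d$ and set $P_{\mathcal{S}} u := \sum_{\vec k \in \mathcal{S}} \hat u_{\vec k} e_{\vec k}$. Using the orthonormality from Theorem~\ref{thm:DensityOfTrigMonomials}, a direct expansion of $\langle u - P_{\mathcal{S}} u, e_{\vec j}\rangle_{L^2}$ for $\vec j \in \mathcal{S}$ shows that $u - P_{\mathcal{S}} u \perp P_{\mathcal{S}} u$, so the Pythagorean identity gives
\begin{equation*}
\|u\|_{L^2}^2 = \|u - P_{\mathcal{S}} u\|_{L^2}^2 + \sum_{\vec k \in \mathcal{S}} |\hat u_{\vec k}|^2.
\end{equation*}
Dropping the nonnegative first term and taking the supremum over all finite $\mathcal{S}$ yields Bessel's inequality $\|\hat u\|_{\ell^2} \leq \|u\|_{L^2}$; in particular $\hat u \in \ell^2$.

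For the matching lower bound, I would invoke the density half of Theorem~\ref{thm:DensityOfTrigMonomials}. Given $\varepsilon > 0$, pick a trigonometric polynomial $p = \sum_{\vec k \in \mathcal{S}} c_{\vec k} e_{\vec k}$ with $\|u - p\|_{L^2} < \varepsilon$. Since $P_{\mathcal{S}} u$ is the orthogonal projection of $u$ onto $\linspan\{e_{\vec k} : \vec k \in \mathcal{S}\}$, it is the best $L^2$ approximation from that subspace, so $\|u - P_{\mathcal{S}} u\|_{L^2} \leq \|u - p\|_{L^2} < \varepsilon$. Feeding this back into the Pythagorean identity shows $\sum_{\vec k \in \mathcal{S}} |\hat u_{\vec k}|^2 \geq \|u\|_{L^2}^2 - \varepsilon^2$, and letting $\mathcal{S}$ grow and $\varepsilon \to 0$ delivers $\|\hat u\|_{\ell^2} = \|u\|_{L^2}$.

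For the second statement, with both $u, v \in L^2$ the polarization identity
\begin{equation*}
\langle u, v\rangle_{L^2} = \tfrac{1}{4}\bigl(\|u+v\|_{L^2}^2 - \|u-v\|_{L^2}^2 + \i\|u+\i v\|_{L^2}^2 - \i\|u-\i v\|_{L^2}^2\bigr),
\end{equation*}
together with the analogous identity on $\ell^2$ and the linearity $\widehat{u+\lambda v} = \hat u + \lambda \hat v$ of the coefficient map, immediately reduces the inner product identity to the norm identity just proved. There is no real obstacle here beyond keeping the density and best-approximation argument straight; the only subtlety is recognizing that the orthonormal basis claim in Theorem~\ref{thm:DensityOfTrigMonomials} is exactly what bridges the finite-$\mathcal{S}$ Pythagorean identity and the full $\ell^2$ equality, which is why the result is stated as an essentially immediate proposition rather than a theorem.
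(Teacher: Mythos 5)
Your proof is correct, and it is the standard textbook derivation: Bessel's inequality via the finite-dimensional Pythagorean identity, the reverse inequality via density and the best-approximation property of orthogonal projection, and polarization to pass from norms to inner products. Note, however, that the paper offers no proof of this proposition at all — it is explicitly cited as "the well-known Plancherel's identity" and used as a classical fact — so there is no argument in the paper to compare against; your write-up simply supplies the standard justification that the authors chose to omit.
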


\begin{defn}
	We additionally define the \emph{mean-zero periodic Sobolev space $ H $} as $ H^1 / \R $ where the representative $ u $ is chosen so that $ \hat u_{ \vec{ 0 } } = 0 $, endowed with the inner product\footnote{
		note that by Proposition~\ref{prop:Plancherel}, $ \langle u, v \rangle_{ H } \simeq \langle u, v \rangle_{ H^1 } $ for $ u, v \in H $.
	}
	\begin{equation*}
		\langle u, v \rangle_{ H } := \int_{ \T^d } \nabla u(\vec{ x }) \cdot \overline{ \nabla v }( \vec{ x } ) \d \vec{ x }.
	\end{equation*}
\end{defn}

In the sequel, we will often consider restrictions in frequency space denoted by, e.g., $ \hat u \restrict{ \mathcal{D} } $, where $ \mathcal{D} \subset \Z^d $.
We will simultaneously consider this to be an element of $ \C^{ \mathcal{D} } $ and a complex valued sequence on $ \Z^d $ with zero entries on $ \Z^d \setminus \mathcal{D} $.
When $ \hat u $ represents the Fourier coefficients of a function $ u $, we define the associated restriction
\begin{equation*}
	u\restrict{ \mathcal{D} } := \sum_{ \vec{ k } \in \Z^d } \left( \hat u\restrict{ \mathcal{D} } \right)_\vec{ k } e_\vec{ k } = \sum_{ \vec{ k } \in \mathcal{D} } \hat u_\vec{ k } e_\vec{ k },
\end{equation*}
where the fact that $ \mathcal{D} \subset \Z^d $ is treated as a set of frequencies indicates that we are restricting $ u $ in frequency, not space.
Given a hatted sequence $ \hat{ v } $ or vector $ \hat{ \vec{ v } } $, the associated function with Fourier series $ \sum_{ \vec{ k } \in \Z^d } \hat{ v }_\vec{ k } e_\vec{ k } $ will always be implicitly labeled using the non-hatted, roman font letter (in this example, $ v $).

\section{Elliptic PDE setup}\label{sec:elliptic_pde_setup}

We begin with a model elliptic partial differential equation.
\begin{defn}
	\label{def:EllipticPDE}
	For some $ a: \T^d \rightarrow \R $ sufficiently smooth, define the \emph{linear, elliptic partial differential operator in divergence form} $ \mathcal{L}[a]:C^2 \rightarrow C^0 $ by
	\begin{equation*}
		\mathcal{L}[a]u = - \nabla \cdot \left( a \nabla u \right).
	\end{equation*}
	If for some $ f: \T^d \rightarrow \R $ sufficiently smooth, $ u \in C^2 $ satisfies
	\begin{equation}
		\mathcal{L}[a]u = f, \tag{SF} \label{eq:StrongPDE}
	\end{equation}
	we say that $ u $ \emph{solves the given elliptic PDE with periodic boundary conditions in the strong form}.

	Now, after multiplying by the complex conjugate of a test function $ v \in H^1(\T^d) $ and integrating by parts, we define the bilinear form associated to $ \mathcal{L}[a] $ as $ \mathfrak{L}[a]: H^1 \times H^1 \rightarrow \C $ with
	\begin{equation*}
		\mathfrak{L}[a](u, v) := \int_{ \T^d } a(\vec{ x }) \nabla u(\vec{ x }) \cdot \overline{ \nabla v }(\vec{ x })  \d \vec{ x },
	\end{equation*}
	and we say that $ u \in H^1 $ \emph{solves the given elliptic PDE with periodic boundary conditions in the weak form} if 
	\begin{equation}
		\mathfrak{L}[a](u, v) = \langle f, v \rangle_{ L^2 } \quad \text{for all } v \in H^1. \tag{WF} \label{eq:WeakPDEWholeTestSpace}
	\end{equation}
	For our purposes, we will take $ a \in L^\infty(\T^d; \R) $, and $ f \in L^2(\T^d; \R) $.
\end{defn}

By the conditions specified in the Lax-Milgram theorem (see, e.g., \cite{evans_partial_2010}), we are guaranteed that a unique mean-zero solution to \eqref{eq:WeakPDEWholeTestSpace} exists so long as the right-hand side and test space is also mean-zero.
See \cite[Proposition~2.1]{brugiapaglia_waveletfourier_2020} for a more specific formulation in our setting and its proof.
\begin{prop}
	\label{prop:ExistenceUniquenessStability}
	For $ a \in L^\infty(\T^d; \R) $, $ \mathfrak{L}[a] $ is continuous with continuity constant $ \beta \leq \norm{ a }_{ L^\infty } $, that is
	\begin{equation}
		\label{eq:Continuity}
		\abs{ \mathfrak{L}[a](u, v) } \leq \beta \norm{ u }_{ H } \norm{ v }_{ H } \qquad \text{for all $ u, v \in H $}.
	\end{equation}
	Additionally, if $ a( \vec{ x }) \geq a_{ \mathrm{min} } > 0 $ a.e.\ on $ \T^d $, then $ \mathfrak{L}[a] $ is also coercive with coercivity constant $ \alpha \geq a_\mathrm{min} $, that is
	\begin{equation}
		\label{eq:Coercivity}
		\abs{ \mathfrak{L}[a](u, u) } \geq \alpha \norm{ u }_{ H }^2 \qquad \text{for all $ u \in H $}.
	\end{equation}
	Under conditions \eqref{eq:Continuity} and \eqref{eq:Coercivity}, if $ f \in L^2(\T^d; \R) $ is mean-zero, that is, $ \hat f_{ 0 } = 0 $, then \eqref{eq:WeakPDEWholeTestSpace} has unique, mean-zero solution $ u \in H $ satisfying
	\begin{equation}
		\label{eq:StabilityEstimate}
		\norm{ u }_{ H } \leq \frac{\norm{ f }_{ L^2 }}{\alpha}.
	\end{equation}
\end{prop}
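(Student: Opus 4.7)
The plan is to verify the three assertions in order and then invoke the Lax-Milgram theorem on the Hilbert space $H$. First I would prove continuity of $\mathfrak{L}[a]$ by a pointwise bound on $a$ followed by the Cauchy--Schwarz inequality in $L^2$: since $\abs{a(\vec{x})} \leq \norm{a}_{L^\infty}$ almost everywhere,
\begin{equation*}
\abs{\mathfrak{L}[a](u,v)} \leq \norm{a}_{L^\infty} \int_{\T^d} \abs{\nabla u(\vec{x})}\,\abs{\nabla v(\vec{x})} \d \vec{x} \leq \norm{a}_{L^\infty} \norm{\nabla u}_{L^2} \norm{\nabla v}_{L^2} = \norm{a}_{L^\infty} \norm{u}_H \norm{v}_H,
\end{equation*}
giving $\beta \leq \norm{a}_{L^\infty}$. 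Coercivity is even more immediate: using $a(\vec{x}) \geq a_\mathrm{min} > 0$ a.e., $\mathfrak{L}[a](u,u) = \int_{\T^d} a \abs{\nabla u}^2 \d \vec{x} \geq a_\mathrm{min} \norm{\nabla u}_{L^2}^2 = a_\mathrm{min} \norm{u}_H^2$, so $\alpha \geq a_\mathrm{min}$.

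Next I would verify that $H$ equipped with $\langle \cdot, \cdot \rangle_H$ is indeed a Hilbert space. Since $\hat{u}_{\vec{0}} = 0$ for $u \in H$, Plancherel (Proposition~\ref{prop:Plancherel}) gives the periodic Poincar\'e inequality
\begin{equation*}
\norm{u}_{L^2}^2 = \sum_{\vec{k} \in \Z^d \setminus \{\vec{0}\}} \abs{\hat{u}_\vec{k}}^2 \leq \sum_{\vec{k} \in \Z^d \setminus \{\vec{0}\}} (2\pi \abs{\vec{k}})^2 \abs{\hat{u}_\vec{k}}^2 = \norm{\nabla u}_{L^2}^2 = \norm{u}_H^2,
\end{equation*}
which both verifies that $\norm{\cdot}_H$ is equivalent to $\norm{\cdot}_{H^1}$ on mean-zero elements (so $H$ inherits completeness from $H^1$) and shows that the linear functional $v \mapsto \langle f, v \rangle_{L^2}$ is bounded on $H$ with operator norm at most $\norm{f}_{L^2}$, using that $f$ is mean-zero so $\langle f, v \rangle_{L^2}$ is well-defined on the quotient.

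With continuity, coercivity, and boundedness of the right-hand side functional established, the Lax--Milgram theorem immediately produces a unique $u \in H$ satisfying \eqref{eq:WeakPDEWholeTestSpace} for all $v \in H$; extending test functions to $H^1$ by splitting off their mean uses mean-zeroness of $f$ to handle the constant component. Finally, the stability estimate \eqref{eq:StabilityEstimate} follows by setting $v = u$ in the weak form and chaining coercivity with Cauchy--Schwarz and the Poincar\'e inequality above:
\begin{equation*}
\alpha \norm{u}_H^2 \leq \mathfrak{L}[a](u,u) = \langle f, u \rangle_{L^2} \leq \norm{f}_{L^2} \norm{u}_{L^2} \leq \norm{f}_{L^2} \norm{u}_H,
\end{equation*}
after which dividing through by $\norm{u}_H$ yields the claim.

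The only subtle point is the mean-zero bookkeeping: one must pass from $H^1$ to $H$ carefully so that the quotient structure, the Poincar\'e constant, and the hypothesis $\hat{f}_{\vec{0}} = 0$ line up to ensure a well-posed Lax--Milgram setup. Everything else is a direct application of standard inequalities.
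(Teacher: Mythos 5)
Your proposal is correct and follows exactly the route the paper intends: the paper gives no proof of its own, deferring to the Lax--Milgram theorem and to \cite[Proposition~2.1]{brugiapaglia_waveletfourier_2020}, and your argument is the standard instantiation of that strategy (pointwise bounds for continuity and coercivity, the periodic Poincar\'e inequality via Plancherel to make $H$ a Hilbert space and bound the right-hand-side functional, then Lax--Milgram and the choice $v = u$ for stability). The mean-zero bookkeeping you flag --- splitting off $\hat v_{\vec 0}$ and using $\hat f_{\vec 0} = 0$ to pass between test spaces $H$ and $H^1$ --- is handled correctly.
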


\section{Galerkin spectral methods}\label{sec:galerkin_spectral_methods}

By Theorem~\ref{thm:DensityOfTrigMonomials}, it is equivalent to replace the weak PDE \eqref{eq:WeakPDEWholeTestSpace} by
\begin{equation*}
	\mathfrak{L}[a](u, e_\vec{ k }) = \langle f, e_\vec{ k } \rangle_{ L^2 } =: \hat f_\vec{ k } \quad \text{for all } \vec{ k } \in \Z^d. \end{equation*}
Rewriting the bilinear form on the left-hand side and using the Fourier series representations of $ a $ and $ u $, we obtain
\begin{align*}
	\mathfrak{L}[a](u, e_\vec{ k }) 
		&= \sum_{ \vec{ l }_1, \vec{ l }_2 \in \Z^d } \hat a_{ \vec{ l }_1 } \hat u_{ \vec{ l }_2 } \int_{ \T^d } e_{ \vec{ l_1 } }( \vec{ x } ) \nabla e_{ \vec{ l_2 } }( \vec{ x } ) \cdot \overline{ \nabla e_{ \vec{ k } } }( \vec{ x } ) \d \vec{ x } \\
		&= \sum_{ \vec{ l }_1, \vec{ l }_2 \in \Z^d } (2 \pi)^2 (\vec{ l_2 } \cdot \vec{ k }) \hat a_{ \vec{ l }_1 } \hat u_{ \vec{ l }_2 } \delta_{ \vec{ l }_1, \vec{ k } - \vec{ l }_2 } \\
		&= \sum_{ \vec{ l } \in \Z^d } (2 \pi)^2 (\vec{ l } \cdot \vec{ k }) \hat a_{ \vec{ k } - \vec{ l } } \hat u_{ \vec{ l } } \\
		&=: (L[\hat a] \hat u)_\vec{ k },
\end{align*}
where $ L[\hat a] $ is an operator in $ \ell^2 $.
This leads to the \emph{Galerkin form} of our PDE,
\begin{equation}
	L[\hat a] \hat u = \hat f \tag{GF} \label{eq:GalerkinForm}.
\end{equation}

The computational advantages of \eqref{eq:GalerkinForm} are clear.
By numerically approximating $ \hat a $ and $ \hat f $ (thereby also truncating $ L[\hat a] $), we arrive at a discretized, finite system of equations that can be solved for the Fourier coefficients of our solution.

We will use a fast sparse Fourier transform (SFT) for functions of many dimensions to approximate our PDE data which then leads to a sparse system of equations that we can quickly solve to approximate $ \hat u $.
This SFT will use the values of $ a $ and $ f $ at equispaced nodes on a randomized rank-1 lattice in $ \T^d $, and therefore, our technique is effectively a pseudospectral method where the discretization of the solution space $ \{ \hat u \mid u \in H \} $ is adapted to the PDE data.

Before we move to the detailed discussion of this SFT, we provide a more detailed analysis of the Galerkin operator in Section~\ref{sec:stamping_sets} to help us analyze the resulting spectral method.
But first, we note that $ L[\hat a] $ also captures the behavior of $ \mathfrak{L}[a] $ as a bilinear form.
\begin{prop}
	\label{prop:GalerkinBilinearForm}
	For $ \hat u, \hat v \in \ell^2 $ with $ u, v \in H $, 
	\begin{equation*}
		\mathfrak{L}[a](u, v) = \langle L[\hat a] \hat u, \hat v \rangle_{ \ell^2 }.
	\end{equation*}
\end{prop}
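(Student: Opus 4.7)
The plan is to leverage the computation the paper has already performed just above the statement: for each frequency $\vec{k} \in \Z^d$, the derivation of the Galerkin form established
\begin{equation*}
    \mathfrak{L}[a](u, e_{\vec{k}}) = (L[\hat{a}]\hat{u})_{\vec{k}}.
\end{equation*}
Given this, the result for a general $v \in H$ with Fourier series $v = \sum_{\vec{k}} \hat{v}_{\vec{k}} e_{\vec{k}}$ should follow by expanding $v$ in the second argument, pulling the sum out using sesquilinearity, and then recognizing the remaining expression as the $\ell^2$ inner product of $L[\hat{a}]\hat{u}$ with $\hat{v}$.

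Concretely, the first step is to note that $\mathfrak{L}[a](u, \cdot)$ is a continuous conjugate-linear functional on $H$ because $a \in L^\infty$ and $\nabla u \in (L^2)^d$ (continuity is a piece of Proposition~\ref{prop:ExistenceUniquenessStability}). The second step is to write $v = \lim_{N \to \infty} v_N$ in $H$, where $v_N := \sum_{\vec{k} \in \mathcal{D}_N} \hat{v}_{\vec{k}} e_{\vec{k}}$ for any exhausting family of finite sets $\mathcal{D}_N \uparrow \Z^d$; this convergence is guaranteed by Theorem~\ref{thm:DensityOfTrigMonomials}. Conjugate linearity and continuity then yield
\begin{equation*}
    \mathfrak{L}[a](u, v) = \lim_{N \to \infty} \sum_{\vec{k} \in \mathcal{D}_N} \overline{\hat{v}_{\vec{k}}}\, \mathfrak{L}[a](u, e_{\vec{k}}) = \sum_{\vec{k} \in \Z^d} \overline{\hat{v}_{\vec{k}}}\, (L[\hat{a}]\hat{u})_{\vec{k}}.
\end{equation*}

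The third step is to verify that this last series is exactly $\langle L[\hat{a}]\hat{u}, \hat{v}\rangle_{\ell^2}$, which amounts to checking that $L[\hat{a}]\hat{u}$ indeed lies in $\ell^2$ so that the Cauchy--Schwarz estimate in the definition of the $\ell^2$ inner product applies. This follows from the fact that $(L[\hat{a}]\hat{u})_{\vec{k}}$ equals $\mathfrak{L}[a](u, e_{\vec{k}})$, together with the continuity bound $|\mathfrak{L}[a](u, e_{\vec{k}})| \leq \beta \|u\|_H \|e_{\vec{k}}\|_H$, combined with Plancherel (Proposition~\ref{prop:Plancherel}) applied to the Fourier expansion of $\nabla u$; alternatively, one can just compute $L[\hat{a}]\hat{u}$ via the convolution-style formula and bound it using $\hat{a} \in \ell^\infty$ (since $a \in L^\infty$) and $(2\pi \vec{l})\hat{u}_{\vec{l}} \in \ell^2$ (since $u \in H$).

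The main obstacle I expect is not any one of these steps individually but rather cleanly justifying the interchange of the limit and the infinite sum over $\vec{k}$ in the passage displayed above — in particular, ensuring absolute convergence of $\sum_{\vec{k}} \overline{\hat{v}_{\vec{k}}} (L[\hat{a}]\hat{u})_{\vec{k}}$ so that the $\ell^2$ pairing genuinely represents the limit of finite partial sums. Once the membership $L[\hat{a}]\hat{u} \in \ell^2$ is in hand, Cauchy--Schwarz against $\hat{v} \in \ell^2$ supplies the needed absolute convergence and the identification $\mathfrak{L}[a](u,v) = \langle L[\hat{a}]\hat{u}, \hat{v}\rangle_{\ell^2}$ is immediate.
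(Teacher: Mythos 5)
Your overall route is exactly the paper's: expand $v$ in its Fourier series, use sesquilinearity and continuity of $\mathfrak{L}[a](u,\cdot)$ to pull the sum out, invoke the per-mode identity $\mathfrak{L}[a](u,e_{\vec{k}}) = (L[\hat a]\hat u)_{\vec{k}}$ from the Galerkin derivation, and identify the resulting series with the $\ell^2$ pairing. The paper's proof is precisely this computation, stated in one line with no discussion of convergence, so your first two steps are a faithful (and more careful) rendering of it.

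However, the justification you give in your third step is flawed. The claim that $L[\hat a]\hat u \in \ell^2$ does not follow from the continuity bound: $\abs{\mathfrak{L}[a](u,e_{\vec{k}})} \leq \beta \norm{u}_H \norm{e_{\vec{k}}}_H = 2\pi \beta \norm{u}_H \,\abs{\vec{k}}$ grows with $\abs{\vec{k}}$ rather than decaying, and indeed $L[\hat a]\hat u$ corresponds to $-\nabla\cdot(a\nabla u)$, which for $u$ merely in $H$ lives in $H^{-1}$, not $L^2$; so $L[\hat a]\hat u \notin \ell^2$ in general. Your proposed alternative via the convolution formula fails for the same reason (the uncontrolled factor of $\vec{k}$, and a convolution of an $\ell^\infty$ sequence with an $\ell^2$ sequence is not bounded in $\ell^2$ anyway). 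The absolute convergence you correctly identify as the crux is still true, but for a different reason: writing $(L[\hat a]\hat u)_{\vec{k}} = \mathfrak{L}[a](u,e_{\vec{k}}) = -2\pi\i\, \vec{k}\cdot \widehat{(a\nabla u)}_{\vec{k}}$ and moving the factor $2\pi \i \vec{k}$ onto $\hat v$ gives
\begin{equation*}
	\sum_{\vec{k}\in\Z^d} \abs{(L[\hat a]\hat u)_{\vec{k}}}\,\abs{\hat v_{\vec{k}}} \leq \sum_{\vec{k}\in\Z^d} \abs*{\widehat{(a\nabla u)}_{\vec{k}}}\,\abs{2\pi\vec{k}\,\hat v_{\vec{k}}} \leq \norm{a\nabla u}_{L^2}\norm{v}_H < \infty
\end{equation*}
by Cauchy--Schwarz and Plancherel, since $a\nabla u\in (L^2)^d$ and $v\in H$. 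With that substitution your argument closes; the pairing should be read as an $H^{-1}$--$H^1$ duality pairing written in $\ell^2$ notation rather than an inner product of two $\ell^2$ sequences.
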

\begin{proof}
	By the Fourier series representation of $ v $,
	\begin{equation*}
		\mathfrak{L}[a](u, v) = \sum_{ \vec{ k } \in \Z^d } \mathfrak{L}[a](u, e_\vec{ k }) \overline{\hat v}_\vec{ k } = \sum_{ \vec{ k } \in \Z^d } \left( L[\hat a] \hat u \right)_{ \vec{ k } } \overline{ \hat v }_{ \vec{ k } } = \langle L[\hat a] \hat u, \hat v \rangle_{ \ell^2 }.
	\end{equation*}
\end{proof}

\section{Stamping sets and truncation analysis}\label{sec:stamping_sets}

Notably, \eqref{eq:GalerkinForm} gives us insight into the frequency support of $ \hat u $.
The structure outlined in the following proposition is crucial in constructing a fast spectral method that exploits Fourier-sparsity.
\begin{prop}
	\label{prop:SolutionSupport}
	For any set $ F \subset \Z^d $ and $ N \in \N_0 $, recursively define the sets
	\begin{equation}
		\label{eq:StampSet}
		\begin{aligned}
			\mathcal{S}^N[\hat a](F) &:= 
			\begin{cases}
				F & \text{if } N = 0 \\
				\mathcal{S}^{ N - 1 }[\hat a](F) + \supp(\hat a)  & \text{if } N > 0
			\end{cases},\\
			\mathcal{S}^\infty[\hat a](F) &:= \bigcup_{ N = 0 }^\infty \mathcal{S}^N[\hat a](F),
		\end{aligned}
	\end{equation}
	where here, we addition is the Minkowski sum of sets.
	Under the conditions of Proposition~\ref{prop:ExistenceUniquenessStability}, $ \supp(\hat u) \subset \mathcal{S}^\infty[\hat a](\supp(\hat f)) $.
\end{prop}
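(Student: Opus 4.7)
Let me write out my plan for proving this proposition.

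My plan is to show that the proposition follows from the uniqueness part of Proposition~\ref{prop:ExistenceUniquenessStability} by checking that the ``truncated'' function obtained by restricting $\hat u$ to $T := \mathcal{S}^\infty[\hat a](\supp(\hat f))$ also solves the Galerkin equation \eqref{eq:GalerkinForm}, and hence weakly solves the PDE by Proposition~\ref{prop:GalerkinBilinearForm}. The first step is to extract the two structural properties of $T$ we will need: (i)~$\supp(\hat f)\subseteq T$, immediate from $\mathcal{S}^0 = \supp(\hat f)$; and (ii)~$T \pm \supp(\hat a) \subseteq T$. For the plus direction, writing $T+\supp(\hat a) = \bigcup_N \mathcal{S}^N[\hat a](\supp(\hat f)) + \supp(\hat a) = \bigcup_N \mathcal{S}^{N+1}[\hat a](\supp(\hat f)) \subseteq T$ does it. For the minus direction, since $a$ is real-valued, $\hat a_{-\vec{k}} = \overline{\hat a_\vec{k}}$, so $\supp(\hat a)$ is symmetric about the origin, and the two directions coincide.

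Next, define $\hat u^T := \hat u\restrict{T}$ and let $u^T$ be the associated function. Since the magnitudes of the Fourier coefficients of $u^T$ are dominated term-wise by those of $u$, we have $u^T \in H^1$, and since $\hat u_\vec{0} = 0$ (by mean-zero-ness of $u$) we also get $\hat u^T_\vec{0} = 0$, so $u^T \in H$. Thus $u^T$ is a valid candidate solution of \eqref{eq:WeakPDEWholeTestSpace}.

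The heart of the argument is to show $L[\hat a]\hat u^T = \hat f$ coefficient-wise. I would split into two cases for $\vec{k} \in \Z^d$. If $\vec{k}\notin T$, then $\hat f_\vec{k} = 0$ by property~(i), and
\begin{equation*}
(L[\hat a]\hat u^T)_\vec{k} = \sum_{\vec{l}\in T} (2\pi)^2(\vec{l}\cdot\vec{k})\,\hat a_{\vec{k}-\vec{l}}\,\hat u_\vec{l}.
\end{equation*}
Any nonzero summand would require $\vec{k}-\vec{l}\in \supp(\hat a)$ with $\vec{l}\in T$, forcing $\vec{k}\in T+\supp(\hat a)\subseteq T$, contradicting $\vec{k}\notin T$; hence $(L[\hat a]\hat u^T)_\vec{k} = 0 = \hat f_\vec{k}$. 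If instead $\vec{k}\in T$, I would compare against the known identity $(L[\hat a]\hat u)_\vec{k} = \hat f_\vec{k}$: the difference is $\sum_{\vec{l}\notin T}(2\pi)^2(\vec{l}\cdot\vec{k})\,\hat a_{\vec{k}-\vec{l}}\,\hat u_\vec{l}$, whose nonzero summands require $\vec{k}-\vec{l}\in \supp(\hat a)$ with $\vec{l}\notin T$; but then $\vec{l} = \vec{k} - (\vec{k}-\vec{l}) \in T - \supp(\hat a) \subseteq T$ by property~(ii), a contradiction. So the difference vanishes and $(L[\hat a]\hat u^T)_\vec{k} = \hat f_\vec{k}$ in this case too.

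Combining, $L[\hat a]\hat u^T = \hat f$, so by Proposition~\ref{prop:GalerkinBilinearForm} and Plancherel (Proposition~\ref{prop:Plancherel}), $\mathfrak{L}[a](u^T,v) = \langle f,v\rangle_{L^2}$ for all $v\in H$. The uniqueness clause of Proposition~\ref{prop:ExistenceUniquenessStability} then forces $u^T = u$, i.e., $\hat u_\vec{k} = 0$ for every $\vec{k}\notin T$, which is exactly $\supp(\hat u)\subseteq T$. I expect the main obstacle to be nothing deep but rather the bookkeeping around the two Minkowski-sum inclusions and the fact that one of the cases uses $T+\supp(\hat a)\subseteq T$ while the other uses $T-\supp(\hat a)\subseteq T$; pinpointing that the symmetry of $\supp(\hat a)$ (from real-valuedness of $a$) is what reduces these to the same statement is the one substantive observation needed.
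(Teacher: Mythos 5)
Your proof is correct, and it reaches the conclusion by a mildly different route than the paper. The paper argues ``locally'': it rearranges $(L[\hat a]\hat u)_{\vec{k}} = \hat f_{\vec{k}}$ to show that $\hat u_{\vec{k}}$ depends only on the values of $\hat u$ over the orbit $\mathcal{S}^\infty[\hat a](\{\vec{k}\})$, so the Galerkin system decouples into disjoint subsystems indexed by these orbits, and uniqueness (applied to the homogeneous problem $L[\hat a]\hat v = 0$) forces $\hat u$ to vanish on every orbit that misses $\supp(\hat f)$. You instead argue ``globally'': you truncate $\hat u$ to the single set $T = \mathcal{S}^\infty[\hat a](\supp(\hat f))$, verify via the Minkowski-sum closure properties $T \pm \supp(\hat a) \subseteq T$ that the truncation still solves $L[\hat a]\hat u^T = \hat f$, and apply uniqueness once to the inhomogeneous problem to get $u^T = u$. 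The underlying mechanism is identical --- the support-additivity of $L[\hat a]$ plus symmetry of $\supp(\hat a)$ plus Lax--Milgram uniqueness --- but your packaging has the advantage of making fully explicit the step the paper leaves informal, namely why the restriction of $\hat u$ to a union of orbits, extended by zero, is itself an admissible element of $H$ solving the appropriate equation on all of $\Z^d$; the paper's orbit decomposition, in turn, gives slightly finer information (it shows $\hat u$ vanishes on every orbit disjoint from $\supp(\hat f)$, not just outside $T$, though the two statements coincide here). Both are complete proofs.
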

\begin{proof}
	The fact that $ a $ is strictly positive implies that $ \hat a_\vec{ 0 } \neq 0 $, and the fact that $ a $ is real implies $ \supp(\hat a) = - \supp(\hat a) $.
	Now, for any $ \vec{ k } \in \mathcal{\Z}^d \setminus \{\vec{ 0 }\} $, we may rearrange the equality $ (L[\hat a]\hat u)_\vec{ k } = \hat f_\vec{ k } $ to obtain
	\begin{align*}
		\hat u_{ \vec{ k } } 
			&= \frac{\hat f_\vec{ k } - \sum_{ \vec{ l } \in (\{\vec{ k }\} + \supp(\hat a)) \setminus \{ \vec{ k } \}} (2\pi)^2 (\vec{ l } \cdot \vec{ k }) \hat a_{ \vec{ k } - \vec{ l }  } \hat u_{ \vec{ l } }}{ (2\pi)^2 (\vec{ k } \cdot \vec{ k }) \hat a_\vec{ 0 }} \\
			&= \frac{\hat f_\vec{ k } - \sum_{ \vec{ l } \in \supp(\hat a) \setminus \{ \vec{ 0 } \}} (2\pi)^2 (\vec{ k } \cdot \vec{ k } - \vec{ l } \cdot \vec{ k }) \hat a_{\vec{ l }} \hat u_{ \vec{ k } - \vec{ l } }}{ (2\pi)^2 (\vec{ k } \cdot \vec{ k }) \hat a_\vec{ 0 }}.
	\end{align*}

	Thus, $ \hat u_\vec{ k } $ explicitly depends only on the values of $ \hat u $ on $ \mathcal{S}^1[\hat a](\{ \vec{ k } \}) \setminus \{ \vec{ k } \}  $, which themselves then depend only on values of $ \hat u $ on $ \mathcal{S}^2[\hat a]( \{ \vec{ k } \}) $, and so on.
	This decouples the system of equations $ L[\hat a] \hat u $ into a disjoint collection of systems of equations, one for each class of frequencies $ \mathcal{S}^\infty[\hat a]( \{\vec{ k }\} ) $.  
	Since Proposition~\ref{prop:ExistenceUniquenessStability} implies that $ \hat v = 0 $ is the unique solution of $ L[\hat a] \hat v = 0 $, the unique solution of the system of equations for $ \hat u $ on $ \mathcal{S}^\infty[\hat a](\{ \vec{ k } \}) $ for any $ \vec{ k } \notin \supp(\hat f) $ is $ \hat u\rvert_{ \mathcal{S}^\infty[\hat a](\{ \vec{ k } \}) } = 0 $.
	Therefore, $ \supp \hat u \subset \mathcal{S}^\infty[\hat a]( \supp(\hat f)) $ as desired.
\end{proof}

In what follows, when the set $ F $ and Fourier coefficients $ \hat{a} $ are clear from context, we suppress them in the notation given by \eqref{eq:StampSet} so that $ \mathcal{S}^N := \mathcal{S}^N[\hat{a}](F) $.
Intuitively, we can imagine constructing $ \mathcal{S}^N $ by first creating a ``rubber stamp'' in the shape of $ \supp(\hat{ a }) $.
This rubber stamp is then stamped onto every frequency in $ F =: \mathcal{S}^{0} $ to construct $ \mathcal{S}^1 $.
Then, this process is repeated, stamping each element of $ \mathcal{S}^1 $ to produce $ \mathcal{S}^2 $, and so on.
For this reason, we will colloquially refer to these as ``stamping sets.''
Figure~\ref{fig:Stamp} gives an example of this stamping procedure for $ d = 2 $.

\tikzdvdeclarestylesheetcolorseries{stamp hue}{hsb}{0.65,1,1}{0,-.15,0}
\newcommand{\stamp}[1]{
	\begin{tikzpicture}[scale=0.75]
		\datavisualization[	scientific axes,
						all axes={unit length=2cm per 10 units, ticks={none}},
						visualize as scatter/.list={0,...,#1},
						style sheet=vary hue,
						style sheet=vary mark,
]
	data [format = table, read from file=Figures/stamp.csv]
	info {
		\node at (0, 2.3) [above] {$\mathcal{S}^{#1}[\hat{ a }]\left(\supp(\hat{ f })\right)$};
	};
	\end{tikzpicture}
}
\begin{figure}[ht]
	\begin{center}
	\begin{tikzpicture}[scale=0.75]
		\datavisualization[	scientific axes,
						all axes={unit length=2cm per 10 units, ticks={none}},
						visualize as scatter/.list={-1},
						every visualizer/.style={mark=star},
						]
		data [format = table, read from file=Figures/stamp.csv]
		info {
			\node at (0, 2.3) [above] {$\supp(\hat{ a })$};
		};
	\end{tikzpicture}
	\begin{tikzpicture}[scale=0.75]
		\datavisualization[	scientific axes,
						all axes={unit length=2cm per 10 units, ticks={none}},
						visualize as scatter/.list={0},
						every visualizer/.style={mark=square*},
						style sheet=vary hue,
						style sheet=vary mark,
						]
	data [format = table, read from file=Figures/stamp.csv]
	info {
		\node at (0, 2.3) [above] {$\supp(\hat{ f }) = \mathcal{S}^0[\hat{ a }]\left(\supp(\hat{ f })\right)$};
	};
	\end{tikzpicture}
	\stamp{1}
	\\[1ex]
	\stamp{2}
	\begin{tikzpicture}[scale=0.75]
		\datavisualization[	scientific axes,
						all axes={unit length=2cm per 10 units, ticks={none}},
						visualize as scatter/.list={0,...,3},
						style sheet=vary mark,
						style sheet=vary hue,
						legend=right,
						0={label in legend={text={$N=0$}}},
						1={label in legend={text={$N=1$}}},
						2={label in legend={text={$N=2$}}},
						3={label in legend={text={$N=3$}}},
						]
	data [format = table, read from file=Figures/stamp.csv]
	info {
		\node at (0, 2.3) [above] {$\mathcal{S}^{3}[\hat{ a }]\left(\supp(\hat{ f })\right)$};
	};
	\end{tikzpicture}
	\end{center}
	\caption{New frequencies in each stamping level up to $ N = 3 $ where $ N = 0 $ is $ \supp(\hat{ f }) $.}
	\label{fig:Stamp}
\end{figure}
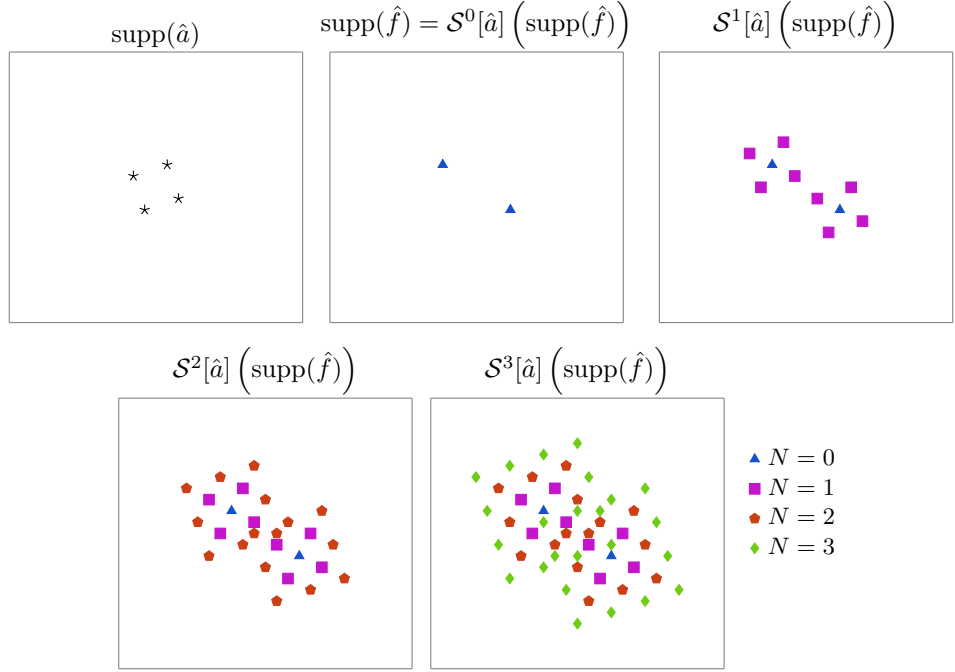
A key approach of our further analysis will be analyzing the decay of $ \hat{ u } $ on successive stamping levels.
The stamping level will become the driving parameter in the spectral method rather than bandwidth in a traditional spectral method.
Before moving onto this analysis however, we provide an upper bound for the cardinality of the stamping sets.
This will ultimately be used to upper bound the computational complexity of our technique.
The proof of this bound is given in Appendix~\ref{sec:stamp_set_cardinality_bound}.

\begin{lem}
	\label{lem:StampSizeUpperBound}
	Suppose that $ \vec{ 0 } \in \supp(\hat{ a }) $, $ \supp(\hat{ a }) = -\supp(\hat{ a }) $, and $ \abs{ \supp(\hat{ f }) } \leq \abs{ \supp(\hat{ a }) } = s $.
	Then
	\begin{equation*}
		\abs{ \mathcal{S}^N[\hat{ a }](\supp (\hat{ f }))} \leq 7 \max(s, 2N + 1)^{ \min(s, 2N + 1)  }.
	\end{equation*}
\end{lem}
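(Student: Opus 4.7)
The plan is to set $A := \supp(\hat{a})$ and $F := \supp(\hat{f})$ so that $\mathcal{S}^N = F + NA$ where $NA$ denotes the $N$-fold Minkowski sum of $A$ with itself, bound $|NA|$ by a stars-and-bars count, and then split into two regimes according to whether $s \leq 2N+1$ or $s > 2N+1$.

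First I would exploit the hypothesis $\vec{0} \in A$. Writing $A \setminus \{\vec{0}\} = \{\vec{a}_1, \ldots, \vec{a}_{s-1}\}$, every element of $NA$ admits a representation $\sum_{i=1}^{s-1} c_i \vec{a}_i$ with $c_i \in \N_0$ satisfying $\sum_i c_i \leq N$ (any shortfall from $N$ summands is filled with copies of $\vec{0}$). Stars-and-bars then yields $|NA| \leq \binom{N+s-1}{s-1}$, and combined with $|F| \leq s$ this gives the master bound $|\mathcal{S}^N| \leq s\binom{N+s-1}{s-1}$.

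The finish is algebraic: I would use whichever of the two symmetric factorizations
\[
\binom{N+s-1}{s-1} = \prod_{i=1}^{s-1} \frac{N+i}{i} \leq (N+1)^{s-1}, \qquad \binom{N+s-1}{N} = \prod_{i=1}^{N} \frac{s-1+i}{i} \leq s^{N}
\]
has fewer factors in the given regime (each inequality follows since every individual factor is bounded by $N+1$ or $s$, respectively). When $s \leq 2N+1$, the first estimate combined with $N+1 \leq 2N+1$ gives $|\mathcal{S}^N| \leq s(2N+1)^{s-1} \leq (2N+1)^s$; when $s > 2N+1$, the second gives $|\mathcal{S}^N| \leq s \cdot s^N = s^{N+1} \leq s^{2N+1}$. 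In either regime the bound comes out below $7\max(s,2N+1)^{\min(s,2N+1)}$.

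The main obstacle is really only a presentational one: recognizing that the ``right'' form of $\binom{N+s-1}{s-1}$ for each regime is the one with the smaller upper index in the product, which is what produces the exponent $\min(s,2N+1)$ rather than a crude bound like $s^{N+1}$ that would be loose for moderately large $N$. The factor of $7$ is slack — the argument in fact yields the bound with constant $1$ — and the symmetry hypothesis $\supp(\hat{a}) = -\supp(\hat{a})$ plays no role in this purely combinatorial estimate (it is used elsewhere, in Proposition~\ref{prop:SolutionSupport}).
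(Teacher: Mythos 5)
Your proof is correct, and it takes a genuinely different and more elementary route than the paper's. The paper first proves a separate combinatorial bound (Lemma~\ref{lem:StampSetCardinality}): it decomposes $ \mathcal{S}^N $ into the ``new'' frequencies appearing at each stamping level $ n \leq N $, uses the symmetry $ \supp(\hat{ a }) = -\supp(\hat{ a }) $ to split $ \supp(\hat{ a }) \setminus \{\vec{ 0 }\} $ into $ A \sqcup -A $ with $ \abs{ A } = (s-1)/2 $, and encodes each new frequency by a sign vector and a multiplicity vector supported on $ t $ coordinates, yielding $ \abs{ \supp(\hat{ f }) } \sum_{ n = 0 }^N \sum_{ t } 2^t \binom{(s-1)/2}{t}\binom{n-1}{t-1} $; the stated lemma is then a two-case estimate of that double sum. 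You instead note that $ \mathcal{S}^N $ is $ \supp(\hat{ f }) $ plus the $ N $-fold Minkowski sum of $ \supp(\hat{ a }) $ with itself, use $ \vec{ 0 } \in \supp(\hat{ a }) $ to reduce membership to a choice of nonnegative multiplicities over the $ s - 1 $ nonzero generators summing to at most $ N $, and obtain $ \abs{ \mathcal{S}^N } \leq s \binom{N + s - 1}{s - 1} $ by stars and bars; the two symmetric product expansions of the binomial coefficient then give $ (2N+1)^s $ when $ s \leq 2N+1 $ and $ s^{2N+1} $ otherwise, and all the algebra checks out. Your route is shorter, achieves the bound with constant $ 1 $ rather than $ 7 $, and, as you correctly observe, never uses the hypothesis $ \supp(\hat{ a }) = -\supp(\hat{ a }) $. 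What the paper's longer route buys is a numerically tighter intermediate count (signed multiplicities over only $ (s-1)/2 $ generators, i.e.\ lattice points of an $ \ell^1 $-ball in $ \Z^{(s-1)/2} $ rather than a simplex in $ \N_0^{s-1} $) together with the explicit enumeration \eqref{eq:StampFrequencyEnumeration}, which the paper reuses algorithmically to construct the stamping set; if only the final displayed inequality is wanted, your argument suffices.
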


Proposition~\ref{prop:SolutionSupport} gives us a natural way to consider truncations of the solution $ u $ in frequency space.
We will use these truncations to discretize the Galerkin formulation \eqref{eq:GalerkinForm} in Section~\ref{sec:in_the_language_of_canuto_spectral_2006} below.
In order to analyze the error in the resulting spectral method algorithm, we will need quantitative bounds on how the solution decays outside of the frequency sets $ \mathcal{S}^N := \mathcal{S}^N[\hat{ a }](\supp (\hat{ f })) $.
For $ \mathcal{S}^N $ to be finite, we assume in this section that $ \supp \hat{ a } $ and $ \supp \hat{ f } $ are finite.
This assumption will be lifted later via Lemma~\ref{lem:RestrictedSupportRecovery}.

We begin with a technical result regarding the interplay between $ L[\hat{ a }] $ and the supports of vectors that it acts on.

\begin{prop}
	\label{prop:LExpansion}
	For any $ \hat{ v } $ with $ \supp(\hat{ v }) \subset \mathcal{S}^n \setminus \mathcal{S}^{ n - 1 } $, $ \supp(L[\hat{ a }]\hat{ v }) \subset \mathcal{S}^{ n + 1 } \setminus \mathcal{S}^{ n - 2 } $.
\end{prop}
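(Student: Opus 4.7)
The plan is to unpack the definition of $L[\hat{a}]$ given in Section~\ref{sec:galerkin_spectral_methods} and exploit the Minkowski-sum structure that underlies the stamping sets. Recall that
\begin{equation*}
    (L[\hat{a}]\hat{v})_\vec{k} = \sum_{\vec{l} \in \Z^d} (2\pi)^2 (\vec{l} \cdot \vec{k}) \hat{a}_{\vec{k} - \vec{l}} \hat{v}_\vec{l},
\end{equation*}
so that the only summands contributing to entry $\vec{k}$ come from $\vec{l} \in \supp(\hat{v})$ with $\vec{k} - \vec{l} \in \supp(\hat{a})$. The two inclusions $\supp(L[\hat{a}]\hat{v}) \subset \mathcal{S}^{n+1}$ and $\supp(L[\hat{a}]\hat{v}) \cap \mathcal{S}^{n-2} = \emptyset$ will then follow from chasing the indexing carefully.

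For the upper inclusion, I would argue that any $\vec{k}$ with $(L[\hat{a}]\hat{v})_\vec{k} \neq 0$ must lie in $\supp(\hat{v}) + \supp(\hat{a})$. Since $\supp(\hat{v}) \subset \mathcal{S}^n$ by hypothesis, the definition \eqref{eq:StampSet} gives $\supp(\hat{v}) + \supp(\hat{a}) \subset \mathcal{S}^n + \supp(\hat{a}) = \mathcal{S}^{n+1}$, which is exactly the outer stamping-level bound.

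For the lower exclusion, I would take any $\vec{k} \in \mathcal{S}^{n-2}$ and show that every index $\vec{l}$ appearing in the sum defining $(L[\hat{a}]\hat{v})_\vec{k}$ vanishes. The constraint $\vec{k} - \vec{l} \in \supp(\hat{a})$ combined with the symmetry $\supp(\hat{a}) = -\supp(\hat{a})$ (noted in the proof of Proposition~\ref{prop:SolutionSupport}) forces $\vec{l} \in \vec{k} + \supp(\hat{a}) \subset \mathcal{S}^{n-2} + \supp(\hat{a}) = \mathcal{S}^{n-1}$. But $\supp(\hat{v}) \subset \mathcal{S}^n \setminus \mathcal{S}^{n-1}$ is by assumption disjoint from $\mathcal{S}^{n-1}$, so $\hat{v}_\vec{l} = 0$ for every such $\vec{l}$ and the entire sum collapses to zero.

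I do not expect any real obstacle here; the statement is essentially a bookkeeping identity in the set algebra of the Minkowski sums defining $\mathcal{S}^N$. The only mildly delicate point is remembering to invoke the symmetry $\supp(\hat{a}) = -\supp(\hat{a})$ when passing from $\vec{k} - \vec{l} \in \supp(\hat{a})$ to $\vec{l} \in \vec{k} + \supp(\hat{a})$, and handling the edge cases $n \in \{0, 1\}$ by interpreting $\mathcal{S}^{m}$ as $\emptyset$ for $m < 0$ so that both inclusions degenerate correctly.
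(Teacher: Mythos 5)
Your proof is correct, and it reorganizes the argument in a way that differs from the paper's. The paper fixes a frequency $\vec{k}$ in the image's support, writes every contributing $\vec{l} \in \mathcal{S}^n \setminus \mathcal{S}^{n-1}$ via a minimal representation $\vec{l} = \vec{k}_f + \sum_{m=1}^n \vec{k}_a^m$, and then does a three-way case analysis on the extra stamp $\vec{k}_a^* = \vec{k} - \vec{l}$ (it cancels some $\vec{k}_a^m$, it is $\vec{0}$, or it is genuinely new), placing $\vec{k}$ in $\mathcal{S}^{n-1}\setminus\mathcal{S}^{n-2}$, $\mathcal{S}^n\setminus\mathcal{S}^{n-1}$, or $\mathcal{S}^{n+1}\setminus\mathcal{S}^n$ respectively. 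You instead split the claim into the outer inclusion, which is a one-line Minkowski-sum containment, and the inner exclusion, which you prove by showing the sum vanishes identically for $\vec{k} \in \mathcal{S}^{n-2}$: the constraint $\vec{k}-\vec{l} \in \supp(\hat{a})$ together with $\supp(\hat{a}) = -\supp(\hat{a})$ forces $\vec{l} \in \mathcal{S}^{n-2} + \supp(\hat{a}) = \mathcal{S}^{n-1}$, where $\hat{v}$ vanishes. This contrapositive route is arguably cleaner: it avoids reasoning about minimal representations entirely, and it supplies exactly the justification that the paper's cancellation case leaves implicit (namely, why $\vec{k}$ cannot drop all the way into $\mathcal{S}^{n-2}$ when $\vec{k}_a^*$ negates one of the $\vec{k}_a^m$). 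You are also right to flag the two points of care: the symmetry of $\supp(\hat{a})$, which holds here because $a$ is real, and the convention $\mathcal{S}^m = \emptyset$ for $m < 0$, which makes the small-$n$ cases degenerate correctly.
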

\begin{proof}
	For any $ \vec{k} \in \Z^d $, consider
	\begin{align*}
		\left(L[\hat{ a }] \hat{ v }\right)_\vec{k}
			&= \sum_{ \vec{l} \in \Z^d} (2\pi)^2 ( \vec{ l } \cdot \vec{ k }) \hat{ a }_{ \vec{ k } - \vec{ l } } \hat{ v }_\vec{ l }\\
			&= \sum_{ \vec{l} \in (\{ \vec{ k } \} - \supp(\hat{ a })) \cap \supp( \hat{ v })} (2\pi)^2 ( \vec{ l } \cdot \vec{ k }) \hat{ a }_{ \vec{ k } - \vec{ l } } \hat{ v }_\vec{ l }\\
			&= \sum_{ \vec{l} \in (\{ \vec{ k } \} - \supp(\hat{ a })) \cap (\mathcal{S}^n \setminus \mathcal{S}^{ n - 1 })} (2\pi)^2 ( \vec{ l } \cdot \vec{ k }) \hat{ a }_{ \vec{ k } - \vec{ l } } \hat{ v }_\vec{ l }.
	\end{align*}

	This sum is nonempty only if $ \vec{ k } $ is such that there exists $ \vec{ l } \in \mathcal{S}^n \setminus \mathcal{S}^{ n - 1 } $ and $ \vec{ k }_{ a }^* \in \supp(\hat{ a }) $ with $ \vec{ k } = \vec{ l } + \vec{ k }_{ a }^*$.
	By definition of $ \vec{ l } \in \mathcal{S}^n \setminus \mathcal{ S }^{ n - 1 } $, $ n $ is the minimal such number that
	\begin{equation*}
		\vec{ l } = \vec{ k }_{ f } + \sum_{ m = 1 }^n \vec{ k }_{ a }^m, \text{ where } \vec{ k }_{ f } \in \supp(\hat{ f }), \; \vec{ k }_{ a }^m \in \supp(\hat{ a }) \text{ for all } m = 1, \ldots, n
	\end{equation*}
	holds.
	In particular, this implies that $ \vec{ k }_{ a }^m \neq \vec{ 0 } $ for all $ m = 1, \ldots, n $.

	There are now two cases. 
	First, if $ \vec{ k }_{ a }^* = - \vec{ k }_{ a }^m $ for any $ m $, $ \vec{ k } = \vec{ l } + \vec{ k }_{ a }^* \in \mathcal{S}^{ n - 1 } \setminus \mathcal{ S }^{ n - 2 } $, and the proposition is satisfied.
	On the other hand, we consider the case when $ \vec{ k }_{ a }^* $ does not negate any $ \vec{ k }_{ a }^m $ involved in the sum equalling $ \vec{ l } $.
	If $ \vec{ k }_{ a }^* = \vec{ 0 } $, then clearly $ \vec{ k } = \vec{ l } \in \mathcal{S}^n \setminus \mathcal{S}^{ n - 1 } $.
	In any other case, we represent 
	\begin{equation*}
		\vec{ k } = \vec{ k }_{ f } + \sum_{ m = 1 }^n \vec{ k }_{ a }^m + \vec{ k }_{ a }^* =: \vec{ k }_{ f } + \sum_{ m = 1 }^{ n + 1 } \vec{ k }_{ a }^m,
	\end{equation*}
	where $ n + 1 $ is the smallest number for which this holds.
	Thus, $ \vec{ k } \in \mathcal{S}^{ n + 1 } \setminus \mathcal{S}^n $.
	Altogether then, the only possible $ \vec{ k } $ values such that the sum is nonzero are those in $ \mathcal{S}^{ n + 1 } \setminus \mathcal{S}^{ n - 2 } $, completing the proof.
\end{proof}

Noting that $ \supp(L[\hat{ a }] \hat{ u }) = \supp(\hat{ f }) $, we observe the following interesting relationship between the values of $ \hat{ u } $ on neighboring stamping levels.
Below, to simplify notation, for all $ m, n \in \N_0 $, we set
\begin{equation*}
	b_{ m, n } := \langle L[\hat{ a }] \hat{ u }_{ \mathcal{S}^m\setminus \mathcal{S}^{ m - 1 } }, \hat{ u }_{ \mathcal{S}^n \setminus \mathcal{S}^{ n - 1 } } \rangle_{ \ell^2 }, 
\end{equation*}
with the convention  that $ \mathcal{S}^{ -1 } = \emptyset $.

\begin{cor}
	For all $ n \in \N_0 $,
	\begin{equation*}
		b_{ n + 1, n } + b_{ n, n } + b_{ n - 1, n } = 
			\begin{cases}
				\langle \hat{ f }, \hat{ u }\restrict{ \mathcal{S}^0 } \rangle_{ \ell^2 } &\text{ if } n = 0 \\
				0 &\text{ otherwise}.
			\end{cases}
	\end{equation*}
\end{cor}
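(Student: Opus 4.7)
The plan is to read the corollary as the statement obtained by testing the Galerkin equation $L[\hat a]\hat u=\hat f$ against the shell $\hat u\restrict{\mathcal S^n\setminus\mathcal S^{n-1}}$, after first decomposing $\hat u$ itself into shells. Since $\supp(\hat u)\subset\mathcal S^\infty=\bigsqcup_{m\ge 0}(\mathcal S^m\setminus\mathcal S^{m-1})$ by Proposition~\ref{prop:SolutionSupport}, we have the orthogonal ($\ell^2$) decomposition
\begin{equation*}
  \hat u \;=\; \sum_{m=0}^{\infty} \hat u\restrict{\mathcal S^m\setminus\mathcal S^{m-1}},
\end{equation*}
and applying $L[\hat a]$ termwise then pairing with $\hat u\restrict{\mathcal S^n\setminus\mathcal S^{n-1}}$ gives
\begin{equation*}
  \langle \hat f,\hat u\restrict{\mathcal S^n\setminus\mathcal S^{n-1}}\rangle_{\ell^2} \;=\; \sum_{m=0}^{\infty} b_{m,n}.
\end{equation*}

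The key reduction of the infinite sum to the three-term expression $b_{n-1,n}+b_{n,n}+b_{n+1,n}$ comes straight from Proposition~\ref{prop:LExpansion}. That proposition guarantees $\supp(L[\hat a]\hat u\restrict{\mathcal S^m\setminus\mathcal S^{m-1}})\subset\mathcal S^{m+1}\setminus\mathcal S^{m-2}$, i.e., a frequency in the support sits in one of the shells indexed by $m-1$, $m$, or $m+1$. The shells form a disjoint partition, so $\mathcal S^n\setminus\mathcal S^{n-1}$ meets $\mathcal S^{m+1}\setminus\mathcal S^{m-2}$ only when $n\in\{m-1,m,m+1\}$, i.e., $m\in\{n-1,n,n+1\}$. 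Therefore $b_{m,n}=0$ outside this band, and the sum collapses to the three terms on the left-hand side.

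It remains to identify the right-hand side. Since $\supp(\hat f)\subset\mathcal S^0$, for $n\ge 1$ the shell $\mathcal S^n\setminus\mathcal S^{n-1}$ is disjoint from $\mathcal S^0$ (because $\mathcal S^0\subset\mathcal S^{n-1}$), so $\langle \hat f,\hat u\restrict{\mathcal S^n\setminus\mathcal S^{n-1}}\rangle_{\ell^2}=0$. For $n=0$, the convention $\mathcal S^{-1}=\emptyset$ gives $\hat u\restrict{\mathcal S^0\setminus\mathcal S^{-1}}=\hat u\restrict{\mathcal S^0}$, yielding the stated value $\langle \hat f,\hat u\restrict{\mathcal S^0}\rangle_{\ell^2}$.

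I expect no real obstacle in the argument; the only mildly technical point is justifying exchanging $L[\hat a]$ and the $\ell^2$-convergent shell sum, which is immediate because the inner product with $\hat u\restrict{\mathcal S^n\setminus\mathcal S^{n-1}}$ sees only finitely many nonzero terms thanks to Proposition~\ref{prop:LExpansion}. The real content of the corollary is the three-term "tridiagonal" structure, and that is supplied entirely by the shell-expansion lemma.
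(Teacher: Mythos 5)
Your proposal is correct and follows essentially the same route as the paper: both reduce the infinite sum $\sum_m b_{m,n}$ to the three-term band via Proposition~\ref{prop:LExpansion}, identify that sum with $\langle \hat f, \hat u\restrict{\mathcal{S}^n\setminus\mathcal{S}^{n-1}}\rangle_{\ell^2}$ through the Galerkin equation, and evaluate the right-hand side using $\supp(\hat f)\subset\mathcal{S}^0$. The extra remark on interchanging $L[\hat a]$ with the shell sum is a harmless added justification the paper leaves implicit.
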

\begin{proof}
	By Proposition~\ref{prop:LExpansion}, $ \hat{ u }\restrict{ \mathcal{S}^{ n } \setminus \mathcal{S}^{ n - 1 } } $ is $ \ell^2 $-orthogonal to $ L[\hat{ a }] \hat{ u }\restrict{ \mathcal{S}^m \setminus \mathcal{S}^{ m - 1 } } $ for all $ m \notin \{n - 1, n, n + 1\} $.
	In our simplified notation, $ b_{ m, n } = 0 $ for all $ m \notin \{n - 1, n, n + 1\} $.
	Thus
	\begin{equation*}
		\langle \hat{ f }, \hat{ u }\restrict{ \mathcal{S}^n \setminus \mathcal{S}^{ n - 1 } } \rangle_{ \ell^2 }
			= \langle L[\hat{ a }] \hat{ u }, \hat{ u }\restrict{ \mathcal{S}^n \setminus \mathcal{S}^{ n - 1 } } \rangle_{ \ell^2 }
			= \sum_{ m = 0 }^{ \infty } b_{ m, n }
			= b_{ n + 1, n } + b_{ n, n } + b_{ n - 1, n }.
	\end{equation*}
	The proof is finished by noting that
	\begin{equation*}
		\langle \hat{ f }, \hat{ u }\restrict{ \mathcal{S}^n \setminus \mathcal{S}^{ n - 1 } } \rangle_{ \ell^2 }
			=
			\begin{cases}
				\langle \hat{ f }, \hat{ u }\restrict{ \mathcal{S}^0 } \rangle &\text{ if } n = 0 \\
				0 &\text{ otherwise}.
			\end{cases}
	\end{equation*}
\end{proof}

We are now ready to estimate $ \hat{ u }\restrict{ \mathcal{S}^n \setminus \mathcal{S}^{ n - 1 } } $ in terms of its neighbors $ \hat{ u }\restrict{ \mathcal{S}^{ n + 1 } \setminus \mathcal{S}^{ n } } $ and $ \hat{ u }\restrict{ \mathcal{S}^{ n - 1 } \setminus \mathcal{S}^{ n - 2 } } $.
The standard approach would be to use a combination of coercivity and continuity (see, e.g., the proof of Lemma~\ref{lem:StrangsLemma} or \cite[Section~6.4]{canuto_spectral_2006} for other examples): for $ n > 0 $,
\begin{equation*}
	\alpha \norm{ u\restrict{ \mathcal{S}^n \setminus \mathcal{S}^{ n - 1 } } }_{ H }^2 \leq |b_{ n, n }| \leq | b_{ n + 1, n }| + |b_{ n - 1, n }| \leq \beta \norm{ u\restrict{ \mathcal{S}^n \setminus \mathcal{S}^{ n - 1 } } }_{ H } \left( \norm{ u\restrict{ \mathcal{S}^{ n + 1 } \setminus \mathcal{S}^{ n}  } }_{ H }  + \norm{ u\restrict{ \mathcal{S}^{ n - 1 } \setminus \mathcal{S}^{ n - 2 } } }_{ H }\right),
\end{equation*}
and we obtain
\begin{equation*}
	\norm{ u\restrict{ \mathcal{S}^n \setminus \mathcal{S}^{ n - 1 } } }_{ H } \leq  \frac{\beta}{\alpha} \left( \norm{ u\restrict{ \mathcal{S}^{ n + 1 } \setminus \mathcal{S}^{ n}  } }_{ H }  + \norm{ u\restrict{ \mathcal{S}^{ n - 1 } \setminus \mathcal{S}^{ n - 2 } } }_{ H }\right).
\end{equation*}
However, we will hope to iterate this bound, and the fact that $ \beta \geq \alpha $ will not allow for us to show any decay as $ n \rightarrow \infty $.
Thus, we require a slightly subtler estimate than simply using continuity.

\begin{prop}
	\label{prop:RefinedContinuity}
	For $ n > 0 $, we have 
	\begin{equation*}
		|b_{ n \pm 1, n }| \leq \norm{ a - \hat{ a }_\vec{ 0 } }_{ L^\infty } \norm{ u\restrict{ \mathcal{S}^n \setminus \mathcal{S}^{ n - 1 }} }_{ H }\norm{ u\restrict{ \mathcal{S}^{ n \pm 1 } \setminus \mathcal{S}^{ n \pm 1 - 1 }} }_{ H }.
	\end{equation*}
\end{prop}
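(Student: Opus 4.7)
The plan is to exploit the fact that subtracting the mean of $a$ from $a$ in the bilinear form $\mathfrak{L}[a]$ cannot change the value of $b_{n\pm 1,n}$ for $n>0$, because the constant part becomes a multiple of an $H$-inner product between restrictions with disjoint Fourier supports, which vanishes.

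First, I will rewrite $b_{n\pm 1,n}$ as a bilinear form: by Proposition~\ref{prop:GalerkinBilinearForm},
\begin{equation*}
    b_{n\pm 1,n} = \mathfrak{L}[a]\left(u\restrict{\mathcal{S}^{n\pm 1}\setminus\mathcal{S}^{n\pm 1-1}},\; u\restrict{\mathcal{S}^n\setminus\mathcal{S}^{n-1}}\right) = \int_{\T^d} a(\vec{x})\,\nabla u\restrict{\mathcal{S}^{n\pm 1}\setminus\mathcal{S}^{n\pm 1-1}}\cdot\overline{\nabla u\restrict{\mathcal{S}^n\setminus\mathcal{S}^{n-1}}}\,\d\vec{x}.
\end{equation*}
Both restrictions lie in $H$ (they are mean-zero since $n>0$ guarantees $\vec{0}\notin\mathcal{S}^n\setminus\mathcal{S}^{n-1}$, and similarly for $n\pm 1$ after noting the $n=1$, indexing $n-1=0$ case uses $\supp(\hat{f})$ which, by Proposition~\ref{prop:ExistenceUniquenessStability}, satisfies $\hat{f}_{\vec{0}}=0$).

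Next, I split $a = \hat{a}_{\vec{0}} + (a - \hat{a}_{\vec{0}})$. The contribution of the constant term is
\begin{equation*}
    \hat{a}_{\vec{0}}\int_{\T^d} \nabla u\restrict{\mathcal{S}^{n\pm 1}\setminus\mathcal{S}^{n\pm 1-1}}\cdot\overline{\nabla u\restrict{\mathcal{S}^n\setminus\mathcal{S}^{n-1}}}\,\d\vec{x} = \hat{a}_{\vec{0}}\,\langle u\restrict{\mathcal{S}^{n\pm 1}\setminus\mathcal{S}^{n\pm 1-1}},\, u\restrict{\mathcal{S}^n\setminus\mathcal{S}^{n-1}}\rangle_{H},
\end{equation*}
and this vanishes because the two restrictions have disjoint Fourier supports (the sets $\mathcal{S}^{n\pm 1}\setminus\mathcal{S}^{n\pm 1-1}$ and $\mathcal{S}^n\setminus\mathcal{S}^{n-1}$ are disjoint by construction), so by Plancherel their $H$-inner product, which weights Fourier coefficients by $(2\pi)^2\|\vec{k}\|^2$, is zero term by term.

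Finally, the remaining integral is bounded by Hölder's inequality followed by Cauchy--Schwarz:
\begin{equation*}
    |b_{n\pm 1,n}| = \left|\int_{\T^d}(a - \hat{a}_{\vec{0}})\,\nabla u\restrict{\mathcal{S}^{n\pm 1}\setminus\mathcal{S}^{n\pm 1-1}}\cdot\overline{\nabla u\restrict{\mathcal{S}^n\setminus\mathcal{S}^{n-1}}}\,\d\vec{x}\right| \leq \|a - \hat{a}_{\vec{0}}\|_{L^\infty}\,\|\nabla u\restrict{\mathcal{S}^{n\pm 1}\setminus\mathcal{S}^{n\pm 1-1}}\|_{L^2}\,\|\nabla u\restrict{\mathcal{S}^n\setminus\mathcal{S}^{n-1}}\|_{L^2},
\end{equation*}
and the $L^2$ norms of the gradients coincide with the $H$-norms of the restrictions by the definition of $\langle\cdot,\cdot\rangle_H$, yielding the claim. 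There is no real obstacle beyond the conceptual trick of noticing that the constant mode of $a$ decouples the stamping levels; the continuity estimate $|b_{n\pm 1,n}|\leq \beta\,\|u\restrict{\cdots}\|_H\|u\restrict{\cdots}\|_H$ used in the naïve approach is simply replaced by the refined constant $\|a-\hat{a}_{\vec{0}}\|_{L^\infty}$, which can be made much smaller than $\beta$ and, crucially, may be smaller than $\alpha$, enabling the geometric-type iteration the authors set up in the preceding discussion.
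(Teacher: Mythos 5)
Your proof is correct and takes essentially the same route as the paper's: both hinge on the observation that the constant mode $\hat{a}_{\vec{0}}$ of $a$ cannot couple the disjoint shells $\mathcal{S}^{n\pm 1}\setminus\mathcal{S}^{n\pm 1-1}$ and $\mathcal{S}^{n}\setminus\mathcal{S}^{n-1}$, followed by the standard continuity estimate applied with $a$ replaced by $a-\hat{a}_{\vec{0}}$. The only difference is cosmetic --- the paper notes that the index $\vec{l}=\vec{k}$ (hence the coefficient $\hat{a}_{\vec{0}}$) never appears in the $\ell^2$ double sum, while you add and subtract the mean and kill its contribution via orthogonality of the $H$-inner product on disjoint Fourier supports, which is the same fact stated in dual language.
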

\begin{proof}
	Restricting all sums to the support of the vectors they index, we have
	\begin{equation*}
		b_{ n \pm 1, n } = \sum_{ \vec{ k } \in \mathcal{S}^{ n } \setminus \mathcal{S}^{ n - 1 } } \sum_{ \vec{ l } \in (\vec{ k } - \supp(\hat{ a })) \cap (\mathcal{S}^{ n \pm 1 } \setminus \mathcal{S}^{ n \pm 1 - 1 }) } (2 \pi)^2 (\vec{ l } \cdot \vec{ k }) \hat{ a }_{ \vec{ k } - \vec{ l } } \hat{ u }_{ \vec{ l } } \overline{\hat{ u }}_{ \vec{ k } }.
	\end{equation*}
	Clearly, choosing $ \vec{ l } = \vec{ k } \in \mathcal{S}^{ n } \setminus \mathcal{S}^{ n - 1  } $ would not allow for $ \vec{ l } \in \mathcal{S}^{ n \pm 1 } \setminus \mathcal{S}^{ n \pm 1 - 1 } $.
	Thus, no term multiplying $ \hat{ a }_{ \vec{ k } - \vec{ k } } = \hat{ a }_{ \vec{ 0 } } $ will appear in this sum.
	We then have the equivalence
	\begin{equation*}
		b_{ n \pm 1, n } = \langle L[\hat{ a } - \hat{ a }_\vec{ 0 }] \hat{ u }\restrict{ \mathcal{S}^{ n \pm 1 } \setminus \mathcal{S}^{n \pm 1 - 1} }, \hat{ u }\restrict{ \mathcal{S}^n \setminus \mathcal{S}^{ n - 1 } } \rangle_{ \ell^2 },
	\end{equation*}
	which by the standard argument for continuity, implies
	\begin{equation*}
		|b_{ n \pm 1, n }| \leq \norm{ a - \hat{ a }_\vec{ 0 } }_{ L^\infty } \norm{ u\restrict{ \mathcal{S}^n \setminus \mathcal{S}^{ n - 1 }} }_{ H }\norm{ u\restrict{ \mathcal{S}^{ n \pm 1 } \setminus \mathcal{S}^{ n \pm 1 - 1 }} }_{ H }.
	\end{equation*}
	as desired.
\end{proof}

The same argument preceding Proposition~\ref{prop:RefinedContinuity} then gives the desired ``neighbor'' estimate.
\begin{cor}
	\label{cor:NeighborBound}
	For all $ n > 1 $,
	\begin{equation*}
	\norm{ u\restrict{ \mathcal{S}^n \setminus \mathcal{S}^{ n - 1 } } }_{ H } \leq  \frac{\norm{ a - \hat{ a }_\vec{ 0 } }_{ L^\infty }}{a_\mathrm{min}} \left( \norm{ u\restrict{ \mathcal{S}^{ n + 1 } \setminus \mathcal{S}^{ n}  } }_{ H }  + \norm{ u\restrict{ \mathcal{S}^{ n - 1 } \setminus \mathcal{S}^{ n - 2 } } }_{ H }\right).
	\end{equation*}
\end{cor}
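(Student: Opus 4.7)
The strategy is to repeat verbatim the chain of inequalities displayed just before Proposition~\ref{prop:RefinedContinuity}, but with two refinements: the crude continuity constant $\beta$ is replaced by the sharper constant $\norm{a - \hat{a}_\vec{0}}_{L^\infty}$ from Proposition~\ref{prop:RefinedContinuity}, and the abstract coercivity constant $\alpha$ is replaced by its explicit lower bound $a_\mathrm{min}$ from Proposition~\ref{prop:ExistenceUniquenessStability}.

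Concretely, I would first apply Proposition~\ref{prop:GalerkinBilinearForm} to rewrite
\begin{equation*}
    b_{n,n} = \mathfrak{L}[a]\!\left(u\restrict{\mathcal{S}^n \setminus \mathcal{S}^{n-1}},\, u\restrict{\mathcal{S}^n \setminus \mathcal{S}^{n-1}}\right).
\end{equation*}
Because $u$ is mean-zero and $a, f, u$ are real-valued (so that $\supp(\hat{u}) = -\supp(\hat{u})$ and all $\mathcal{S}^n$ are symmetric about the origin), the restriction $u\restrict{\mathcal{S}^n \setminus \mathcal{S}^{n-1}}$ is itself a real, mean-zero element of $H$, so coercivity yields $a_\mathrm{min} \norm{u\restrict{\mathcal{S}^n \setminus \mathcal{S}^{n-1}}}_H^2 \leq |b_{n,n}|$. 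Next, I would invoke the preceding corollary, which for $n > 0$ asserts $b_{n+1,n} + b_{n,n} + b_{n-1,n} = 0$, and apply the triangle inequality to obtain $|b_{n,n}| \leq |b_{n+1,n}| + |b_{n-1,n}|$. Finally, Proposition~\ref{prop:RefinedContinuity} bounds each off-diagonal term $|b_{n\pm 1, n}|$ by $\norm{a - \hat{a}_\vec{0}}_{L^\infty} \norm{u\restrict{\mathcal{S}^n \setminus \mathcal{S}^{n-1}}}_H \norm{u\restrict{\mathcal{S}^{n\pm 1} \setminus \mathcal{S}^{n \pm 1 - 1}}}_H$, and dividing through by the common factor $\norm{u\restrict{\mathcal{S}^n \setminus \mathcal{S}^{n-1}}}_H$ (with the trivial case of this quantity vanishing handled separately) produces the claimed inequality.

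There is essentially no obstacle left: all the substantive work has already been carried out in Propositions~\ref{prop:LExpansion} and \ref{prop:RefinedContinuity}, which together establish the tridiagonal block structure of $L[\hat{a}]$ with respect to the partition $\{\mathcal{S}^n \setminus \mathcal{S}^{n-1}\}_n$ and strip out the $\hat{a}_\vec{0}$ contribution from the off-diagonal blocks. The restriction $n > 1$ in the statement simply guarantees that $\mathcal{S}^{n-2}$ appears in its literal (nonempty) form, so that the argument proceeds without needing the boundary convention $\mathcal{S}^{-1} = \emptyset$. The payoff of the refinement is conceptual rather than technical: the resulting ratio $\norm{a - \hat{a}_\vec{0}}_{L^\infty}/a_\mathrm{min}$ can be strictly less than $1$ even when $\beta/\alpha \geq 1$, which is exactly what will be needed later to iterate this neighbor estimate into geometric decay of $\norm{u\restrict{\mathcal{S}^n \setminus \mathcal{S}^{n-1}}}_H$ as $n \to \infty$.
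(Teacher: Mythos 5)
Your proposal is correct and follows exactly the route the paper intends: the paper's own "proof" is the one-line remark that the argument displayed just before Proposition~\ref{prop:RefinedContinuity} goes through with coercivity giving $a_\mathrm{min}$ in place of $\alpha$ and Proposition~\ref{prop:RefinedContinuity} giving $\norm{a - \hat{a}_{\vec{0}}}_{L^\infty}$ in place of $\beta$, which is precisely what you carry out (including the sensible handling of the degenerate case where the common factor vanishes).
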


We now have the pieces to state an estimate of the truncation error.
\begin{lem}
	\label{lem:StampDecay}
	Let $ a $, $ f $, and $ u $ be as in Proposition~\ref{prop:ExistenceUniquenessStability}.
	Assume
	\begin{equation}
		\label{eq:aCondition}
		3 \norm{ a - \hat{ a }_\vec{ 0 } }_{ L^\infty } < a_\mathrm{min}
	\end{equation}
	Then
	\begin{equation*}
		\norm{ u - u\restrict{ \mathcal{S}^N}}_{ H } \leq \left( \frac{ \norm{ a - \hat{ a }_\vec{ 0 } }_{ L^\infty } }{a_\mathrm{min} - 2 \norm{ a - \hat{ a }_\vec{ 0 } }_{ L^\infty }} \right)^{ N + 1 } \frac{\norm{ f }_{ L^2 }}{a_\mathrm{min}}.
	\end{equation*}
\end{lem}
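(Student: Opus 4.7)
The plan is to convert the neighbor recursion of Corollary~\ref{cor:NeighborBound} into geometric decay of the slice norms $\rho_n := \norm{u\restrict{\mathcal{S}^n \setminus \mathcal{S}^{n-1}}}_H$ at a rate $q := c/(1-2c)$, where $c := \norm{a - \hat{a}_{\vec{0}}}_{L^\infty}/a_\mathrm{min}$; the hypothesis \eqref{eq:aCondition} is precisely what ensures $3c < 1$ and hence $q < 1$. Adopting the convention $\mathcal{S}^{-1} = \emptyset$ so that $\rho_0 = \norm{u\restrict{\mathcal{S}^0}}_H$, the slices $\mathcal{S}^n \setminus \mathcal{S}^{n-1}$ are pairwise disjoint in frequency, so the corresponding pieces of $u$ are $H$-orthogonal and
$$\norm{u - u\restrict{\mathcal{S}^N}}_H^2 = \sum_{n \geq N+1} \rho_n^2.$$

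Corollary~\ref{cor:NeighborBound} (which in fact holds for all $n \geq 1$: Proposition~\ref{prop:RefinedContinuity} already applies at $n = 1$, and the triangle-style argument preceding it goes through unchanged with $\mathcal{S}^{-1} = \emptyset$) delivers the recursion $\rho_n \leq c(\rho_{n+1} + \rho_{n-1})$ for every $n \geq 1$. The central step I propose is to sum this over $n \geq N+1$: writing $S_{N+1} := \sum_{n \geq N+1} \rho_n$, a straightforward reindexing of the right-hand side yields
$$S_{N+1} \leq c\bigl(S_{N+2} + S_N\bigr) = 2c\, S_{N+1} + c(\rho_N - \rho_{N+1}),$$
whence $(1 - 2c)\, S_{N+1} \leq c\, \rho_N$, i.e., $S_{N+1} \leq q\, \rho_N$. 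Since $\rho_{N+1} \leq S_{N+1}$, this gives the contraction $\rho_{N+1} \leq q\, \rho_N$, and induction yields $\rho_n \leq q^n \rho_0$ for all $n \geq 0$.

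Combining the elementary bound $\norm{\cdot}_{\ell^2} \leq \norm{\cdot}_{\ell^1}$ on the tail sequence with the stability estimate \eqref{eq:StabilityEstimate}, which gives $\rho_0 \leq \norm{u}_H \leq \norm{f}_{L^2}/a_\mathrm{min}$, I conclude
$$\norm{u - u\restrict{\mathcal{S}^N}}_H \leq S_{N+1} \leq q\, \rho_N \leq q^{N+1}\, \rho_0 \leq q^{N+1}\, \frac{\norm{f}_{L^2}}{a_\mathrm{min}},$$
which is the desired inequality. The main technical subtlety is justifying the infinite summation when $\{\rho_n\}$ is not a priori known to lie in $\ell^1$; I would circumvent this by first carrying out the manipulation on a finite truncation $\sum_{n = N+1}^{M} \rho_n$, which introduces a harmless extra boundary term $c\, \rho_{M+1}$ that vanishes as $M \to \infty$ because $\sum \rho_n^2 = \norm{u}_H^2 < \infty$ forces $\rho_n \to 0$. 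Everything else is direct bookkeeping on top of the already-established neighbor bound and stability estimate.
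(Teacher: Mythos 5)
Your proposal is correct and follows essentially the same route as the paper: the paper defines $T_N := \sum_{n \geq N+1} \norm{u\restrict{\mathcal{S}^n \setminus \mathcal{S}^{n-1}}}_H$ (your $S_{N+1}$), sums the neighbor bound to get $T_N \leq A(T_{N+1}+T_{N-1}) = 2AT_N + A(\rho_N - \rho_{N+1})$, rearranges to $T_N \leq \frac{A}{1-2A}\rho_N$, iterates against $\rho_N \leq T_{N-1}$, and closes with the stability estimate. Your two additional touches --- verifying the neighbor bound at $n=1$ and justifying the rearrangement of the possibly infinite sum via finite truncation --- are welcome patches of details the paper glosses over, but they do not change the argument.
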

\begin{proof}
	We begin by breaking $ \supp(\hat{ u }) \setminus \mathcal{S}^N $ into sets of new contributions $ \bigcup_{ n = N + 1 }^\infty \left(\mathcal{S}^{ n } \setminus \mathcal{S}^{ n - 1 }\right) $ (which holds due to Proposition~\ref{prop:SolutionSupport}).
	Thus
	\begin{equation*}
		\norm{ u - u\restrict{ \mathcal{S}^N } }_{ H } \leq \sum_{ n = N + 1 }^\infty \norm{ u\restrict{ \mathcal{S}^n \setminus \mathcal{S}^{ n - 1 } } }_{ H } =: T_N.
	\end{equation*}
	Applying the neighbor bound, Corollary~\ref{cor:NeighborBound}, (where we define $ A := \norm{ a - \hat{ a }_\vec{ 0 } }_{ L^\infty } / a_\mathrm{min} $), we have
	\begin{align*}
		T_N 
			&\leq A \left( \sum_{ n = N + 1 }^{ \infty }\norm{ u\restrict{ \mathcal{S}^{ n + 1 } \setminus \mathcal{S}^n } }_{ H } + \sum_{ n = N + 1 }^\infty \norm{ u\restrict{ \mathcal{S}^{ n - 1 } \setminus \mathcal{S}^{ n - 2 } } }_{ H } \right)\\
			&= A \left( T_{ N + 1 } + T_{ N - 1 } \right) \\
			&= 2 A T_N  + A\left( \norm{ u\restrict{ \mathcal{S}^{ N } \setminus \mathcal{S}^{N - 1} } }_{ H } - \norm{ u\restrict{ \mathcal{S}^{ N + 1 } \setminus \mathcal{S}^{ N } } }_{ H } \right).
	\end{align*}
	After rearranging, and ignoring the negative term, we find
	\begin{equation}
		\label{eq:SumByPiece}
			T_N \leq \frac{A}{1 - 2A} \norm{ u\restrict{ \mathcal{S}^N \setminus \mathcal{S}^{ N - 1 } } }_{ H }.
	\end{equation}
	Noting that we always have
	\begin{equation}
		\label{eq:PieceBySum}
		\norm{ u\restrict{ \mathcal{S}^N \setminus \mathcal{S}^{ N - 1 } } }_{ H } \leq T_{ N - 1 },
	\end{equation}
	iterating \eqref{eq:SumByPiece} and \eqref{eq:PieceBySum} in turn gives
	\begin{equation*}
		\norm{ u - u\restrict{ \mathcal{S}^N } }_{ H } \leq T_N \leq \left( \frac{A}{1 - 2A}  \right)^{ N + 1 } \norm{ u\restrict{ \mathcal{S}^0 } }_{ H } \leq \left( \frac{A}{1 - 2A}  \right)^{ N + 1 } \frac{\norm{ f }_{ L^2 }}{a_\mathrm{min}}.
	\end{equation*}
\end{proof}

\section{Previous results on SFTs}\label{sec:previous_results}

In \cite{gross_sparse_2021}, two methods for high-dimensional SFTs are presented, each with a deterministic and Monte Carlo variant.
Here, we use the faster of the two algorithms (at the cost of slightly suboptimal error guarantees).
We focus on only the Monte Carlo variant as the improvements to this technique described in Section~\ref{sec:improvements_with_randomized_lattices} below use an additional layer of randomization.

This method relies on applying one-dimensional SFTs to samples of a high-dimensional function along special sets called \emph{reconstructing rank-1 lattices}.

\begin{defn}
	Given a number of sampling points $ M \in \N $ and a generating vector $ \vec{ z } \in \{1, \ldots M - 1 \}^d $, we define the \emph{rank-1 lattice} $ \Lambda(\vec{ z }, M) $ as the set
	\begin{equation*}
		\Lambda(\vec{ z }, M) := \left\{ \frac{j}{M} \vec{z} \bmod \vec{ 1 } \mid j \in \{0, \ldots, M - 1 \} \right\} \subset \T^d.
	\end{equation*}
	Additionally, given a set of frequencies $ \mathcal{I} \subset \Z^d $, we say that $ \Lambda(\vec{ z }, M) $ is a \emph{reconstructing rank-1 lattice for} $ \mathcal{I} $ if
	\begin{equation*}
		\vec{ l } \cdot \vec{ z } \not \equiv \vec{ k } \cdot \vec{ z } \bmod M \quad \text{for all } \vec{ l } \neq \vec{ k } \in \mathcal{I}.
	\end{equation*}
\end{defn}

The fundamental idea of a reconstructing rank-1 lattice is that it takes a multivariate function $ g: \T^d \rightarrow \R $ and gives the locations for $ M $ equispaced samples of the univariate function $ t \mapsto g(t \vec{ z }) $.
The univariate Fourier content of these samples can then be assigned to the original function $ g $ with the reconstructing property ensuring that no multidimensional frequencies of interest are aliased together in the one-dimensional analysis.  
For the following theorem, we assume that we know a reconstructing rank-1 lattice exists for a given frequency set of interest, $ \mathcal{I} $.
This assumption will be lifted in the following section.

The following theorem is a restatement of \cite[Corollary 2]{gross_sparse_2021} with minor simplifications and improvements (most notably, $ L^\infty $ error bounds).
The proof of these improvements is given in Appendix~\ref{sec:proof_of_sft_recovery_guarantees}.

\begin{thm}[\cite{gross_sparse_2021}, Corollary 2]
	\label{thm:SFTRecovery}
	Let $ \mathcal{I} \subset \Z^d$ be a frequency set of interest with expansion defined as $ K := \max_{ j \in \{1, \ldots, d\} } (\max_{ \vec{ k } \in \mathcal{I} } k_j - \min_{ \vec{ l } \in \mathcal{I}} l_j ) $ (i.e., the sidelength of the smallest hypercube containing $ \mathcal{I} $), and $ \Lambda( \vec{ z }, M) $ be a reconstructing rank-1 lattice for $ \mathcal{I} $.

	There exists a fast, randomized SFT which, given $ \Lambda(\vec{ z }, M) $, sampling access to $ g \in L^2 $, and a failure probability $ \sigma \in (0, 1] $, will produce a $ 2s $-sparse approximation $ \hat{ \vec{ g } }^s $ of $ \hat g $ and function $ g^s := \sum_{ \vec{ k } \in \supp( \hat{ \vec{ g }}^s) } \hat g_{ \vec{ k } }^s e_\vec{ k } $ approximating $ g $ satisfying
	\begin{align*}
		\norm{ g - g^s }_{ L^2 } \leq \norm{ \hat g - \hat{ \vec{ g } }^s }_{ \ell^2 } 
			&\leq (25 + 3K) \left[ \frac{\norm{\hat{ g }\restrict{ \mathcal{I} } - (\hat{ g }\restrict{ \mathcal{I} })_s^\mathrm{opt}}_1}{\sqrt{ s }} + \sqrt{ s } \norm{ \hat{ g } - \hat{ g }\restrict{ \mathcal{I} } }_1 \right]
	\end{align*}
	with probability exceeding $ 1 - \sigma $.
	If $ g \in L^\infty $, then we additionally have
	\begin{equation*}
		\norm{ g - g^s }_{ L^\infty } \leq \norm{ \hat g - \hat{ \vec{ g } }^s }_{ \ell^1 } \leq (33 + 4K) \left[ \norm{ \hat{ g } \restrict{ \mathcal{I} } - (\hat{ g }\restrict{ \mathcal{I} })_s^\mathrm{opt} }_1 + \norm{ \hat{ g } - \hat{ g }\restrict{ \mathcal{I} } }_1 \right]
	\end{equation*}
	with the same probability estimate.
	The total number of samples of $ g $ and computational complexity of the algorithm can be bounded above by
	\begin{equation*}
		\mathcal{O} \left( d s \log^3( d K M) \log \left( \frac{d K M }{\sigma} \right)  \right).
	\end{equation*}
\end{thm}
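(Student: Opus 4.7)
The plan is to retrace the argument underlying \cite[Corollary~2]{gross_sparse_2021} and to supplement its $\ell^2$ guarantee with a new $\ell^1$ estimate from which the $L^\infty$ bound follows. The proof decomposes into three stages: reduction of the multidimensional SFT problem to a one-dimensional one via the lattice, application of an $\ell^1$-instance-optimal univariate SFT, and lift-back to the multivariate setting.

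First I would exploit that a reconstructing rank-1 lattice makes the map $\vec{k} \mapsto \vec{k}\cdot\vec{z} \bmod M$ injective on $\mathcal{I}$. Sampling $g$ at the $M$ lattice points is then equivalent to uniformly sampling the univariate periodic function $\tilde g(t) := g(t\vec{z})$, whose length-$M$ discrete Fourier transform yields, for each $\vec{k}\in\mathcal{I}$, an aliased coefficient equal to $\hat g_\vec{k}$ plus a sum over frequencies outside $\mathcal{I}$ that collide with $\vec{k}$ modulo $M$. The resulting per-bin aliasing error is pointwise bounded by $\norm{\hat g - \hat g\restrict{\mathcal{I}}}_1$, which controls the tail contribution in both $\ell^2$ and $\ell^1$ senses.

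Second, I would apply the fast univariate SFT internal to \cite{gross_sparse_2021} to this effective length-$M$ signal. The $\ell^2$ bound can be taken verbatim from the existing analysis. For the new $L^\infty$ guarantee I would instead invoke an $\ell^1/\ell^1$ robust-recovery estimate for the underlying routine: with probability at least $1-\sigma$, every heavy univariate coefficient is identified and estimated accurately enough that the total $\ell^1$ error is a constant multiple of the univariate best $s$-term $\ell^1$-approximation error plus the aliasing contribution. The lift-back step uses the bijection between $\mathcal{I}$ and its image in $\Z/M\Z$ to reassemble a $2s$-sparse multivariate approximation, then applies $\norm{g - g^s}_{L^2} \leq \norm{\hat g - \hat{\vec g}^s}_{\ell^2}$ and $\norm{g - g^s}_{L^\infty} \leq \norm{\hat g - \hat{\vec g}^s}_{\ell^1}$. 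The dependence on the expansion $K$ enters through the cost of resolving a recovered mod-$M$ frequency to its unique preimage in $\mathcal{I}$, and the overall sample and runtime bounds follow by multiplying the univariate SFT cost by this per-lookup overhead.

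The main technical obstacle is the $\ell^1$ instance-optimality for the univariate SFT. The existing analysis in \cite{gross_sparse_2021} is expressed in the mixed $\ell^2/\ell^1$ norm standard in compressive sensing, and converting to a purely $\ell^1/\ell^1$ bound requires tracking how the per-bin amplitude errors on identified heavy frequencies aggregate additively rather than in quadrature. This produces the slightly worse constant $(33 + 4K)$ compared with $(25 + 3K)$ in the $\ell^2$ setting, reflecting the loss from replacing a Cauchy--Schwarz step by a direct triangle inequality step. Once this $\ell^1$ version of the univariate guarantee is secured, the remainder of the proof is essentially bookkeeping parallel to the original argument.
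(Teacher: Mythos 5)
Your proposal matches the paper's proof in essence: the paper likewise takes the $\ell^2$ bound directly from \cite[Corollary~2]{gross_sparse_2021} (modulo dropping the noise term and converting the tail from an $\ell^2$ to a $\sqrt{s}$-weighted $\ell^1$ quantity), and obtains the new $\ell^1$/$L^\infty$ guarantee by re-running the cited paper's lemmas with every $\ell^2$ norm replaced by an $\ell^1$ norm, so that quadrature aggregation becomes additive ($\sqrt{2s}\to 2s$) and the constant degrades from $25+3K$ to $33+4K$ --- exactly the mechanism you identify as the main obstacle. The only minor quibbles are that the $L^2$ bound is not quite ``verbatim,'' and that the $K$-dependence in the error constants arises inside the cited frequency-identification estimates rather than as a preimage-lookup overhead; neither affects the substance of the argument.
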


\section{Improvements with randomized lattices}\label{sec:improvements_with_randomized_lattices}

To use the previous SFT algorithm, we need to know a reconstructing rank-1 lattice in advance.
Though there are deterministic algorithms to construct a reconstructing rank-1 lattice given any frequency set $ \mathcal{I} $ (for example, the component-by-component construction \cite{plonka_numerical_2018, kuo_function_2021}), these algorithms are are superlinear in $ | \mathcal{I} | $ as they effectively search the frequency space for collisions throughout construction.

This section presents an alternative based on choosing a random lattice.
This lattice is chosen by drawing $ \vec{ z } $ from a uniform distribution over $ \{1, \ldots, M - 1\}^d $  for $ M $  sufficiently large.
Below, we provide probability estimates for when this lattice is reconstructing for a frequency set $ \mathcal{I} $.

\begin{lem}
	\label{lem:RandomRank1Lattice}
	Let $ K := \max_{ j \in \{1, \ldots d\} } (\max_{ \vec{ k } \in \mathcal{I}} k_j - \min_{ \vec{ l } \in \mathcal{I} } l_j)$ be the expansion of the frequency set $ \mathcal{I} \subset \Z^d $.
	Let $ \sigma \in (0, 1] $, and fix $ M $ to be the smallest prime greater than $ \max(K, \frac{|\mathcal{I}|^2}{\sigma})  $.
	Then drawing each component of $ \vec{ z } $ i.i.d from $ \{1, \ldots M-1\} $ gives that $ \Lambda(\vec{ z }, M) $ is a reconstructing rank-1 lattice for $ \mathcal{I} $ with probability $ 1 - \sigma$.
\end{lem}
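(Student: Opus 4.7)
The plan is to convert the reconstructing condition into a statement about a random linear form over $\Z/M\Z$ and then apply a union bound.

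First I would observe that $\Lambda(\vec z, M)$ is reconstructing for $\mathcal I$ if and only if $(\vec k - \vec l) \cdot \vec z \not\equiv 0 \pmod M$ for every pair of distinct $\vec k, \vec l \in \mathcal I$. Fix any such difference vector $\vec d := \vec k - \vec l \ne \vec 0$. Because the expansion of $\mathcal I$ is $K$, we have $\|\vec d\|_\infty \le K < M$, so at least one coordinate $d_j$ is nonzero with $|d_j| < M$. Since $M$ is prime, $d_j$ is invertible modulo $M$.

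Next I would estimate the ``bad'' probability for this fixed $\vec d$. Conditioning on the values of $z_i$ for $i \ne j$, the congruence $\vec d \cdot \vec z \equiv 0 \pmod M$ becomes $d_j z_j \equiv -\sum_{i\ne j} d_i z_i \pmod M$, which has exactly one solution $z_j \in \{0, 1, \ldots, M-1\}$. Since $z_j$ is drawn uniformly from $\{1, \ldots, M-1\}$, the conditional probability that $z_j$ hits this unique residue is at most $1/(M-1)$ (and $0$ if the forced residue happens to be $0$). Taking expectation over the remaining coordinates,
\begin{equation*}
    \P\bigl[(\vec k - \vec l) \cdot \vec z \equiv 0 \pmod M\bigr] \le \frac{1}{M - 1}.
\end{equation*}

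Finally I would union-bound over the at most $\binom{|\mathcal I|}{2} \le |\mathcal I|^2/2$ unordered distinct pairs in $\mathcal I$:
\begin{equation*}
    \P[\Lambda(\vec z, M) \text{ is not reconstructing for } \mathcal I] \le \frac{|\mathcal I|(|\mathcal I| - 1)}{2(M-1)} < \frac{|\mathcal I|^2}{2(M-1)}.
\end{equation*}
The choice of $M$ as the smallest prime exceeding $\max(K, |\mathcal I|^2/\sigma)$ ensures both $M > K$ (used above to invoke primality on each $d_j$) and $M - 1 \ge |\mathcal I|^2/\sigma$, so the failure probability is bounded by $\sigma/2 \le \sigma$, finishing the proof.

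I do not anticipate a real obstacle here: the argument is a standard hashing/union-bound calculation. The only minor subtlety is that $\vec z$ is drawn from $\{1,\ldots,M-1\}^d$ rather than $\{0,\ldots,M-1\}^d$, which costs a harmless $1/(M-1)$ instead of $1/M$ per coordinate; and one should note explicitly that the primality of $M$ combined with $M > K$ is exactly what allows each nonzero coordinate of $\vec d$ to be inverted modulo $M$.
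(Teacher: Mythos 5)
Your proposal is correct and follows essentially the same route as the paper's proof: reduce the reconstructing property to the nonvanishing of $(\vec k - \vec l)\cdot\vec z \bmod M$, invert a nonzero coordinate of the difference (using $M > K$ and primality), bound the per-pair collision probability by $1/(M-1)$, and union-bound over pairs. Your only deviations are cosmetic refinements --- counting unordered pairs to gain a factor of $2$ and noting the forced residue may be $0$ --- which slightly sharpen the paper's bound but do not change the argument.
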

\begin{proof}
	In order to show that $ \Lambda(\vec{ z }, M) $ is reconstructing for $ \mathcal{I} $, it suffices to show that for any $ \vec{ k } \neq \vec{ l } \in \mathcal{I} $, $ \vec{ k } \cdot \vec{ z } \not\equiv \vec{ l } \cdot \vec{ z } \bmod M $.
	Thus, we are interested in showing that $ \P[ \exists \vec{ k } \neq \vec{ l } \in \mathcal{I} \text{ s.t. } (\vec{ k } - \vec{ l }) \cdot \vec{ z } \equiv \vec{ 0 } \bmod M] $ is small.

	If $ \vec{ k }, \vec{ l } \in \mathcal{I} $ are distinct, at least one component $ k_j - l_j $ is nonzero.
	Since $ M > K $, we therefore have that $ k_j - l_j \not\equiv 0 \bmod M $, and since $ M $ is prime, $ k_j - l_j $ has a multiplicative inverse modulo $ M $.
	Then $ \P[(\vec{ k } - \vec{ l }) \cdot \vec{ z }\equiv \vec{ 0 } \bmod M] = \P\left[z_j = \left( (k_j - l_j)^{ -1 } \sum_{ i \in \{1, \ldots d\}, i \neq j } (k_i - l_i)z_i \bmod M \right)\right] $.
	Since $ z_j $ is uniformly distributed in $ \{1, \ldots M - 1\} $, this probability is $ \frac{1}{M - 1} $.
	By the union bound,
	\begin{equation*}
		\P[ \exists \vec{ k } \neq \vec{ l } \in \mathcal{I} \text{ s.t. } (\vec{ k } - \vec{ l }) \cdot \vec{ z } \equiv \vec{ 0 } \bmod M] \leq \sum_{ \vec{ k } \neq \vec{ l } \in \mathcal{I} } \P[(\vec{ k } - \vec{ l }) \cdot \vec{ z } \equiv \vec{ 0 } \bmod M] \leq \frac{\abs{ \mathcal{I} }^2}{M - 1} \leq \sigma
	\end{equation*}
	as desired.

\end{proof}

One important consequence of Lemma~\ref{lem:RandomRank1Lattice} is that we no longer need to provide the frequency set of interest in Theorem~\ref{thm:SFTRecovery}.
Having chosen $ K $, the expansion, and $ s $, the sparsity level, we can always take $ \mathcal{I} $ to be the frequencies corresponding to the largest $ s $ Fourier coefficients of the function $ g $ in the hypercube $ [-K/2, K/2]^d $.
Lemma~\ref{lem:RandomRank1Lattice} then implies that a randomly generated lattice with length $ \max(K, s^2 / \sigma) $ will be reconstructing for these optimal frequencies with probability $ \sigma $.
We summarize this in the following corollary.

\begin{cor}
	\label{cor:SFT}
	For a multivariate function's Fourier series $ \hat{ g } $, define $ \hat{ g }\restrict{ K } := \hat{ g }\restrict{ [-K/2, K/2]^d } $.
	Given a multivariate bandwidth $ K $, a sparsity level $ s $, probability of failure $ \sigma \in (0, 1] $, and sampling access to $ g \in L^2 $, there exists a fast, randomized SFT which will produce a $ 2s $-sparse approximation $ \hat{ \vec{ g } }^s $ of $ \hat{ g } $ and function $ g^s := \sum_{ \vec{ k } \in \supp(\hat{ \vec{ g } }^s) } \hat{ g }_\vec{ k }^s e_\vec{ k } $ approximating $ g $ satisfying
	\begin{equation*}
		\norm{ g - g^s }_{ L^2 } \leq \norm{ \hat{ g } - \hat{ \vec{ g } }^s }_{ \ell^2 } \leq (25 + 3K) \sqrt{ s } \norm{ \hat{ g } - (\hat{ g }\restrict{ K })_s^\mathrm{opt} }_{ \ell^1 }
	\end{equation*}
	with probability $ 1 - \sigma $.
	If $ g \in L^\infty $, then $ g^s $ and $ \hat{ g }^s $ satisfy the upper bound
	\begin{equation*}
		\norm{ g - g^s }_{ L^\infty } \leq \norm{ \hat{ g } - \hat{ \vec{ g } }^s }_{ \ell^1 } \leq (33 + 4K) \norm{ \hat{ g } - (\hat{ g }\restrict{ K })_s^\mathrm{ opt} }_{ \ell^1 }
	\end{equation*}
	with the same probability estimate.
	The total number of samples of $ g $ and computational complexity of the algorithm can be bounded above by
	\begin{equation*}
	\mathcal{O}\left(d s \log^3(d K \max(K, s / \sigma)) \log \left( \frac{d K \max(K, s / \sigma)}{\sigma}  \right)\right).
	\end{equation*}
	If we fix $ \sigma $ (say $ \sigma = 0.95 $), this reduces to a complexity of
	\begin{equation*}
		\mathcal{O}\left(d s \log^4(d K \max(K, s))\right).
	\end{equation*}
\end{cor}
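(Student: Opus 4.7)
The plan is to treat this as a corollary in the literal sense: chain together Lemma~\ref{lem:RandomRank1Lattice} and Theorem~\ref{thm:SFTRecovery} by choosing the frequency set $\mathcal{I}$ optimally, then apply a union bound and simplify. The payoff for making $\mathcal{I}$ the support of $(\hat{g}\restrict{K})_s^\mathrm{opt}$ is that the two terms in the error bounds of Theorem~\ref{thm:SFTRecovery} collapse into one.

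First I would set $\mathcal{I} := \supp\bigl((\hat{g}\restrict{K})_s^\mathrm{opt}\bigr) \subset [-K/2, K/2]^d$, so that $|\mathcal{I}| \leq s$ and the expansion of $\mathcal{I}$ (in the sense of Theorem~\ref{thm:SFTRecovery}) is at most $K$. The crucial observation about this choice is that $\hat{g}\restrict{\mathcal{I}}$ is already itself its own best $s$-term approximation, so $\hat{g}\restrict{\mathcal{I}} - (\hat{g}\restrict{\mathcal{I}})_s^\mathrm{opt} = 0$. Consequently, in the $L^2$ bound of Theorem~\ref{thm:SFTRecovery}, only the tail term $\sqrt{s}\,\norm{\hat{g} - \hat{g}\restrict{\mathcal{I}}}_1 = \sqrt{s}\,\norm{\hat{g} - (\hat{g}\restrict{K})_s^\mathrm{opt}}_{\ell^1}$ survives, and similarly for the $L^\infty$ bound. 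This matches the error estimates claimed in the corollary.

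Next I would handle the randomness. Apply Lemma~\ref{lem:RandomRank1Lattice} with target failure probability $\sigma/2$ and the set $\mathcal{I}$; since $|\mathcal{I}|^2 \leq s^2$, this produces a prime $M$ of order $\max(K, s^2/\sigma)$ such that a uniformly drawn generating vector yields a reconstructing lattice for $\mathcal{I}$ with probability at least $1-\sigma/2$. On that event, Theorem~\ref{thm:SFTRecovery} (invoked with failure probability $\sigma/2$) produces the desired $2s$-sparse approximation $\hat{\vec{g}}^s$ with probability at least $1-\sigma/2$. A union bound then gives total failure probability at most $\sigma$.

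Finally, for the complexity, I would substitute $M = \mathcal{O}(\max(K, s^2/\sigma))$ into the sample/runtime bound from Theorem~\ref{thm:SFTRecovery}, yielding $\mathcal{O}\bigl(d s \log^3(dKM) \log(dKM/\sigma)\bigr)$, and then simplify by absorbing constants. The constant-$\sigma$ simplification then follows by dropping the $1/\sigma$ factors into the logarithm. The only mildly subtle point I anticipate is bookkeeping in the $L^\infty$ bound, where I need $g \in L^\infty$ (so that the summability $\hat{g} \in \ell^1$ needed to make the right-hand side finite is implicit) and need to carry the constant $(33+4K)$ rather than $(25+3K)$ from the $L^2$ case. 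There is no deep obstacle here; the proof is essentially assembly, and the main effort is ensuring the parameters $M$, $\sigma/2$, and the sparsity level $2s$ are tracked consistently through both the reconstructing-lattice guarantee and the SFT recovery guarantee.
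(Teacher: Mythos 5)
Your proposal is correct and matches the paper's (implicitly given) argument: the paper likewise takes $\mathcal{I} = \supp\bigl((\hat{g}\restrict{K})_s^\mathrm{opt}\bigr)$ so that the first error term in Theorem~\ref{thm:SFTRecovery} vanishes and the tail term becomes $\norm{\hat{g} - (\hat{g}\restrict{K})_s^\mathrm{opt}}_{\ell^1}$, then invokes Lemma~\ref{lem:RandomRank1Lattice} with $M$ the smallest prime exceeding $\max(K, s^2/\sigma)$ and substitutes into the complexity bound. Your explicit $\sigma/2$ split with a union bound is in fact slightly more careful than the paper's presentation and is absorbed into the same asymptotic statement.
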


\section{A sparse spectral method via SFTs}\label{sec:in_the_language_of_canuto_spectral_2006}

Let $ \hat{ \vec{ a } }^s $ and $ \hat{ \vec{ f } }^s $ be $ s $-sparse approximations of $ \hat{ a } $ and $ \hat{ f } $ respectively.
We will use these approximations to discretize the Galerkin formulation \eqref{eq:GalerkinForm} of our PDE.
The first step is to reduce to the case where the PDE data is Fourier-sparse which is motivated by the following lemma.

\begin{lem}
	\label{lem:RestrictedSupportRecovery}
	Let $ a' := a\restrict{ \supp \hat{ \vec{ a } }^s  } $ and $ f' := f\restrict{ \supp \hat{ \vec{ f } }^s } $.
	Suppose that $ a' $ and $ f' $ satisfy the conditions of Proposition~\ref{prop:ExistenceUniquenessStability} and let $ u' $ be the unique solution of the resulting elliptic PDE, which we write in Galerkin form as
	\begin{equation}
		\label{eq:GalerkinFormSupportApproximation1}
		L[\hat a'] \hat u' = \hat f'.
	\end{equation}
	Then
	\begin{equation*}
		\norm{ u - u' }_{ H } \leq \frac{\norm{ f - f' }_{ L^2 }}{a_\mathrm{ min }} + \frac{\norm{ a - a' }_{ L^\infty } \norm{ f' }_{ L^2 }}{a_\mathrm{min} a'_\mathrm{min}}.
	\end{equation*}
\end{lem}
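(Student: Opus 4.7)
The plan is to use a standard triangle-inequality perturbation argument, introducing an intermediate solution that uses the true coefficient $a$ but the approximate forcing $f'$.

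First, I would define $u'' \in H$ as the unique mean-zero weak solution to the PDE with diffusion $a$ and forcing $f'$, i.e., $\mathfrak{L}[a](u'', v) = \langle f', v \rangle_{L^2}$ for all $v \in H$ (Proposition~\ref{prop:ExistenceUniquenessStability} applied to $(a, f')$ guarantees existence and uniqueness, since $a$ still satisfies the ellipticity condition and $f'$ is mean-zero when $f$ is and $\vec{0} \notin \supp \hat{\vec{f}}^s$ except trivially — if not, one uses the approximation support accordingly). Then I split $\norm{u - u'}_H \leq \norm{u - u''}_H + \norm{u'' - u'}_H$ and bound each piece.

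For the first piece, subtracting the weak formulations for $u$ and $u''$ gives $\mathfrak{L}[a](u - u'', v) = \langle f - f', v\rangle_{L^2}$ for all $v \in H$. Directly applying the stability estimate \eqref{eq:StabilityEstimate} from Proposition~\ref{prop:ExistenceUniquenessStability} yields
\begin{equation*}
    \norm{u - u''}_H \leq \frac{\norm{f - f'}_{L^2}}{a_\mathrm{min}}.
\end{equation*}

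For the second piece, subtracting the weak formulations for $u''$ and $u'$ (both with forcing $f'$, but with diffusion $a$ and $a'$ respectively) and rearranging gives
\begin{equation*}
    \mathfrak{L}[a](u'' - u', v) = \mathfrak{L}[a - a'](u', v) \qquad \text{for all } v \in H.
\end{equation*}
Testing against $v = u'' - u'$ and applying coercivity of $\mathfrak{L}[a]$ (with constant $a_\mathrm{min}$) on the left and continuity of $\mathfrak{L}[a - a']$ (with constant $\norm{a - a'}_{L^\infty}$) on the right, then cancelling one factor of $\norm{u'' - u'}_H$, gives
\begin{equation*}
    \norm{u'' - u'}_H \leq \frac{\norm{a - a'}_{L^\infty}}{a_\mathrm{min}} \norm{u'}_H.
\end{equation*}
Finally, the stability estimate \eqref{eq:StabilityEstimate} applied to $u'$ (the solution of the PDE with data $(a', f')$) bounds $\norm{u'}_H \leq \norm{f'}_{L^2} / a'_\mathrm{min}$, and combining the two pieces via the triangle inequality yields the claimed bound.

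The main subtlety is bookkeeping around the mean-zero assumptions so that Proposition~\ref{prop:ExistenceUniquenessStability} can be applied to the intermediate problem defining $u''$ and the perturbed problem defining $u'$ — in particular, one must verify that $f'$ remains mean-zero (which follows provided the SFT-produced $\hat{\vec{f}}^s$ does not introduce a spurious zero-frequency component, or one simply restricts to the mean-zero part) and that $a'$ inherits a positive lower bound, which is exactly the hypothesis that $a'$ satisfies the conditions of Proposition~\ref{prop:ExistenceUniquenessStability}. Apart from these checks, the argument is routine perturbation analysis, and no difficult estimate is required beyond coercivity, continuity, and the basic stability estimate already established.
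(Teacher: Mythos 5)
Your proof is correct and arrives at exactly the stated bound; the only difference from the paper is organizational. The paper performs a single energy estimate on the combined error: it writes $L[\hat a](\hat u - \hat u') = \hat f - \hat f' - L[\hat a - \hat a']\hat u'$, tests against $\hat u - \hat u'$, and applies coercivity on the left and Cauchy--Schwarz plus continuity on the right before invoking the stability estimate for $u'$. You instead insert the intermediate solution $u''$ of $\mathfrak{L}[a](u'',v) = \langle f', v\rangle_{L^2}$ and split by the triangle inequality, handling the forcing perturbation and the coefficient perturbation as two separate sub-problems, each with the same three tools (coercivity, continuity, stability). The two routes are algebraically equivalent and give identical constants here, since in the paper's one-shot estimate the two residual terms are bounded independently anyway; your version makes the two error sources slightly more transparent at the cost of having to justify well-posedness of one extra auxiliary problem (the pair $(a, f')$), which you correctly note follows from the hypotheses since $f'$ is mean-zero by assumption and $a$ retains its lower bound. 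Your parenthetical sign ($\mathfrak{L}[a-a']$ versus $\mathfrak{L}[a'-a]$) is immaterial once absolute values are taken.
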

\begin{proof}
	We begin by observing
	\begin{equation*}
		L[\hat a](\hat u - \hat u') = L[\hat a] \hat u - L[\hat a'] \hat u' - L[\hat a - \hat a'] \hat u' = \hat f - \hat f' - L[\hat a - \hat a'] \hat u',
	\end{equation*}
	and therefore
	\begin{equation*}
		\abs{ \langle L[\hat a] (\hat u - \hat u'), \hat u - \hat u' \rangle } \leq \abs{ \langle \hat f - \hat f', \hat u - \hat u' \rangle } + \abs{ \langle L[\hat a - \hat a'] \hat u', \hat u - \hat u' \rangle }.
	\end{equation*}
	After an application of Proposition~\ref{prop:GalerkinBilinearForm} to convert the $ \ell^2 $ inner products into bilinear forms, we can make use of coercivity, \eqref{eq:Coercivity}, continuity, \eqref{eq:Continuity} and the Cauchy-Schwarz inequality to produce the $ H $ approximation
	\begin{equation*}
		a_\mathrm{ min } \norm{ u - u' }_{ H } \leq \norm{ \hat f - \hat f' }_{ \ell^2 } + \norm{ a - a' }_{ L^\infty } \norm{ u' }_{ H }.
	\end{equation*}
	An application of the stability estimate \eqref{eq:StabilityEstimate} gives the desired bound
	\begin{equation*}
		\norm{ u - u' }_{ H } \leq \frac{\norm{ f - f' }_{ L^2 }}{a_\mathrm{ min }} + \frac{\norm{ a - a' }_{ L^\infty } \norm{ f' }_{ L^2 }}{a_\mathrm{min} a'_\mathrm{min}}.
	\end{equation*}
\end{proof}

We can now replace the trial and test spaces in \eqref{eq:WeakPDEWholeTestSpace} with finite dimensional approximations so as to convert \eqref{eq:GalerkinForm} to a matrix equation.
Inspired by Proposition~\ref{prop:SolutionSupport} and the truncation error analysis in Section~\ref{sec:stamping_sets}, we use the space of functions whose Fourier coefficients are supported on $ \mathcal{S}^N := \mathcal{S}^N[ \hat{ a }](\supp \hat{ f } )$.
By doing so, we discretize the Galerkin formulation of the problem \eqref{eq:GalerkinForm} into the finite system of equations
\begin{equation}
	\label{eq:FiniteGalerkinOperator}
	(\vec{ L }_N \hat{ \vec{ u } })_{ \vec{k} } := \sum_{ \vec{l} \in \mathcal{S}^N } (2\pi)^2 ( \vec{ l } \cdot \vec{ k } ) \hat{ a }_{ \vec{ k } - \vec{ l } } \hat{ u }_{ \vec{ l } } = \hat{ f }_\vec{ k }  \quad \text{ for all } \vec{ k } \in \mathcal{S}^N.
\end{equation}
However, in practice, we do not know $ \hat{ a } $ and $ \hat{ f } $ exactly (and indeed, they may not be exactly sparse).
Thus, we substitute the SFT approximations $ \hat{ \vec{ a } }^s $ and $ \hat{ \vec{ f } }^s $, defining the new finite-dimensional operator $ \vec{ L }_{ N, s }: \C^{ \mathcal{S}^N } \rightarrow \C^{ \mathcal{S}^N } $ by
\begin{equation*}
	\left( \vec{L}_{ N, s } \hat{ \vec{ u } } \right)_\vec{ k } := \sum_{ \vec{ l } \in \mathcal{S}^N } (2\pi)^2 (\vec{ l } \cdot \vec{ k }) \hat{ a }_{ \vec{ k } - \vec{ l } }^s \hat{ u }_\vec{ l } \quad \text{ for all } \vec{ k } \in \mathcal{S}^N.
\end{equation*}
Our new approximate solution will be $ \hat{ \vec{ u } }^{ N, s } \in \C^{ \mathcal{S}^N } $ which solves
\begin{equation}
	\label{eq:SparseSpectralSFTEquation}
	\vec{ L }_{ N, s } \hat{ \vec{ u } }^{ N, s } = \hat{ \vec{ f } }^s.
\end{equation}
We summarize our technique in Algorithm~\ref{alg:SparseSFT}.

\begin{algorithm}[H]
	\caption{Sparse spectral method}
	\label{alg:SparseSFT}
	\begin{algorithmic}[1]
		\Require PDE data $ a $ and $ f $, a sparsity parameter $ s $, a bandwidth parameter $ K $, and a stamping level $ N $
		\Ensure Fourier coefficients $ \hat{ \vec{ u } }^{ s, N } $ of approximate solution
		\State$ \hat{ \vec{ a } }^s \gets \mathrm{SFT}[s, K](a) $ \hfill \Comment{$ \mathrm{SFT} $ is the algorithm in \cite{gross_sparse_2021} using a random rank-1 lattice (cf.\ Section~\ref{sec:improvements_with_randomized_lattices})}
		\State $ \hat{ \vec{ f } }^s \gets \mathrm{SFT}[s, K](f) $
		\State Compute $ \mathcal{ S }^N[\hat{ \vec{ a } }^s](\supp(\hat{ \vec{ f } }^s)) $ \hfill \Comment{see, e.g., \eqref{eq:StampSet} or \eqref{eq:StampFrequencyEnumeration}}
		\State $ (\vec{ L }_{ N, s })_{ \vec{ k } \in \mathcal{S}^N, \vec{ l } \in \mathcal{S}^N} \gets (2\pi)^2 (\vec{ l } \cdot \vec{ k }) \hat{ a }^s_{ \vec{ k } - \vec{ l } } $
		\State $ \hat{ \vec{ u } }^{ N, s } \gets \vec{ L }_{ N, s } \backslash \hat{ \vec{ f } }^s $ \hfill \Comment{using MATLAB backslash notation for matrix solve}
 	\end{algorithmic}
\end{algorithm}

Showing that $ u^{ N, s } $ converges to $ u $ now relies on a version of Strang's lemma \cite[Equation (6.4.46)]{canuto_spectral_2006}.
We make the assumption here that $ \supp(\hat{ a }) = \supp(\hat{ \vec{ a } }^s) $ and $ \supp(\hat{ f }) = \supp(\hat{ \vec{ f } }^s) $ so that our use of $ \mathcal{S}^N $ is unambiguous.
However, this assumption will be lifted by Lemma~\ref{lem:RestrictedSupportRecovery} in Corollary~\ref{cor:SpectralConvergenceNoSFT} below.
\begin{lem}[Strang's Lemma]
	\label{lem:StrangsLemma}
	Suppose that $ \supp(\hat{ a }) = \supp(\hat{ \vec{ a } }^s) $ and $ \supp(\hat{ f }) = \supp(\hat{ \vec{ f } }^s) $.
	Also suppose that $ a^s \geq a^s_\mathrm{min} > 0 $ on $ \T^d $.
	Let $ u $ and $ u^{ N, s } $ be as above.
	Then
	\begin{equation*}
		\norm{ u - u^{ N, s } }_{ H } \leq \left( 1 + \frac{ \norm{ a }_{ L^\infty } }{ a^s_\mathrm{min}}  \right) \norm{ u\restrict{ \Z^d \setminus \mathcal{S}^N } }_{ H } + \frac{\norm{ a - a^s }_{L^\infty}}{a^s_\mathrm{min}} \norm{ u\restrict{ \mathcal{S}^N } }_{ H } + \frac{\norm{ f - f^s }_{ L^2 }}{a^s_\mathrm{min}}.
	\end{equation*}
\end{lem}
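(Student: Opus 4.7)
The plan is to run the standard Strang/Céa-type argument adapted to the fact that both the coefficient $a$ and the right-hand side $f$ have been perturbed to $a^s$ and $f^s$. Set $w := u\restrict{\mathcal{S}^N}$ so that $u - w = u\restrict{\Z^d \setminus \mathcal{S}^N}$ and the first triangle-inequality step
\begin{equation*}
	\norm{u - u^{N,s}}_H \leq \norm{u - w}_H + \norm{w - u^{N,s}}_H
\end{equation*}
isolates the truncation contribution (already the right form for the $\norm{u\restrict{\Z^d \setminus \mathcal{S}^N}}_H$ term) from the discretization/perturbation contribution, which is what we need to work on.

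For the second term, note that $v := w - u^{N,s}$ has Fourier support in $\mathcal{S}^N$ and, being the difference of two mean-zero elements, lies in $H$. Applying the coercivity of $\mathfrak{L}[a^s]$ (guaranteed by the hypothesis $a^s \geq a^s_{\min} > 0$, via Proposition~\ref{prop:ExistenceUniquenessStability}) and using Proposition~\ref{prop:GalerkinBilinearForm} to identify $\langle \vec{L}_{N,s} \hat{\vec{u}}^{N,s}, \hat{v}\rangle_{\ell^2} = \mathfrak{L}[a^s](u^{N,s}, v) = \langle f^s, v \rangle_{L^2}$ for $v$ supported on $\mathcal{S}^N$, I can write
\begin{align*}
	\mathfrak{L}[a^s](v, v)
		&= \mathfrak{L}[a^s](w, v) - \langle f^s, v\rangle_{L^2} \\
		&= \mathfrak{L}[a](w, v) + \mathfrak{L}[a^s - a](w, v) - \langle f^s, v\rangle_{L^2} \\
		&= \langle f - f^s, v\rangle_{L^2} + \mathfrak{L}[a^s - a](w, v) - \mathfrak{L}[a](u - w, v),
\end{align*}
where in the last equality I used $\mathfrak{L}[a](w, v) = \mathfrak{L}[a](u, v) - \mathfrak{L}[a](u-w, v) = \langle f, v\rangle_{L^2} - \mathfrak{L}[a](u - w, v)$ coming from the weak formulation \eqref{eq:WeakPDEWholeTestSpace}.

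Now each of the three terms on the right admits a clean bound: $|\langle f - f^s, v\rangle_{L^2}| \leq \norm{f - f^s}_{L^2}\norm{v}_{L^2} \leq \norm{f - f^s}_{L^2}\norm{v}_H$ (using that $v \in H$ is mean-zero so that Plancherel gives $\norm{v}_{L^2} \leq (2\pi)^{-1}\norm{v}_H \leq \norm{v}_H$), while the continuity estimate \eqref{eq:Continuity} furnishes $|\mathfrak{L}[a^s - a](w, v)| \leq \norm{a - a^s}_{L^\infty}\norm{w}_H\norm{v}_H$ and $|\mathfrak{L}[a](u-w, v)| \leq \norm{a}_{L^\infty}\norm{u-w}_H\norm{v}_H$. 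Combining these with $a^s_{\min}\norm{v}_H^2 \leq |\mathfrak{L}[a^s](v,v)|$ and dividing through by $a^s_{\min}\norm{v}_H$ gives
\begin{equation*}
	\norm{w - u^{N,s}}_H \leq \frac{\norm{f - f^s}_{L^2}}{a^s_{\min}} + \frac{\norm{a - a^s}_{L^\infty}}{a^s_{\min}}\norm{w}_H + \frac{\norm{a}_{L^\infty}}{a^s_{\min}}\norm{u - w}_H,
\end{equation*}
and plugging back into the initial triangle inequality yields the stated bound.

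The only non-routine step is the choice to rewrite $\mathfrak{L}[a^s](w, v)$ by splitting off $\mathfrak{L}[a^s - a](w, v)$ rather than $\mathfrak{L}[a^s - a](u, v)$: this is what makes the coefficient-perturbation term multiply the truncated $\norm{u\restrict{\mathcal{S}^N}}_H$ instead of the full $\norm{u}_H$ in the final estimate, which is the sharper statement needed for the subsequent convergence corollaries. Everything else (coercivity, continuity, Plancherel/Poincaré on mean-zero functions, and the fact that $v$ is an admissible test vector for $\vec{L}_{N,s}$) is already in place from the earlier sections.
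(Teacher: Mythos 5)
Your proof is correct and is essentially the paper's argument: the paper defines $\hat{\vec{e}} := \hat{\vec{u}}^{N,s} - \hat{u}\restrict{\mathcal{S}^N}$ (your $-\hat v$), derives the identical three-term decomposition $\hat{\vec{f}}^s - \hat f + (L[\hat a]\hat u\restrict{\Z^d\setminus\mathcal{S}^N})\restrict{\mathcal{S}^N} + (L[\hat a - \hat{\vec a}^s]\hat u\restrict{\mathcal{S}^N})\restrict{\mathcal{S}^N}$ in the Galerkin/$\ell^2$ language rather than your bilinear-form language, and then applies the same coercivity, continuity, and triangle-inequality steps. No substantive differences.
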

\begin{proof}
	We let $ \hat{ \vec{ e } } := \hat{ \vec{ u } }^{ N, s } - \hat{ u }\restrict{ \mathcal{S}^N } $, and consider
	\begin{align*}
		\vec{ L }_{ N, s } \hat{ \vec{ e } }
			&= \vec{ L }_{ N, s } \hat{ \vec{ u } }^{ N, s } - (L[\hat{ \vec{ a } }^s] \hat{ u }\restrict{ \mathcal{S}^N })\restrict{ \mathcal{S}^N }\\
			&= \hat{ \vec{ f } }^s - \hat{ f } + (L[\hat{ a }] \hat{ u })\restrict{ \mathcal{S}^N } - (L[\hat{ \vec{ a } }^s] \hat{ u }\restrict{ \mathcal{S}^N })\restrict{ \mathcal{S}^N }\\
			&= \hat{ \vec{ f } }^s - \hat{ f } + (L[\hat{ a }] \hat{ u }\restrict{ \Z^d \setminus \mathcal{S}^N })\restrict{ \mathcal{S}^N } + (L[\hat{ a }] \hat{ u }\restrict{ \mathcal{S}^N } - L[\hat{ \vec{ a } }^s] \hat{ u }\restrict{ \mathcal{S}^N })\restrict{ \mathcal{S}^N }\\
			&= \hat{ \vec{ f } }^s - \hat{ f } + (L[\hat{ a }] \hat{ u }\restrict{ \Z^d \setminus \mathcal{S}^N })\restrict{ \mathcal{S}^N } + (L[\hat{ a } - \hat{ \vec{ a } }^s] \hat{ u }\restrict{ \mathcal{S}^N } )\restrict{ \mathcal{S}^N }.
	\end{align*}
	Noting that $ \vec{ L }_{ N, s } \hat{ \vec{ e } } = (L[\hat{ \vec{ a } }^s] \hat{ \vec{ e } })\restrict{ \mathcal{S}^N } $ and owing to coercivity of $ L[\hat{ \vec{ a } }^s] $, we have
	\begin{align*}
		a^s_\mathrm{min} \norm{ e }_{ H }^2 
			&\leq \abs{ \langle \vec{ L }_{ N, s } \hat{ \vec{ e } }, \hat{ \vec{ e } } \rangle } \\
			&\leq \norm{ f^s - f }_{ L^2 } \norm{ e }_{ H } + \norm{ a }_{ L^\infty } \norm{  u\restrict{ \Z^d \setminus \mathcal{S}^N } }_{ H } \norm{ e }_{ H } + \norm{ a - a^s }_{ L^\infty } \norm{ u\restrict{ \mathcal{S}^N } }_{ H } \norm{ e }_{ H }.
	\end{align*}
	The result then follows from rearranging to estimate $ \norm{ e }_{ H } $ and using the triangle inequality to estimate $ \norm{ u - u^{ N, s } }_{ H } \leq \norm{ u - u\restrict{ \mathcal{S}^N } }_{ H } + \norm{ e }_{ H } $.
\end{proof}

We can now thread all of our results together into a final convergence analysis.
The first corollary below is a more direct application of Strang's lemma which is then followed by another corollary which takes advantage of the SFT recovery results.
We will also return to the setting where $ a $ and $ f $ are not necessarily Fourier sparse.
Thus, for $ a^s $ and $ f^s $ Fourier sparse approximations of $ a $ and $ f $, we again let $ a' = a\restrict{ \supp \hat{ \vec{ a } }^s } $ and $ f' = f\restrict{ \supp \hat{ \vec{ f } }^s } $ as in Lemma~\ref{lem:RestrictedSupportRecovery}.

\begin{cor}
	\label{cor:SpectralConvergenceNoSFT}
	Suppose $ a $, $ f $ and $ a^s $, $ f^s $ respectively satisfy the conditions of Proposition~\ref{prop:ExistenceUniquenessStability}.
	Additionally, suppose that
	\begin{equation}
		\label{eq:IterationCondition}
		3 \sum_{ \vec{ k } \in \supp(\hat{ \vec{ a } }^s) \setminus \{\vec{ 0 }\} } \abs{\hat{ a }_\vec{ k }} \leq \hat{ a }_\vec{ 0 }.
	\end{equation}
	Then with $ u $ the exact solution to \eqref{eq:WeakPDEWholeTestSpace} and $ u^{ N, s } $ the output of Algorithm~\ref{alg:SparseSFT}, we have
	\begin{align*}
		\norm{ u - u^{ N, s } }_{ H } 
			&\leq \frac{\norm{ f - f' }_{ L^2 }}{a_\mathrm{min}} + \frac{\norm{ a - a' }_{ L^\infty }\norm{ f' }_{ L^2 }}{a_\mathrm{min}a_\mathrm{min}'}  + \left( 1 + \frac{\norm{ a' }_{ L^{ \infty } }}{a_\mathrm{min}^s}  \right) \left( \frac{\norm{ a' - \hat{ a }'_\vec{ 0 } }_{ L^\infty }}{a'_\mathrm{min} - 2 \norm{ a' - \hat{ a }'_\vec{0} }_{ L^\infty }}  \right)^{ N + 1 } \frac{\norm{ f' }_{ L^2 }}{a_\mathrm{min}'} \\
			&\qquad+ \frac{\norm{ a' - a^s }_{ L^\infty } \norm{ f' }_{ L^2 }}{a_\mathrm{min}^s a_\mathrm{min}} + \frac{\norm{ f' - f^s }_{ L^2 }}{a_\mathrm{min}^s}
	\end{align*}
\end{cor}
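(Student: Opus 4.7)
The plan is to decompose the error via the triangle inequality into two pieces — the error from replacing $(a,f)$ with their support-restricted counterparts $(a',f')$, and the error in approximating the solution of the resulting Fourier-sparse PDE by Algorithm~\ref{alg:SparseSFT} — and then apply the previously developed machinery to each piece. Concretely, I would write
\begin{equation*}
  \norm{u - u^{N,s}}_{H} \leq \norm{u - u'}_{H} + \norm{u' - u^{N,s}}_{H}
\end{equation*}
where $u'$ is the unique mean-zero solution of the weak PDE with data $(a', f')$ as in Lemma~\ref{lem:RestrictedSupportRecovery}. The first summand is handled immediately by Lemma~\ref{lem:RestrictedSupportRecovery}, which produces exactly the first two terms $\norm{f-f'}_{L^2}/a_\mathrm{min}$ and $\norm{a-a'}_{L^\infty}\norm{f'}_{L^2}/(a_\mathrm{min}a'_\mathrm{min})$ of the claimed bound.

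For the second summand, I would note that $a'$ and $f'$ are by construction exactly supported on $\supp(\hat{\vec{a}}^s)$ and $\supp(\hat{\vec{f}}^s)$ respectively, so the support hypothesis of Strang's Lemma (Lemma~\ref{lem:StrangsLemma}) is satisfied when applied to the triple $(u', a', f')$ with the SFT approximants $(a^s, f^s)$. Strang's Lemma then yields
\begin{equation*}
  \norm{u' - u^{N,s}}_{H} \leq \left(1 + \frac{\norm{a'}_{L^\infty}}{a^s_\mathrm{min}}\right) \norm{u'\restrict{\Z^d \setminus \mathcal{S}^N}}_{H} + \frac{\norm{a' - a^s}_{L^\infty}}{a^s_\mathrm{min}} \norm{u'\restrict{\mathcal{S}^N}}_{H} + \frac{\norm{f' - f^s}_{L^2}}{a^s_\mathrm{min}}.
\end{equation*}
The last term is already in the desired form. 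For the middle term I would crudely bound $\norm{u'\restrict{\mathcal{S}^N}}_{H} \leq \norm{u'}_{H}$ and then apply the stability estimate \eqref{eq:StabilityEstimate} to the PDE solved by $u'$ to get $\norm{u'}_{H} \leq \norm{f'}_{L^2}/a'_\mathrm{min}$, yielding the fourth term (up to whether the divisor is $a_\mathrm{min}$ or $a'_\mathrm{min}$, the former being conservative).

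For the truncation term $\norm{u'\restrict{\Z^d \setminus \mathcal{S}^N}}_{H} = \norm{u' - u'\restrict{\mathcal{S}^N}}_{H}$, I would invoke Lemma~\ref{lem:StampDecay} applied to the data $(a', f')$. This requires verifying that the ellipticity-type hypothesis \eqref{eq:aCondition} holds for $a'$, namely $3\norm{a' - \hat{a}'_{\vec{0}}}_{L^\infty} < a'_\mathrm{min}$. This is where assumption \eqref{eq:IterationCondition} enters: since $a' = \sum_{\vec{k} \in \supp(\hat{\vec{a}}^s)} \hat{a}_{\vec{k}} e_{\vec{k}}$, we have $\norm{a' - \hat{a}'_{\vec{0}}}_{L^\infty} \leq \sum_{\vec{k} \in \supp(\hat{\vec{a}}^s)\setminus\{\vec{0}\}} \abs{\hat{a}_{\vec{k}}}$, and this together with the resulting lower bound on $a'_\mathrm{min}$ gives the needed strict inequality. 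Lemma~\ref{lem:StampDecay} then produces the geometric factor $\left(\frac{\norm{a' - \hat{a}'_{\vec{0}}}_{L^\infty}}{a'_\mathrm{min} - 2\norm{a' - \hat{a}'_{\vec{0}}}_{L^\infty}}\right)^{N+1}\frac{\norm{f'}_{L^2}}{a'_\mathrm{min}}$, which, multiplied by the Strang prefactor $\bigl(1 + \norm{a'}_{L^\infty}/a^s_\mathrm{min}\bigr)$, matches the third term in the stated bound.

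The main obstacle is the bookkeeping around condition \eqref{eq:IterationCondition}: one must track that the restriction $a'$ inherits both positivity (so $a'_\mathrm{min}>0$ and Proposition~\ref{prop:ExistenceUniquenessStability} applies to the auxiliary PDE for $u'$) and the strict form of \eqref{eq:aCondition} needed by Lemma~\ref{lem:StampDecay}, all uniformly from the single assumption on the $\ell^1$-mass of the off-zero Fourier content of $a$ over $\supp(\hat{\vec{a}}^s)$. Once this is established, the proof is a routine concatenation of the triangle inequality with Lemmas~\ref{lem:RestrictedSupportRecovery}, \ref{lem:StrangsLemma}, and~\ref{lem:StampDecay}, together with \eqref{eq:StabilityEstimate}.
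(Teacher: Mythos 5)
Your proposal follows the paper's proof exactly: a triangle inequality through the auxiliary solution $u'$, Lemma~\ref{lem:RestrictedSupportRecovery} for the first piece, and Strang's Lemma combined with Lemma~\ref{lem:StampDecay} and the stability estimate \eqref{eq:StabilityEstimate} for the second. The only difference is that you spell out the bookkeeping (including the $a_\mathrm{min}$ versus $a'_\mathrm{min}$ question in the fourth term and the verification of \eqref{eq:aCondition} for $a'$) that the paper's three-sentence proof leaves implicit.
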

\begin{proof}
	The condition \eqref{eq:IterationCondition} ensures that $ a' $ is coercive, and therefore $ a' $ and $ f' $ also satisfy Proposition~\ref{prop:ExistenceUniquenessStability}.
	Additionally, this allows the use of Lemma~\ref{lem:StampDecay}, which upper bounds the truncation error in Lemma~\ref{lem:StrangsLemma}.
	Combining Lemma~\ref{lem:RestrictedSupportRecovery} with this bound from Lemma~\ref{lem:StrangsLemma} and applying the stability estimate from Proposition~\ref{prop:ExistenceUniquenessStability} finishes the proof.
\end{proof}

\begin{rem}
	In order for this bound to hold, it is necessary for the weak forms of both
	\begin{equation*}
		\mathcal{L}[a] u = f \text{ and } \mathcal{L}[a^s]u^s = f^s
	\end{equation*}
	to be well-posed, that is, satisfy the continuity and coercivity conditions of Proposition~\ref{prop:ExistenceUniquenessStability}.
	In practice, this condition is not much more restrictive than assuming only the original equation is well-posed as long as the diffusion coefficient is Fourier-compressible and the sparsity level $ s $ is large enough to ensure that $ a^s $ stays strictly positive.
	In fact, \eqref{eq:IterationCondition} allows for the simple (if pessimistic) check after computing $ \hat{ \vec{ a } }^s $ that $ \norm{ \hat{ \vec{ a } }^s - \hat{ a }^s_\vec{ 0 } }_{ \ell^1 } < \abs{ \hat{ a }^s_\vec{ 0 } } $ to ensure the positivity of $ a^s $.
\end{rem}

With minor modifications, we can rewrite this upper bound to pass all dependence on sparsity through the error in approximating $ a $ and $ f $ via SFTs.

\begin{cor}
	\label{cor:SpectralConvergenceWithSFT}
	Under the same conditions as Corollary~\ref{cor:SpectralConvergenceNoSFT} above substituting \eqref{eq:IterationCondition} with
	\begin{equation*}
		3 \norm{ \hat{ a } - \hat{ a }_\vec{ 0 } }_{ \ell^1 } + \norm{ \hat{ a } - \hat{ \vec{ a } }^s }_{ \ell^1 } < \hat{ a }_\vec{ 0 },
	\end{equation*}
	we have
	\begin{align*}
		\norm{ u - u^{ N, s } }_{ H } 
			&\leq \left( 1 + \frac{\norm{ \hat{ a } }_{ \ell^1 }}{a_\mathrm{min} - \norm{ \hat{ a } - \hat{ \vec{ a } }^s }_{ \ell^1 }}  \right) \frac{\norm{ f }_{ L^2 }}{a_\mathrm{min} - \norm{ \hat{ a } - \hat{ \vec{ a } }^s }_{ \ell^1 }} \\
			&\qquad \times \left( \frac{\norm{ f - f^s }_{ L^2 }}{\norm{ f }_{ L^2 }} + \norm{ a - a^s }_{ L^\infty } + \left( \frac{\norm{ \hat{ a } - \hat{ a }_\vec{ 0 } }_{ \ell^1 }}{a_\mathrm{min} - 2 \norm{ \hat{ a } - \hat{ a }_\vec{ 0 } }_{ \ell^1 } - \norm{ \hat{ a } - \hat{ \vec{ a } }^s }_{ \ell^1 }}  \right)^{ N + 1 } \right).
	\end{align*}
\end{cor}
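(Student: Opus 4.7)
The plan is to take the bound from Corollary~\ref{cor:SpectralConvergenceNoSFT} and systematically replace every quantity depending on the intermediate functions $a'$, $f'$, $a^s$, $f^s$ with a quantity depending only on the PDE data $a$, $f$ and the SFT errors $\norm{\hat{a}-\hat{\vec{a}}^s}_{\ell^1}$ and $\norm{f-f^s}_{L^2}$. The first step is to control the coercivity denominators. Since $a-a'$ has Fourier support in $\Z^d\setminus\supp\hat{\vec{a}}^s$, the pointwise bound $\norm{a-a'}_{L^\infty}\le\norm{\hat{a}-\hat{\vec{a}}^s}_{\ell^1}$ is immediate, and the analogous Fourier $L^\infty\le\ell^1$ estimate (already packaged in Corollary~\ref{cor:SFT}) gives $\norm{a-a^s}_{L^\infty}\le\norm{\hat{a}-\hat{\vec{a}}^s}_{\ell^1}$. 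Hence $a'_\mathrm{min},\,a^s_\mathrm{min}\ge a_\mathrm{min}-\norm{\hat{a}-\hat{\vec{a}}^s}_{\ell^1}$, and combined with the trivial bound $a_\mathrm{min}\le\hat{a}_\vec{0}$ the new hypothesis ensures that both are strictly positive. Moreover, because $\sum_{\vec{k}\in\supp\hat{\vec{a}}^s\setminus\{\vec{0}\}}|\hat{a}_\vec{k}|\le\norm{\hat{a}-\hat{a}_\vec{0}}_{\ell^1}$, the new hypothesis immediately implies hypothesis \eqref{eq:IterationCondition} of Corollary~\ref{cor:SpectralConvergenceNoSFT}.

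Next, the intermediate norms are collapsed. Plancherel together with the restriction property give $\norm{f'}_{L^2}\le\norm{f}_{L^2}$ and $\norm{a'}_{L^\infty}\le\norm{\hat{a}'}_{\ell^1}\le\norm{\hat{a}}_{\ell^1}$. The source-error pieces $\norm{f-f'}_{L^2}$ and $\norm{f'-f^s}_{L^2}$ have disjoint Fourier supports, so Plancherel yields $\norm{f-f'}_{L^2}^2+\norm{f'-f^s}_{L^2}^2=\norm{f-f^s}_{L^2}^2$ and each piece is in particular dominated by $\norm{f-f^s}_{L^2}$. The two $a$-difference contributions $\norm{a-a'}_{L^\infty}$ and $\norm{a'-a^s}_{L^\infty}$ are analogously each dominated by $\norm{\hat{a}-\hat{\vec{a}}^s}_{\ell^1}$, which in turn is exactly the upper bound on $\norm{a-a^s}_{L^\infty}$ provided by the $L^\infty$ half of Corollary~\ref{cor:SFT}, allowing both to be absorbed into the single $\norm{a-a^s}_{L^\infty}$ appearing in the target bracket after the common prefactor is pulled out. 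Finally, the stamping-decay factor from Lemma~\ref{lem:StampDecay} applied to $a'$ has the form $(\norm{a'-\hat{a}'_\vec{0}}_{L^\infty}/(a'_\mathrm{min}-2\norm{a'-\hat{a}'_\vec{0}}_{L^\infty}))^{N+1}$; using $\hat{a}'_\vec{0}=\hat{a}_\vec{0}$ together with $\norm{a'-\hat{a}'_\vec{0}}_{L^\infty}\le\norm{\hat{a}-\hat{a}_\vec{0}}_{\ell^1}$ and the Step~1 lower bound on $a'_\mathrm{min}$ promotes this ratio to the target $\norm{\hat{a}-\hat{a}_\vec{0}}_{\ell^1}/(a_\mathrm{min}-2\norm{\hat{a}-\hat{a}_\vec{0}}_{\ell^1}-\norm{\hat{a}-\hat{\vec{a}}^s}_{\ell^1})$, whose denominator is positive precisely under the new hypothesis.

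With every piece thus bounded, the final step is bookkeeping: regroup the terms so that $(1+\norm{\hat{a}}_{\ell^1}/(a_\mathrm{min}-\norm{\hat{a}-\hat{\vec{a}}^s}_{\ell^1}))\cdot\norm{f}_{L^2}/(a_\mathrm{min}-\norm{\hat{a}-\hat{\vec{a}}^s}_{\ell^1})$ factors out as a common prefactor, leaving the three residual contributions $\norm{f-f^s}_{L^2}/\norm{f}_{L^2}$, $\norm{a-a^s}_{L^\infty}$, and the geometric stamping-decay term inside a single bracket. The main obstacle is not any individual inequality---each is a Plancherel bound, a Fourier $L^\infty\le\ell^1$ estimate, or the positivity arithmetic of the new hypothesis---but rather the careful bookkeeping that threads the four or five separate summands of Corollary~\ref{cor:SpectralConvergenceNoSFT} into the unified prefactor-times-bracket form, especially showing that the two distinct $L^\infty$ contributions in the $a$-variable can be consolidated under a single $\norm{a-a^s}_{L^\infty}$ representative after the prefactor is extracted.
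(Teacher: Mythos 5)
Your proposal follows essentially the same route as the paper's proof: bound $\norm{ a - a' }_{ L^\infty }$ and $\norm{ a' - a^s }_{ L^\infty }$ by $\norm{ \hat{ a } - \hat{ \vec{ a } }^s }_{ \ell^1 }$ and the $f$-differences by $\norm{ f - f^s }_{ L^2 }$, lower-bound $a'_\mathrm{min}$ and $a^s_\mathrm{min}$ by $a_\mathrm{min} - \norm{ \hat{ a } - \hat{ \vec{ a } }^s }_{ \ell^1 }$, then apply the stability estimate and regroup --- and your added details (the Pythagorean splitting of the $f$-error, the monotonicity of $t \mapsto t/(c - 2t)$ for the stamping factor, and the check that the new hypothesis implies \eqref{eq:IterationCondition}) correctly fill in steps the paper leaves implicit. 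One small correction: to get positivity of $a_\mathrm{min} - \norm{ \hat{ a } - \hat{ \vec{ a } }^s }_{ \ell^1 }$ from the new hypothesis you need the lower bound $a_\mathrm{min} \geq \hat{ a }_\vec{ 0 } - \norm{ \hat{ a } - \hat{ a }_\vec{ 0 } }_{ \ell^1 }$, not the upper bound $a_\mathrm{min} \leq \hat{ a }_\vec{ 0 }$ that you cite.
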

\begin{proof}
	Since $ \hat{ a }' = \hat{ a }\restrict{ \supp \hat{ \vec{ a } }^s } $,
	\begin{gather*}
		\norm{ a - a' }_{ L^\infty } \leq \norm{ \hat{ a } - \hat{ a }' }_{ \ell^1 } \leq \norm{ \hat{ a } - \hat{ \vec{ a } }^s }_{ \ell^1 },\\
		\norm{ a' - a^s }_{ L^\infty } \leq \norm{ \hat{ a }' - \hat{ \vec{ a } }^s }_{ \ell^1 } \leq \norm{ \hat{ a } - \hat{ \vec{ a } }^s }_{ \ell^1 },
	\end{gather*}
	and analogously to show that $ \norm{ f - f' }_{ L^2 } $ and $ \norm{ f' - f^s }_{ L^2 } $ are bounded above by $ \norm{ f - f^s }_{ L^2 } $.
	Additionally, 
	\begin{gather*}
		a^s \geq a - \norm{ a - a^s }_{ L^\infty } \geq a - \norm{ \hat{ a } - \hat{ \vec{ a } }^s }_{ \ell^1 } \text{ and} \\
		a' \geq a - \norm{ a - a' }_{ L^\infty } \geq a - \norm{ \hat{ a } - \hat{ \vec{ a } }^s }_{ \ell^1 }
	\end{gather*}
	giving $ \min(a^s_\mathrm{min}, a'_\mathrm{min}) \geq a_\mathrm{min} - \norm{ \hat{ a } - \hat{ \vec{ a } }^s }_{ \ell^1 } $.
	The rest follows from applications of \eqref{eq:StabilityEstimate} and rearranging.
\end{proof}

\begin{rem}
	Though this final bound is difficult to parse, we can focus our attention on the final factor 
	\begin{equation}
		\label{eq:DrivingErrorFactor}
		\frac{\norm{ f - f^s }_{ L^2 }}{\norm{ f }_{ L^2 }} + \norm{ a - a^s }_{ L^\infty } + \left( \frac{\norm{ \hat{ a } - \hat{ a }_\vec{ 0 } }_{ \ell^1 }}{a_\mathrm{min} - 2 \norm{ \hat{ a } - \hat{ a }_\vec{ 0 } }_{ \ell^1 } - \norm{ \hat{ a } - \hat{ \vec{ a } }^s }_{ \ell^1 }}  \right)^{ N + 1 },
	\end{equation}
	since the other factors are more or less fixed.
	The first two terms are respectively controlled by having good SFT approximations to $  f $ in the $ L^2 $ norm and $ a $ in the  $ L^\infty $ norm.
	In our algorithm, these terms can be reduced by increasing the bandwidth $ K $ and the sparsity $ s $.
	As a reminder, the errors in these approximations given in Theorem~\ref{thm:SFTRecovery} are near optimal, as
	\begin{equation*}
			\norm{ f - f^s }_{ L^2 } \leq (25 + 3K) \sqrt{ s }\norm{ \hat{ f } - \left( \hat{ f }\restrict{ K } \right)_s^\mathrm{opt} }_{ \ell^1 } \text{ and }
			\norm{ a - a^s }_{ L^\infty } \leq (33 + 4K) \norm{ \hat{ a } - \left( \hat{ a }\restrict{ K } \right)_s^\mathrm{opt} }_{ \ell^1 }
	\end{equation*}
	with high probability.
	
	The final term is controlled by properties of $ a $ as well as the final stamping level used.
	Overall, the convergence is exponential in $ N $, the stamping level.
	This convergence is accelerated as the base of the exponent decreases: effectively, this happens as the diffusion coefficient approaches a large constant.
	Indeed, the numerator can be thought of as an upper bound for the absolute deviation of $ a $ from its mean while the denominator grows with the minimum of $ a $.
\end{rem}

\begin{rem}
	The computational complexity of Algorithm~\ref{alg:SparseSFT} is
	\begin{equation*}
		\mathcal{O}\left(ds \log^4(dK \max(K, s)) + \max(s, 2N + 1)^{ 3 \min(s, 2N + 1)}\right).
	\end{equation*}
	This is due to the two SFTs and a matrix solve of a $ \abs{ \mathcal{S}^N } \times \abs{ \mathcal{S}^N } $ system.
	Note that computing the stamping set can be done by enumerating the frequencies using the techniques in Lemma~\ref{lem:StampSetCardinality} and therefore is subject to the same upper bound as given in Lemma~\ref{lem:StampSizeUpperBound} for a stamp set's cardinality.
	Recall also that the SFT complexity can be tuned to produce SFT approximations satisfying the above bounds higher probability.

	We do not analyze the complexity of the matrix solve in depth, and instead resort to the upper bound given by Gaussian elimination on the dense matrix, $ \mathcal{O}\left( \max(s, 2N + 1)^{ 3 \min(s, 2N + 1) } \right) $.
	However, $ \vec{ L }_{ N, s } $ is relatively sparse for larger stamping levels.
	As the capabilities of sparse solvers depend strongly on analyzing the graph connecting interacting rows in $ \vec{ L }_{ N, s } $ (cf.\ \cite[Chapter~11]{golub_matrix_2013}), we expect that the analysis of an efficient sparse solver could be carried out using much of the same analysis of stamping sets performed in Section~\ref{sec:stamping_sets}.
\end{rem}

\begin{rem}
	\label{rem:ADR}
	This paper considers the theory for solving the simple diffusion equation \eqref{eq:DiffusionEquation}.
	However, these techniques extend to more complex advection-diffusion-reaction (ADR) equations.
	The test problem is then
	\begin{equation}
		\label{eq:ADR}
		- \nabla \cdot (a(\vec{ x }) \nabla u(\vec{ x })) + \vec{ b }(\vec{ x }) \cdot \nabla u(\vec{ x }) + c(\vec{ x }) u(\vec{ x }) = f(\vec{ x }) \text{ for all } \vec{ x } \in \T^3.
	\end{equation}
	As before $ a, f, u: \T^d \rightarrow \R $ are the diffusion coefficient, forcing function, and solution respectively.
	These are now joined by an advection field $ \vec{ b }: \T^d \rightarrow \R^d $ and an additional reaction coefficient $ c: \T^d \rightarrow \R $.
	For more on the properties and well-posedness of this periodic ADR equation, we refer to \cite{brugiapaglia_waveletfourier_2020}.

	Adapting Algorithm~\ref{alg:SparseSFT} for solving ADR equations requires two modifications:
	\begin{enumerate}
		\item When computing the approximations $ \hat{ \vec{ a } }^s, \hat{ \vec{ f } }^s $ via SFT, additionally compute $ \hat{ \vec{ b } }^s := (\hat{ \vec{ b } }_j^s)_{ j = 1 }^d $, an approximation to the Fourier coefficients of each component of $ \vec{ b } $, and compute $ \hat{ \vec{ c } }^s $, an approximation to $ \hat{ c } $.
		\item Redefine the ``stamp'' used to define $ \mathcal{S}^N[\hat{ \vec{ a } }^s](\supp(\hat{ \vec{ f } }^s)) $ by including the supports of $ \hat{ \vec{ b } }^s $ and $ \hat{ \vec{ c } }^s $.
			Mathematically, we define
			\begin{equation*}
				\mathcal{S}^N[\hat{ \vec{ a } }^s, \hat{ \vec{ b } }^s, \hat{ \vec{ c } }^s](\supp(\hat{ \vec{ f } }^s)) := 
				\begin{cases}
					\supp(\hat{ \vec{ f } }^s) & \text{if } N = 0 \\
					\mathcal{S}^{ N - 1 } + \supp(\hat{ \vec{ a } }^s) + \sum_{ j = 1 }^d \supp(\hat{ \vec{ b } }_j^s) + \supp(\hat{ \vec{ c } }^s) & \text{if } N > 0
				\end{cases}
			\end{equation*}
			where, as usual, we suppress the Fourier coefficients when clear from context.
	\end{enumerate}

	The convergence analysis for this method is much the same as that leading to Corollary~\ref{cor:SpectralConvergenceWithSFT} where terms like $ \norm{ a - a^s }_{ L^\infty } $ are replaced by $ \max\left\{ \norm{ a - a^s }_{ L^\infty }, \norm{ \norm{ \vec{ b } - \vec{ b }^s }_{ \ell^2 } }_{ L^\infty }, \norm{ c - c^s }_{ L^\infty } \right\} $ and similarly for the mean-zero version of $ a $ used in the exponentially decaying term.
	For full details see \cite{gross_dissertation_2023}.
\end{rem}

\section{Numerics}\label{sec:numerics}

This section gives examples of the algorithm summarized above applied to various problems.
We begin with an overview of our implementation as well as some techniques used to evaluate the accuracy of our approximations.
We then present solutions to univariate and very high-dimensional multiscale problems with both exactly sparse and Fourier-compressible data.
We then close with an extension of our methods to a three-dimensional advection-diffusion-reaction equation.

\subsection{Code and testing overview}\label{sub:code_and_testing_overview}

We implement Algorithm~\ref{alg:SparseSFT} described above in MATLAB using an object-oriented approach, with all code publicly available.\footnote{\url{https://gitlab.com/grosscra/SparseADR}}
All SFTs are computed using the rank-1 lattice sparse Fourier code from \cite{gross_sparse_2021}.\footnote{this code is publicly available at \url{https://gitlab.com/grosscra/Rank1LatticeSparseFourier}}

In order to evaluate the quality of our approximations, we need to choose an appropriate metric.
Letting $ u^{ s, N } $ be the approximation returned by our algorithm, the ideal choice would be $ \norm{ u - u^{ s, N } }_{ H } $.
However, for the types of problems we will be investigating, the true solution $ u $ is unavailable to us.
Instead, we will use a proxy that takes advantage of the stability result in Proposition~\ref{prop:ExistenceUniquenessStability}.

\begin{lem}
	\label{lem:ProxyError}
	Let $ u $ be the true solution to \eqref{eq:GalerkinForm} and $ u^{ s, N } $ be the approximation returned by solving \eqref{eq:SparseSpectralSFTEquation}.
	Define $ \hat{ f }^{ s, N } := L[\hat{ a }]\hat{ u }^{ s, N } $ with $ f^{ s, N } = \mathcal{L}[a]u^{ s, N } $.
	Then
	\begin{equation*}
		\norm{ u - u^{ s, N } }_{ H } \leq \frac{\norm{ f - f^{ s, N } }_{ L^2 }}{a_\mathrm{min}} = \frac{\norm{ \hat{ f } - \hat{ f }^{ s, N } }_{ \ell^2 }}{a_\mathrm{min}}.
	\end{equation*}
\end{lem}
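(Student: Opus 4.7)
The plan is to exploit the linearity of $\mathcal{L}[a]$ together with the stability estimate from Proposition~\ref{prop:ExistenceUniquenessStability}. Since $u$ is the true solution and $u^{s,N}$ is the approximate solution, the difference $w := u - u^{s,N}$ satisfies $\mathcal{L}[a] w = f - f^{s,N}$ by construction of $f^{s,N}$.

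First I would verify that $w$ is an admissible test/trial function, i.e., lies in $H$. Since the original $u$ is mean-zero by Proposition~\ref{prop:ExistenceUniquenessStability}, it suffices to note that $u^{s,N}$ is also mean-zero: the equation $(\vec{L}_{N,s}\hat{\vec{u}}^{N,s})_{\vec{k}} = \hat{\vec{f}}^s_{\vec{k}}$ at $\vec{k} = \vec{0}$ (if $\vec{0} \in \mathcal{S}^N$) reduces to $0 = 0$ because of the $\vec{l} \cdot \vec{k}$ factor, so $\hat{u}^{N,s}_{\vec{0}}$ can be (and is) taken to be $0$. Similarly, $f - f^{s,N}$ is mean-zero because $\hat{f}_{\vec{0}} = 0$ and $\mathcal{L}[a]u^{s,N}$ has zero $\vec{0}$-th Fourier coefficient (again because of the $\vec{l}\cdot\vec{k}$ factor vanishing).

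Next I would invoke Proposition~\ref{prop:ExistenceUniquenessStability} applied to the equation $\mathcal{L}[a] w = f - f^{s,N}$, which immediately yields
\begin{equation*}
	\norm{w}_H \leq \frac{\norm{f - f^{s,N}}_{L^2}}{a_\mathrm{min}}.
\end{equation*}
Finally, the equality $\norm{f - f^{s,N}}_{L^2} = \norm{\hat{f} - \hat{f}^{s,N}}_{\ell^2}$ is exactly Plancherel's identity (Proposition~\ref{prop:Plancherel}) applied to $f - f^{s,N} \in L^2$.

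There is no real obstacle here: the lemma is essentially a one-line consequence of the stability estimate, and the only thing to check carefully is that the difference lies in the mean-zero space $H$ so that the coercivity bound in Proposition~\ref{prop:ExistenceUniquenessStability} applies. The computation of $f^{s,N} = \mathcal{L}[a] u^{s,N}$ is well-defined as an $L^2$ function whenever $a$ and $u^{s,N}$ are sufficiently regular (here $u^{s,N}$ is a trigonometric polynomial, hence smooth, so $\mathcal{L}[a]u^{s,N} \in L^2$ as long as $a \in L^\infty$, which is assumed).
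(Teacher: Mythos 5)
Your proof is correct and follows essentially the same route as the paper: observe that $u - u^{s,N}$ solves $\mathcal{L}[a](u - u^{s,N}) = f - f^{s,N}$ by definition of $f^{s,N}$, apply the stability estimate \eqref{eq:StabilityEstimate} from Proposition~\ref{prop:ExistenceUniquenessStability}, and use Plancherel for the final equality. The extra checks you include (mean-zeroness of $u^{s,N}$ and of $f - f^{s,N}$ via the vanishing $\vec{l}\cdot\vec{k}$ factor) are details the paper leaves implicit, and they are verified correctly.
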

\begin{proof}
	The result follows from the fact that $ \hat{ u } - \hat{ u }^{ s, N } $ solves $ L[\hat{ a }]\left(\hat{ u } - \hat{ u }^{ s, N }\right) = \hat{ f } - L[\hat{ a }]\hat{ u }^{ s, N } = \hat{ f } - \hat{ f }^{ s, N }$ and applying Proposition~\ref{prop:ExistenceUniquenessStability}.
\end{proof}

In the sequel, we will ignore $ a_\mathrm{min} $ since we are mostly interested in convergence properties in $ s $ and $ N $ and we will compute the relative error
\begin{equation*}
	\frac{\norm{ f - f^{ s, N } }_{ L^2 }}{\norm{ f }_{ L^2 }} \text{ or } \frac{\norm{ \hat{ f } - \hat{ f }^{ s, N } }_{ \ell^2 }}{\norm{ \hat{ f } }_{ \ell^2 }}
\end{equation*}
as our proxy instead.
Whenever $ \hat{ f } $ and $ \hat{ a } $ are exactly sparse, the numerator of the second term can be computed exactly due to the fact that $ \supp(\hat{ f }^{ s, N }) $ is known to be contained in $ \mathcal{S}^{ N + 1 } $ (cf.\ Proposition~\ref{prop:LExpansion}).
However, in the non-sparse setting, even though $ f - f^{ s, N } $ can be evaluated pointwise, computing an accurate approximation of its norm on $ \T^d $ is challenging for large $ d $.
For this reason, we approximate the norm via Monte Carlo sampling.
We also furnish the cases where exactly computing $ \norm{ \hat{ f } - \hat{ f }^{ s, N } }_{ \ell^2 } $ is possible with the pointwise Monte Carlo estimates to show that in practice, Monte Carlo sampling does as well as the exact computation.

\subsection{Univariate compressible}\label{sub:univariate_compressible}

We begin by replicating the lone numerical example of solving an elliptic problem in \cite[Section~5.1]{daubechies_sparse_2007}.
In this case, we solve the univariate problem
\begin{equation}
	\begin{gathered}
		\label{eq:ODE}
		- (a(x) u'(x))' = f(x) \text{ for all } x \in \T, \text{ where }\\ 
		a(x) = \frac{1}{10} \exp \left( \frac{0.6 + 0.2 \cos(2 \pi x)}{1 + 0.7 \sin(256 \pi x)} \right), \quad f(x) = \exp(-\cos(2\pi x)) - \int_\T \exp(-\cos(2\pi x)) \d x
	\end{gathered}
\end{equation}
(note that the only difference from \cite{daubechies_sparse_2007} is that we use the domain $ \T = [0, 1] $ rather than $ [0, 2\pi] $).
This data is not Fourier sparse, but is compressible.
In the original paper, a bandwidth of $ K = 1\,536 $ is considered and approximations with $ 9 $ and $ 17 $ Fourier coefficients are used.

We first construct a high accuracy approximation of the solution to \eqref{eq:ODE} by numerically integrating on an extremely fine mesh of $ 10\,000 $ points.
This allows us to forgo our proxy error described in Lemma~\ref{lem:ProxyError}.
As in \cite{daubechies_sparse_2007}, the bandwidth of our SFT used is set to $ K = 1\,536 $.
Due to our SFT returning a $ 2s $ sparse approximation, we use $ s = 4 $ and $ s = 8 $ to compare with the $ 9 $ and $ 17 $ terms respectively considered in the original paper, and also provide an example with $ s = 12 $.
We set the stamping level to $ N = 1 $ throughout, which, as discussed in the introduction, is similar to the technique used in \cite{daubechies_sparse_2007}.

\begin{figure}[ht]
	\centering
	\begin{tikzpicture}[scale=1]
		\datavisualization [
			scientific axes={width=0.5\linewidth},
			x axis={attribute = s,
				label={$ s $ (sparsity)},
				ticks={about=3, step=4, minor steps between steps=3}},
			y axis={attribute = y,
				label={Relative error},
				logarithmic,
				ticks={minor steps between steps=9,
					tick typesetter/.code=\nomantissatypeset{##1}}},
			legend/.list={south west inside, transparent},
			legend entry options/default label in legend path/.style=
				straight label in legend line,
			visualize as line/.list={gridErrors,gridErrorsDeriv,gridErrorsProxy},
			gridErrors={label in legend={text=$L^2$}},
			gridErrorsDeriv={label in legend={text=$H^1$}},
			gridErrorsProxy={label in legend={text=Proxy error}},
			style sheet/.list={vary dashing, strong colors, vary mark},
		]
		data [set=gridErrors, headline={s, N, y}, read from file=Results/Daubechies1DSparsity/gridErrors_s_N_y.csv]
		data [set=gridErrorsDeriv, headline={s, N, y}, read from file=Results/Daubechies1DSparsity/gridErrorsDeriv_s_N_y.csv]
		data [set=gridErrorsProxy, headline={s, N, y}, read from file=Results/Daubechies1DSparsity/gridErrorsProxy_s_N_y.csv];
\end{tikzpicture}
\caption{Errors in approximating the solution to \eqref{eq:ODE}.}\label{fig:Daubechies1DErrors}
\end{figure}

\begin{figure}[ht]
	\begin{subfigure}{0.49\textwidth}
	\begin{center}
	\begin{tikzpicture}[scale=1]
		\datavisualization[
			scientific axes={width=0.8\textwidth},
			x axis={attribute = x, ticks=some},
			y axis={attribute = y, ticks={some, tick typesetter/.code=\fixedzerofilltypeset{2}{##1}}},
			legend=south west inside,
			legend entry options/default label in legend path/.style=
				straight label in legend line,
			visualize as smooth line/.list={trueSol, s4, s8, s12},
			trueSol={label in legend={text=$u$}},
			s4 ={label in legend={text=$u^{ 4, 1 }$}},
			s8 ={label in legend={text=$u^{ 8, 1 }$}},
			s12 ={label in legend={text=$u^{ 12, 1 }$}},
			style sheet/.list = {strong colors, vary dashing}
		]
		data [set=trueSol, headline={x, y}, read from file=Results/Daubechies1DSparsity/trueSol_x_y.csv]
		data [set=s4, headline={x, y}, read from file=Results/Daubechies1DSparsity/recoveredSol_s_4_N_1536_x_y.csv]
		data [set=s8, headline={x, y}, read from file=Results/Daubechies1DSparsity/recoveredSol_s_8_N_1536_x_y.csv]
		data [set=s12, headline={x, y}, read from file=Results/Daubechies1DSparsity/recoveredSol_s_12_N_1536_x_y.csv];
	\end{tikzpicture}
	\end{center}
\caption{Approximate solutions of \eqref{eq:ODE}.}
	\label{fig:Daubechies1DSolution}
	\end{subfigure}
	\hfill
	\begin{subfigure}{0.49\textwidth}
	\begin{center}
	\begin{tikzpicture}[scale=1]
		\datavisualization[
			scientific axes={width=0.8\textwidth},
			x axis={attribute = x, ticks={few, tick typesetter/.code=\fixedzerofilltypeset{3}{##1}}},
			y axis={attribute = y, ticks={some, tick typesetter/.code=\fixedzerofilltypeset{2}{##1}}},
			legend={above of={x=0.682,y=.1}, transparent},
			legend entry options/default label in legend path/.style=
				straight label in legend line,
			visualize as smooth line/.list={trueDeriv, s4, s8, s12},
			trueDeriv={label in legend={text=$u'$}},
			s4 ={label in legend={text=$(u^{ 4, 1 })'$}},
			s8 ={label in legend={text=$(u^{ 8, 1 })'$}},
			s12 ={label in legend={text=$(u^{ 12, 1 })'$}},
			style sheet/.list = {strong colors, vary dashing}
		]
		data [set=trueDeriv, headline={x, y}, read from file=Results/Daubechies1DSparsity/trueDeriv_x_y.csv]
		data [set=s4, headline={x, y}, read from file=Results/Daubechies1DSparsity/recoveredDeriv_s_4_N_1536_x_y.csv]
		data [set=s8, headline={x, y}, read from file=Results/Daubechies1DSparsity/recoveredDeriv_s_8_N_1536_x_y.csv]
		data [set=s12, headline={x, y}, read from file=Results/Daubechies1DSparsity/recoveredDeriv_s_12_N_1536_x_y.csv];
	\end{tikzpicture}
	\end{center}
\caption{Detail of approximate derivatives of \eqref{eq:ODE}.}
	\label{fig:Daubechies1DDerivative}
	\end{subfigure}
	\caption{Qualitative results.}
	\label{fig:Daubechies1DErrorsQualitative}
\end{figure}

The relative errors approximated in $ L^2 $ and $ H^1 $ are given in Figure~\ref{fig:Daubechies1DErrors}.
The original paper does not give numerical results, and instead, gives qualitative results, comparing the approximate solutions and their derivatives with the true solution and its derivative.
We have replicated this qualitative analysis in Figure~\ref{fig:Daubechies1DErrorsQualitative} with similar results.

Figure~\ref{fig:Daubechies1DErrors} also shows the error computed via the proxy described by Lemma~\ref{lem:ProxyError}, and in particular, how pessimistic the proxy error can be. 
In this case, the small errors in the derivative (visualized in Figure~\ref{fig:Daubechies1DDerivative}) are compounded by passing the approximate solution through the operator where $ a' $ is often large relative to $ a $.
In future examples, we will see that the convergence of the proxy error is much more tolerable.

\subsection{Multivariate exactly sparse}\label{sub:multivariate_exactly_sparse}

\subsubsection{Low sparsity}\label{ssub:low_sparsity}

Moving to the multivariate case, we start with a simple example with exactly sparse data.
Our goal is to solve 
\begin{equation}
	\begin{gathered}
		\label{eq:PDELowSparsity}
		- \nabla \cdot (a(\vec{ x }) \nabla u(\vec{ x })) = f(\vec{ x }) \text{ for all } \vec{ x } \in \T^d, \text{ where }\\ 
		a(\vec{ x }) = \hat{ a }_0 + c_{ a } \cos(2 \pi \vec{ k }_a \cdot \vec{ x }), \quad f(x) = \sin(2 \pi \vec{ k }_f \cdot \vec{ x }).
	\end{gathered}
\end{equation}
We draw $ c_a \sim \Unif \left(  [-1, 1] \right) $, keep it constant for each dimension, and set $ \hat{ a }_\vec{ 0 } = 4 $ so that our problem remains elliptic (in the specific example below, $ c_a \approx -0.6 $). 
For dimensions varying from $ d = 1 $ to $ d = 1\,024 $, we then draw $ \vec{ k }_a, \vec{ k }_f \sim \Unif \left( [-499, 500]^d \cap \Z^d \right) $.
The PDE \eqref{eq:PDELowSparsity} is then solved for stamping levels $ N = 1,\ldots, 5 $.
The bandwidth of the SFT is set to $ 1000 $ and the sparsity is set to $ 2 $. 
We then compute a Monte Carlo approximation of the proxy error choosing $ 200 $ points drawn uniformly from $ \T^d $ and also compute the proxy error exactly by virtue of the sparsity of $ a $ and $ f $.
The results are given in Figure~\ref{fig:oneTermDiffusion}.

\begin{figure}[ht]
	\begin{subfigure}[t]{0.48\linewidth}
		\centering
		\begin{tikzpicture}[scale=1]
			\datavisualization [
				scientific axes={width=0.74\linewidth},
				x axis={attribute = n,
					label={$ N $ (stamping level)},
					ticks={few}},
				y axis={attribute = y,
					label={$ \frac{\norm{f - f^{ s, N } }_{ L^2 }}{\norm{ f }_{ L^2 }}$ },
					logarithmic,
					ticks={minor steps between steps=9,
						tick typesetter/.code=\nomantissatypeset{##1}}},
				legend={below, down then right, columns = 2, matrix node style={draw}},
				legend entry options/default label in legend path/.style=
					straight label in legend line,
				visualize as line/.list={
					gridErrors1,
					fourierErrors1,
					gridErrors4,
					fourierErrors4,
					gridErrors16,
					fourierErrors16,
					gridErrors64,
					fourierErrors64,
					gridErrors256,
					fourierErrors256,
					gridErrors1024,
					fourierErrors1024
				},
				gridErrors1={label in legend={text={$d = 1$ Monte Carlo}}},
				gridErrors4={label in legend={text={$d = 4$ Monte Carlo}}},
				gridErrors16={label in legend={text={$d = 16$ Monte Carlo}}},
				gridErrors64={label in legend={text={$d = 64$ Monte Carlo}}},
				gridErrors256={label in legend={text={$d = 256$ Monte Carlo}}},
				gridErrors1024={label in legend={text={$d = 1024$ Monte Carlo}}},
				fourierErrors1={label in legend={text={$d = 1$ exact}}},
				fourierErrors4={label in legend={text={$d = 4$ exact}}},
				fourierErrors16={label in legend={text={$d = 16$ exact}}},
				fourierErrors64={label in legend={text={$d = 64$ exact}}},
				fourierErrors256={label in legend={text={$d = 64$ exact}}},
				fourierErrors1024={label in legend={text={$d = 1024$ exact}}},
				style sheet/.list={alternate dashing, color pair, alternate mark},
			]
			data [set=gridErrors1, headline={n, y}, read from file=Results/oneTermDiffusion/gridErrors_d1_n_y.csv]
			data [set=gridErrors4, headline={n, y}, read from file=Results/oneTermDiffusion/gridErrors_d4_n_y.csv]
			data [set=gridErrors16, headline={n, y}, read from file=Results/oneTermDiffusion/gridErrors_d16_n_y.csv]
			data [set=gridErrors64, headline={n, y}, read from file=Results/oneTermDiffusion/gridErrors_d64_n_y.csv]
			data [set=gridErrors256, headline={n, y}, read from file=Results/oneTermDiffusion/gridErrors_d256_n_y.csv]
			data [set=gridErrors1024, headline={n, y}, read from file=Results/oneTermDiffusion/gridErrors_d1024_n_y.csv]
			data [set=fourierErrors1, headline={n, y}, read from file=Results/oneTermDiffusion/fourierErrors_d1_n_y.csv]
			data [set=fourierErrors4, headline={n, y}, read from file=Results/oneTermDiffusion/fourierErrors_d4_n_y.csv]
			data [set=fourierErrors16, headline={n, y}, read from file=Results/oneTermDiffusion/fourierErrors_d16_n_y.csv]
			data [set=fourierErrors64, headline={n, y}, read from file=Results/oneTermDiffusion/fourierErrors_d64_n_y.csv]
			data [set=fourierErrors256, headline={n, y}, read from file=Results/oneTermDiffusion/fourierErrors_d256_n_y.csv]
			data [set=fourierErrors1024, headline={n, y}, read from file=Results/oneTermDiffusion/fourierErrors_d1024_n_y.csv];
		\end{tikzpicture}
\caption{Proxy error solving \eqref{eq:PDELowSparsity} with $ d = 1$, $ 4 $, $ 16 $, $ 64 $, $ 256 $, $ 1\,024 $ and $ N = 1, \ldots, 5 $.}\label{fig:oneTermDiffusion}
	\end{subfigure}
	\hfill
	\begin{subfigure}[t]{0.48\linewidth}
		\centering
		\begin{tikzpicture}[scale=1]
			\datavisualization [
				scientific axes={width=0.74\linewidth},
				x axis={attribute = n,
					label={$ N $ (stamping level)},
					ticks={step=1}},
				y axis={attribute = y,
					label={$ \frac{\norm{f - f^{ s, N } }_{ L^2 }}{\norm{ f }_{ L^2 }}$ },
					logarithmic,
					ticks={minor steps between steps=9,
						tick typesetter/.code=\nomantissatypeset{##1}}},
				legend={below, down then right, columns = 2, matrix node style={draw}},
				legend entry options/default label in legend path/.style=
					straight label in legend line,
				visualize as line/.list={
					gridErrors1,
					fourierErrors1,
					gridErrors4,
					fourierErrors4,
					gridErrors16,
					fourierErrors16,
					gridErrors64,
					fourierErrors64,
					gridErrors256,
					fourierErrors256,
					gridErrors1024,
					fourierErrors1024
				},
				gridErrors1={label in legend={text={$d = 1$ Monte Carlo}}},
				gridErrors4={label in legend={text={$d = 4$ Monte Carlo}}},
				gridErrors16={label in legend={text={$d = 16$ Monte Carlo}}},
				gridErrors64={label in legend={text={$d = 64$ Monte Carlo}}},
				gridErrors256={label in legend={text={$d = 256$ Monte Carlo}}},
				gridErrors1024={label in legend={text={$d = 1024$ Monte Carlo}}},
				fourierErrors1={label in legend={text={$d = 1$ exact}}},
				fourierErrors4={label in legend={text={$d = 4$ exact}}},
				fourierErrors16={label in legend={text={$d = 16$ exact}}},
				fourierErrors64={label in legend={text={$d = 64$ exact}}},
				fourierErrors256={label in legend={text={$d = 64$ exact}}},
				fourierErrors1024={label in legend={text={$d = 1024$ exact}}},
				style sheet/.list={alternate dashing, color pair, alternate mark},
			]
			data [set=gridErrors1, headline={n, y}, read from file=Results/highSparsityDiffusion/gridErrors_d1_n_y.csv]
			data [set=gridErrors4, headline={n, y}, read from file=Results/highSparsityDiffusion/gridErrors_d4_n_y.csv]
			data [set=gridErrors16, headline={n, y}, read from file=Results/highSparsityDiffusion/gridErrors_d16_n_y.csv]
			data [set=gridErrors64, headline={n, y}, read from file=Results/highSparsityDiffusion/gridErrors_d64_n_y.csv]
			data [set=gridErrors256, headline={n, y}, read from file=Results/highSparsityDiffusion/gridErrors_d256_n_y.csv]
			data [set=gridErrors1024, headline={n, y}, read from file=Results/highSparsityDiffusion/gridErrors_d1024_n_y.csv]
			data [set=fourierErrors1, headline={n, y}, read from file=Results/highSparsityDiffusion/fourierErrors_d1_n_y.csv]
			data [set=fourierErrors4, headline={n, y}, read from file=Results/highSparsityDiffusion/fourierErrors_d4_n_y.csv]
			data [set=fourierErrors16, headline={n, y}, read from file=Results/highSparsityDiffusion/fourierErrors_d16_n_y.csv]
			data [set=fourierErrors64, headline={n, y}, read from file=Results/highSparsityDiffusion/fourierErrors_d64_n_y.csv]
			data [set=fourierErrors256, headline={n, y}, read from file=Results/highSparsityDiffusion/fourierErrors_d256_n_y.csv]
			data [set=fourierErrors1024, headline={n, y}, read from file=Results/highSparsityDiffusion/fourierErrors_d1024_n_y.csv];
		\end{tikzpicture}
\caption{Proxy error solving \eqref{eq:PDELowSparsity} with diffusion coefficient \eqref{eq:PDEHighSparsityDiffusion} in dimensions $ d = 4$, $ 64 $, $ 1\,024 $ and stamping levels $ N = 1, \ldots, 3 $.}\label{fig:diffusionHighSparsity}
	\end{subfigure}
	\caption{Solving diffusion equation with exactly sparse data.}
	\label{fig:sparseDiffusion}
\end{figure}

We see that the results do not depend on the dimension of the problem.
Since all dependence on $ d $ is in the runtime of the SFT, we also observe that in practice, after the SFTs of the data have been computed, re-solving the problem on different stamping levels takes about the same amount of time for each $ d $.
The error also converges exponentially in the stamping level as suggested by the theoretical error guarantees.
Notably, we also see that the Monte Carlo approximation with $ 200 $ points captures the same proxy error as the exact computation.

\subsubsection{High sparsity}\label{ssub:high_sparsity}

We expand on the exactly sparse case by testing a diffusion coefficient with much higher sparsity.
Here, we solve \eqref{eq:PDELowSparsity} with 
\begin{equation}
	\label{eq:PDEHighSparsityDiffusion}
	a(\vec{ x }) = \hat{ a }_\vec{ 0 } + \sum_{ \vec{ k } \in \mathcal{I}_a } c_\vec{ k }\cos(2 \pi \vec{ k } \cdot \vec{ x }).
\end{equation}
The vector of coefficients is drawn as $ \vec{ c } \sim \Unif\left( [-1, 1]^{ 25 } \right) $ once and reused in each test.
For every $ d $, the frequencies $ \vec{ k } \in \mathcal{I}_a $ are each drawn uniformly from $ [-499, 500]^d \cap \Z^d $ as before with $ |\mathcal{I}_a| = 25 $.
Here $ \hat{ a }_0 = 4 \left\lceil \norm{ \vec{ c } }_2 \right\rceil $ to ensure ellipticity.
Again, the bandwidth of the SFT algorithm is set to $ 1\,000 $, but the sparsity is now fixed to $ 26 $.
The results are given in Figure~\ref{fig:diffusionHighSparsity}

Again, we see that the results do not depend on the spatial dimension except for the notable example of $ d = 1 $.
The $ d = 1 $ case suffers from similar issues in a pessimistic proxy error as in Figure~\ref{fig:Daubechies1DErrors}.
Specifically, the right hand-side for this example was generated with frequency $ k_f = -10 $ and is therefore relatively low-frequency.
Thus, the high-frequency modes leading to errors in the approximate solution are amplified by the high-frequencies in $ a $ when computing $ f^{ s, N } $.
Indeed, in further experiments (not pictured here), increasing the frequencies of $ f $ or decreasing the frequencies of $ a $ result in a lower proxy error.

For the other dimensions, the slight offsets in the exact proxy error can be attributed to the randomized frequencies as well as slight variations in the randomized SFT code.
We do see slightly more variance in the proxy error computed using Monte Carlo sampling however.
This is to be expected for data with more varied frequency content, and as such, in future experiments, we increase the number of sampling points.

Note that because we consider sparsity much larger than the stamping level, the computational and memory complexity of the stamping and solution step is much higher.
As suggested by Lemma~\ref{lem:StampSizeUpperBound}, the size of the resulting stamp set (and therefore the necessary matrix solve) in the largest case is at most $ 7 \cdot 52^{ 7 } \approx 7 \times 10^{ 12 } $ which pushes the memory boundaries of our computational resources.

\subsection{Multivariate compressible}\label{sub:multivariate_compressible}

In order to test Fourier-compressible data which is not exactly sparse, we use a series of tensorized, periodized Gaussians.
Here, we present the only details necessary to demonstrate our algorithm's effectiveness on Fourier-compressible data, but for a fuller treatment on the Fourier properties of periodized Gaussians, see e.g., \cite[Section~2.1]{merhi_new_2019}.

Here, we define the periodic Gaussian $ G_{ r } : \T \rightarrow \R $ by
\begin{equation*}
	G_{ r }(x) = \frac{\sqrt{ 2\pi }}{r}\sum_{ m = -\infty }^\infty \e^{ - \frac{(2\pi)^2(x - m)}{2 r^2} }
\end{equation*}
where the dilation-type parameter $ r $ allows us to control the effective support of $ \hat{ G }_r $.
In practice, we truncate the infinite sum to $ m \in \{-10, \ldots, 10\} $ as additional terms do not change the output up to machine precision.
Note here that the nonstandard multiplicative factors help control the behavior of the function in frequency rather than space.
Given a multivariate modulating frequency $ \vec{ k } \in \Z^d $, we define the modulated, tensorized, periodic Gaussian by
\begin{equation*}
	G_{ r, \vec{ k } }(\vec{ x }) = \prod_{ j = 1 }^d \e^{ 2\pi \i k_i x_i } G_r(x_i).
\end{equation*}
Finally, given a set of frequencies $ \mathcal{I} \subset \Z^d $, dilation parameters $ \vec{ r } \in \R_+^\mathcal{I} $, and coefficients $ \vec{ c } \in \R^{ \mathcal{I} } $, we can define Gaussian series
\begin{equation*}
	G_{ \vec{ c }, \vec{ r } }^{ \mathcal{I} }(\vec{ x }) := \sum_{ \vec{ k } \in \mathcal{I} } c_\vec{ k } G_{ r_\vec{ k }, \vec{ k } }(\vec{ x }).
\end{equation*}

Depending on the severity of the dilations chosen (i.e., $ r_\vec{ k } \gg 1 $), this can well approximate a Fourier series with frequencies in $ \mathcal{I} $.
On the other hand, a less severe dilation results in Fourier coefficients with magnitudes forming less concentrated Gaussians centered around the ``frequencies'' $ \vec{ k } \in \mathcal{I} $ and $ -\vec{ k } $.
An example of a series with its associated Fourier transform is given in Figure~\ref{fig:gaussianExample}.

\begin{figure}[ht]
	\centering
	\begin{subfigure}[t]{0.45\linewidth}
		\begin{tikzpicture}[scale=0.8]
			\begin{axis}[
				view={-30}{30},
				xlabel=$x_1$,
				xtick distance=0.25,
				ylabel=$x_2$,
				ytick distance=0.25,
				ztick distance=10,
			]
				\addplot3 [
					surf, 
					fill=white,
					mesh/rows=100,
					colormap name={viridis},
				] table [col sep=comma] {Figures/gaussian_xN_100_yN_100.csv};
			\end{axis}	
		\end{tikzpicture}
		\caption{$c_1 G_{r_1, \vec{ k }_1} + c_2 G_{r_2, \vec{ k }_2}$}
		\label{fig:gaussianExampleFunction}
	\end{subfigure}
	\quad
	\begin{subfigure}[t]{0.45\linewidth}
		\begin{tikzpicture}[scale=0.8]
			\begin{axis}[
				view={0}{90},
				xlabel=$k_1$,
				xtick distance=10,
				ylabel=$k_2$,
				ytick distance=10,
			]
				\addplot3 [
					surf, 
					mesh/rows=100,
					colormap name={viridis},
				] table [col sep=comma] {Figures/gaussian_FFT_xN_100_yN_100.csv};
			\end{axis}	
		\end{tikzpicture}
		\caption{$c_1 \hat{ G }_{r_1, \vec{ k }_1} + c_2 \hat{ G }_{r_2, \vec{ k }_2}$}
		\label{fig:gaussianExampleFourierTranform}
	\end{subfigure}
\caption{An example Gaussian series with $ c_1 = c_2 = 1 $, $ r_1 = 0.5 $, $ r_2 = 2 $, $ \vec{ k }_1 = (3, 2) $, and $ \vec{ k }_2 = (-5, 15) $.
	The first term corresponds to the wider Gaussian shape and more spread out portions of the Fourier transform.
	The second term contributes to the highly oscillatory parts and the isolated spikes in the Fourier transform.}\label{fig:gaussianExample}
\end{figure}
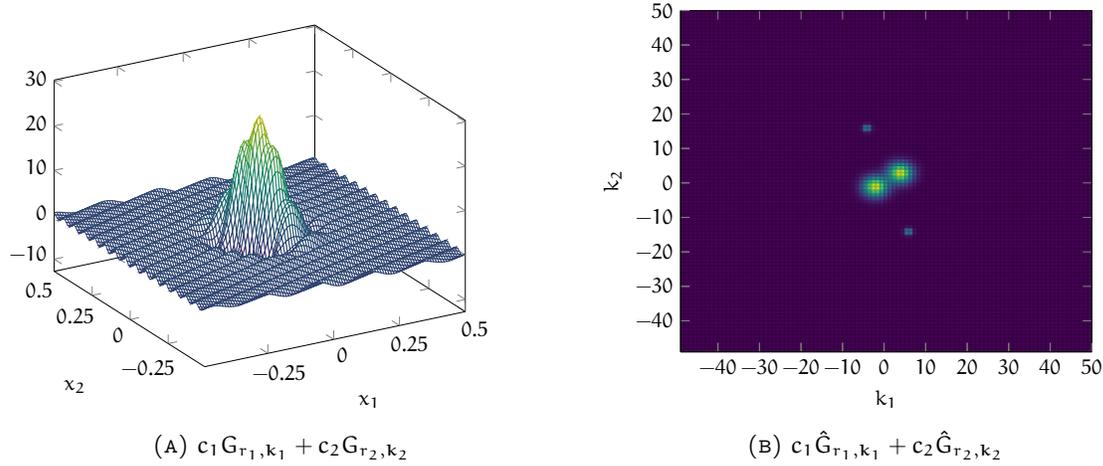

In our first experiment, we fix $ d = 2 $ and vary both stamp level and sparsity to again solve \eqref{eq:PDELowSparsity}.
The diffusion coefficient in \eqref{eq:PDELowSparsity} is replaced with a two-term Gaussian series $ a = c_0 + G^{ \mathcal{I} }_{ \vec{ c }, \vec{ r } } $, where
\begin{equation*}
	\mathcal{I} \sim \Unif \left( \left( [-24, 25]^2 \cap \Z^2 \right)^2  \right), \quad \vec{ c } \sim \Unif\left([-1, 1]^2\right), \quad \vec{ r } = 1.1^2 \vec{ 1 }, \quad c_0 = 10 \left\lceil \norm{ \vec{ c } }_2 \right\rceil.
\end{equation*}
Note the increased constant factor from our previous examples to decrease the likelihood of sparse approximations of $ a $ not satisfying the ellipticity property.
The Fourier transform of the resulting $ a $ used for the following test is depicted in Figure~\ref{fig:gaussianSeriesDiffusionSparsityFFT} below.
The diffusion equation is then solved across various sparsities with increasing stamping level.
The bandwidth parameter of the SFT is set to $ K = 100 $ to account for the wider effective support of $ \hat{ a } $.
The Monte Carlo proxy error is computed with $ 1\,000 $ samples and depicted in Figure~\ref{fig:gaussianSeriesDiffusionSparsity}.

\begin{figure}[ht]
	\begin{subfigure}[t]{0.48\linewidth}
		\centering
		\begin{tikzpicture}[scale=0.8]
			\begin{axis}[
				view={0}{90},
				xlabel=$k_1$,
				xtick distance=10,
				ylabel=$k_2$,
				ytick distance=10,
			]
				\addplot3 [
					surf, 
					mesh/rows=100,
					colormap name={viridis},
				] table [col sep=comma] {Results/gaussianSeriesDiffusionSparsity/gaussian_FFT_xN_100_yN_100_.csv};
			\end{axis}	
		\end{tikzpicture}
\caption{The specific $\hat{ a }$ used in examples depicted in Figure~\ref{fig:gaussianSeriesDiffusionSparsity}.}\label{fig:gaussianSeriesDiffusionSparsityFFT}
	\end{subfigure}
	\hfill
	\begin{subfigure}[t]{0.48\linewidth}
		\centering
		\begin{tikzpicture}[scale=1]
			\datavisualization [
				scientific axes={width=0.74\linewidth},
				x axis={attribute = n,
					label={$ N $ (stamping level)},
					ticks={step=1}},
				y axis={attribute = y,
					label={$ \frac{\norm{f - f^{ s, N } }_{ L^2 }}{\norm{ f }_{ L^2 }}$ },
					logarithmic,
					ticks={minor steps between steps=9,
						tick typesetter/.code=\nomantissatypeset{##1}}},
				legend={below, down then right, columns = 2, matrix node style={draw}},
				legend entry options/default label in legend path/.style=
					straight label in legend line,
				visualize as line/.list={
					gridErrors2,
					gridErrors4,
					gridErrors8,
					gridErrors16,
					gridErrors32,
					gridErrors64
				},
				gridErrors2={label in legend={text={$s = 2$ Monte Carlo}}},
				gridErrors4={label in legend={text={$s = 4$ Monte Carlo}}},
				gridErrors8={label in legend={text={$s = 8$ Monte Carlo}}},
				gridErrors16={label in legend={text={$s = 16$ Monte Carlo}}},
				gridErrors32={label in legend={text={$s = 32$ Monte Carlo}}},
				gridErrors64={label in legend={text={$s = 64$ Monte Carlo}}},
				style sheet/.list={vary dashing, strong colors, vary mark},
				]
			data [set=gridErrors2, headline={n, y}, read from file=Results/gaussianSeriesDiffusionSparsity/gridErrors_s2_n_y.csv]
			data [set=gridErrors4, headline={n, y}, read from file=Results/gaussianSeriesDiffusionSparsity/gridErrors_s4_n_y.csv]
			data [set=gridErrors8, headline={n, y}, read from file=Results/gaussianSeriesDiffusionSparsity/gridErrors_s8_n_y.csv]
			data [set=gridErrors16, headline={n, y}, read from file=Results/gaussianSeriesDiffusionSparsity/gridErrors_s16_n_y.csv]
			data [set=gridErrors32, headline={n, y}, read from file=Results/gaussianSeriesDiffusionSparsity/gridErrors_s32_n_y.csv]
			data [set=gridErrors64, headline={n, y}, read from file=Results/gaussianSeriesDiffusionSparsity/gridErrors_s64_n_y.csv];
		\end{tikzpicture}
\caption{Proxy error solving \eqref{eq:PDELowSparsity} with Gaussian series diffusion coefficient with sparsity levels $ s = 2, 4, 8, 16, 32, 64 $, and stamping levels $ N = 1, \ldots, 3 $.}\label{fig:gaussianSeriesDiffusionSparsity}
	\end{subfigure}
	\caption{Solving diffusion equation with Gaussian series data.}
	\label{fig:gaussianSeries}
\end{figure}

Here, the stamping level does not affect convergence until the sparsity is above $ s \geq 16 $.
This demonstrates the tradeoff between sparsity and stamping level in regards to the error bound \eqref{eq:DrivingErrorFactor}.
Until the SFT is able to capture enough useful information in $ \hat{ a } $, the $ \norm{a - a^s}_{ L^\infty } $ in the error bound dominates.
Eventually, this factor is reduced far enough that the stamping term becomes apparent.

We provide another example, where sparsity is fixed at $ s = 16 $, and dimension and stamping level are increased.
Again we solve \eqref{eq:PDELowSparsity} with the diffusion coefficient replaced by the two-term Gaussian series $ a = c_0 + G^{ \mathcal{I} }_{ \vec{ c }, \vec{ r } } $, where
\begin{equation*}
	\mathcal{I} \sim \Unif \left( \left( [-249, 250]^d \cap \Z^d \right)^2  \right), \quad \vec{ c } \sim \Unif\left([-1, 1]^2\right), \quad \vec{ r } = 1.1^d \vec{ 1 }, \quad c_0 = 10 \left\lceil \norm{ \vec{ c } }_2 \right\rceil,
\end{equation*}
and $ \vec{ c } $ and $ c_0 $ are not redrawn across test cases.
The bandwidth of the SFT is set to $ 1\,000 $ to again account for the potentially widened Fourier transform of $ a $.
With a $ 1\,000 $ point Monte Carlo approximation of the proxy error, the results are given in Figure~\ref{fig:gaussianSeriesDiffusionStamp}.

\begin{figure}[ht]
	\centering
	\begin{tikzpicture}[scale=1]
		\datavisualization [
			scientific axes={width=0.5\linewidth},
			x axis={attribute = n,
				label={$ N $ (stamping level)},
				ticks={few}},
			y axis={attribute = y,
				label={$ \frac{\norm{f - f^{ s, N } }_{ L^2 }}{\norm{ f }_{ L^2 }}$ },
				logarithmic,
				ticks={minor steps between steps=9,
					tick typesetter/.code=\nomantissatypeset{##1}}},
			legend={below, down then right, columns = 2, matrix node style={draw}},
			legend entry options/default label in legend path/.style=
				straight label in legend line,
			visualize as line/.list={
				gridErrors2,
				gridErrors4,
				gridErrors8,
				gridErrors16
			},
			gridErrors2={label in legend={text={$d = 2$ Monte Carlo}}},
			gridErrors4={label in legend={text={$d = 4$ Monte Carlo}}},
			gridErrors8={label in legend={text={$d = 8$ Monte Carlo}}},
			gridErrors16={label in legend={text={$d = 16$ Monte Carlo}}},
			style sheet/.list={vary dashing, strong colors, vary mark},
		]
		data [set=gridErrors2, headline={n, y}, read from file=Results/gaussianSeriesDiffusionDimension/gridErrors_d2_n_y.csv]
		data [set=gridErrors4, headline={n, y}, read from file=Results/gaussianSeriesDiffusionDimension/gridErrors_d4_n_y.csv]
		data [set=gridErrors8, headline={n, y}, read from file=Results/gaussianSeriesDiffusionDimension/gridErrors_d8_n_y.csv]
		data [set=gridErrors16, headline={n, y}, read from file=Results/gaussianSeriesDiffusionDimension/gridErrors_d16_n_y.csv];
	\end{tikzpicture}
\caption{Approximate proxy error solving \eqref{eq:PDELowSparsity} with Gaussian series diffusion coefficient with $ d = 2$, $ 4 $, $ 8 $, $ 16 $ and $ N = 1, \ldots, 5 $.}\label{fig:gaussianSeriesDiffusionStamp}
\end{figure}

Here we observe much the same behavior as the previous test case.
This is due to the fact that the dimension additionally drives the sparsity of the Gaussian Fourier transforms based on the choice of dilation $ \vec{ r } = 1.1^d \vec{ 1 } $.
In additional experiments performed at higher dimensions (not pictured here), this factor results in numerical instability and the approximation error blows up.
We also see that the $ d = 2 $ and $ d = 4 $ examples are swapped from their assumed positions (and the $ d = 2 $ case even mildly benefits from increased stamping level).
This is attributed to the random draw of the frequency locations affecting the proxy error as well as the SFT algorithm performing better in lower dimensions when all parameters are fixed.

\subsection{Three-dimensional exactly sparse advection-diffusion-reaction equation}\label{sub:three_dimensional_adr}

We now extend our numerical experiments to the situation of a three-dimensional advection-diffusion-reaction equation.
See Remark~\ref{rem:ADR} for the PDE setup and necessary algorithmic modifications.

Numerically, we work with the following exactly sparse data:
\begin{equation}
	\label{eq:ADRTerms}
	\begin{gathered}
		a(\vec{ x }) = \hat{ a }_\vec{ 0 } + \sum_{ \vec{ k } \in \mathcal{I}_a^\mathrm{sine} } c_{a, \vec{ k } }^\mathrm{sine} \sin(2\pi \vec{ k } \cdot \vec{ x }) + \sum_{ \vec{ k } \in \mathcal{I}_a^\mathrm{cosine} } c_{a, \vec{ k } }^\mathrm{cosine} \cos(2\pi \vec{ k } \cdot \vec{ x })\\
		b_j(\vec{ x }) = \sum_{ \vec{ k } \in \mathcal{I}_{b_j}^\mathrm{sine} } c_{b_j, \vec{ k } }^\mathrm{sine} \sin(2\pi \vec{ k } \cdot \vec{ x }) + \sum_{ \vec{ k } \in \mathcal{I}_{ b_j }^\mathrm{cosine} } c_{b_j, \vec{ k } }^\mathrm{cosine} \cos(2\pi \vec{ k } \cdot \vec{ x }) \text{ for all } j = 1,2,3\\
		c(\vec{ x }) = \hat{ c }_\vec{ 0 } + \sum_{ \vec{ k } \in \mathcal{I}_c^\mathrm{sine} } c_{c, \vec{ k } }^\mathrm{sine} \sin(2\pi \vec{ k } \cdot \vec{ x }) + \sum_{ \vec{ k } \in \mathcal{I}_c^\mathrm{cosine} } c_{c, \vec{ k } }^\mathrm{cosine} \cos(2\pi \vec{ k } \cdot \vec{ x })\\
		f(\vec{ x }) = \sum_{ \vec{ k } \in \mathcal{I}_f^\mathrm{sine} } c_{f, \vec{ k } }^\mathrm{sine} \sin(2\pi \vec{ k } \cdot \vec{ x }) + \sum_{ \vec{ k } \in \mathcal{I}_f^\mathrm{cosine} } c_{f, \vec{ k } }^\mathrm{cosine} \cos(2\pi \vec{ k } \cdot \vec{ x }),
	\end{gathered}
\end{equation}
where
\begin{equation*}
	\begin{gathered}
		\abs{\mathcal{I}_a^\mathrm{sine}} = \abs{\mathcal{I}_a^\mathrm{cosine}} = 2 \\
		\abs{\mathcal{I}_{ b_j }^\mathrm{sine}} = \abs{\mathcal{I}_{ b_j }^\mathrm{cosine}} =\abs{\mathcal{I}_{ c }^\mathrm{sine}} = \abs{\mathcal{I}_\vec{ c }^\mathrm{cosine}} = 5  \text{ for all } j = 1,2,3\\
		\abs{\mathcal{I}_f^\mathrm{sine}} = 2, \text{ and } \abs{\mathcal{I}_f^\mathrm{cosine}} = 3.
	\end{gathered}
\end{equation*}
In total, there are $ 45 $ terms composing the differential operator, and $ 5 $ terms composing the forcing function.
Each frequency is randomly drawn from $ \Unif([-49, 50]^3 \cap \Z^3) $ and each coefficient for $ a $ and $ f $ from $ \Unif([-1, 1]) $.
The coefficients for $ \vec{ b } $ and $ c $ are drawn from $ \Unif([0, 1]) $.
To ensure well-posedness, $ \hat{ a }_\vec{ 0 } = 4 \left \lceil \sqrt{ \norm{ c_a^\mathrm{sine} }_2^2 + \norm{ c_a^\mathrm{cosine} }_2^2 } \right \rceil $, and $ \hat{ c }_\vec{ 0 } = 4 \left \lceil \sqrt{ \norm{ c_c^\mathrm{sine} }_2^2 + \norm{ c_c^\mathrm{cosine} }_2^2 } \right \rceil  $.
The bandwidth of the SFT is set to $ K = 100 $ and consider sparsity levels $ s = 2 $ and $ s = 5 $.
Due to the large size of the stamp, we only consider stamping levels $ N = 1,2$.

\begin{table}[ht]
	\centering
	\pgfplotstabletypeset[col sep=comma,
every head row/.style={before row=\toprule & & \multicolumn{2}{c}{$ \norm{ f - f^{ s, N } }_{ L^2}/ \norm{ f }_{ L^2 } $}\\\cmidrule{3-4}},
		every last row/.style={after row=\bottomrule},
		every first column/.style={column type/.add={}{}},
		every column/.style={column type/.add={}{}},
		columns/s/.style={
			column name = $ s $,
			assign cell content/.code={\ifodd\pgfplotstablerow
					\pgfkeyssetvalue{/pgfplots/table/@cell content}{
						\cmidrule{2-4}}
				\else
					\pgfkeyssetvalue{/pgfplots/table/@cell content}{
						\midrule \multirow{2}{*}{##1}}
				\fi
			}
		},
		columns/n/.style={
			column name = $ N $,
		},
		columns/err_fourierError/.style={
			column name = exact,
			fixed, fixed zerofill, precision=3
		},
		columns/err_gridError/.style={
			column name = Monte Carlo,
			fixed, fixed zerofill, precision=3,
		},
	]{./ADR3DError.csv}
\caption{Error in approximating solution to ADR equation \eqref{eq:ADR}.}
	\label{tab:ADRError}
\end{table}

\begin{figure}[ht]
	\begin{subfigure}[t]{0.3\linewidth}
		\begin{tikzpicture}[scale=0.6]
			\begin{axis}[
				view={0}{90},
				xlabel=$x_2$,
				ylabel=$x_3$
			]
				\addplot3 [
					surf, 
					mesh/rows=64,
					colormap name={viridis},
				] table [col sep=comma] {Figures/fRecoveredVals_N128_n1_s2_x0.492188_2d.csv};
			\end{axis}	
		\end{tikzpicture}
		\caption{Slice through $ f^{ 2, 1 } $.}\label{fig:ADR3DRightHandSideApproximateN1Slice}
	\end{subfigure}
	\begin{subfigure}[t]{0.3\linewidth}
		\begin{tikzpicture}[scale=0.6]
			\begin{axis}[
				view={0}{90},
				xlabel=$x_2$,
				ylabel=$x_3$
			]
				\addplot3 [
					surf, 
					mesh/rows=64,
					colormap name={viridis},
				] table [col sep=comma] {Figures/fRecoveredVals_N128_n2_s10_x0.492188_2d.csv};
			\end{axis}	
		\end{tikzpicture}
		\caption{Slice through $ f^{ 10, 2 } $.}\label{fig:ADR3DRightHandSideApproximateN2Slice}
	\end{subfigure}
	\begin{subfigure}[t]{0.3\linewidth}
		\begin{tikzpicture}[scale=0.6]
			\begin{axis}[
				view={0}{90},
				xlabel=$x_2$,
				ylabel=$x_3$
			]
				\addplot3 [
					surf, 
					mesh/rows=64,
					colormap name={viridis},
				] table [col sep=comma] {Figures/fVals_N128_x0.492188_2d.csv};
			\end{axis}	
		\end{tikzpicture}
		\caption{Slice through $ f $.}\label{fig:ADR3DRightHandSideSlice}
	\end{subfigure}
	\caption{Samples of $ f^{ 10, 2 } $ and $ f $ on the $ x_1 = 63 / 128 $ plane.}
	\label{fig:ADR3DRightHandSide}
\end{figure}
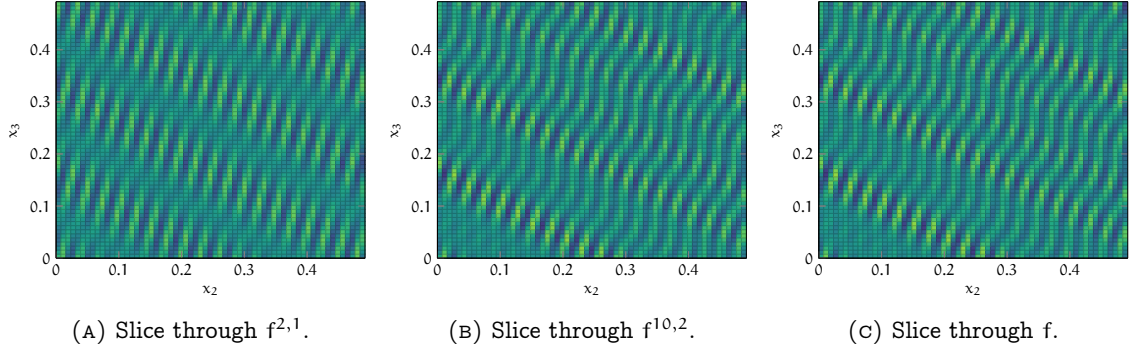

The resulting true and Monte Carlo proxy error (sampled over $ 1\,000 $ points) is given in Table~\ref{tab:ADRError}.
Additionally, Figure~\ref{fig:ADR3DRightHandSide} shows a portion of a slice through $ f $ as well as $ f^{ 2, 1 } $ and $ f^{ 10, 2 } $ which are computed by passing $ u^{ 2, 1 } $ and $ u^{ 10, 2 } $ through the differential operator.

We note that $ f^{ 10, 2 } $ and $ f $ appear qualitatively indistinguishable.
However, since the sparsity level, $ s = 2 $, used to compute $ u^{ 2, 1 } $ is lower than the sparsity of any term in \eqref{eq:ADRTerms}, $ f^{ 2, 1 } $ loses some of characteristics of the original source term.
Though it captures some of the true behavior in both larger scales (e.g., the oscillations moving in the northeast direction) and finer scales (e.g., the oscillations moving in the southeast direction), some interfering modes which produce the ``wavy'' effect are left out.
This is supported by the relative errors reported in Table~\ref{tab:ADRError}.
Note also that the stamping level affects the convergence in $ s = 5 $ case, but not the $ s = 2 $ case.
This is due to the sparsity related errors in \eqref{eq:DrivingErrorFactor} overwhelming the stamping term until the SFT approximations of the data are accurate enough.

\appendix

\section{Stamp set cardinality bound}\label{sec:stamp_set_cardinality_bound}

We begin by proving the following combinatorial upper bound for the cardinality of a stamp set.

\begin{lem}
	\label{lem:StampSetCardinality}
	Suppose that $ \vec{ 0 } \in \supp(\hat{ a }) $, $ \supp(\hat{ a }) = -\supp(\hat{ a }) $, $ \abs{ \supp(\hat{ a }) } = s $.
	Then
	\begin{equation}
		\label{eq:CombinatorialStampSetCardinality}
		\abs{ \mathcal{S}^N[\hat{ a }](\supp(\hat{ f })) } \leq \abs{ \supp(\hat{ f }) } \sum_{ n = 0 }^N \sum_{ t = 0 }^{ \min(n, (s-1)/2) } 2^{ t } \binom{(s-1)/2}{t} \binom{n - 1}{t - 1}.
	\end{equation}
\end{lem}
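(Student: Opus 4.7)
The plan is to exploit the symmetry assumption on $\supp(\hat{a})$ and the presence of $\vec{0}$ in it to parameterize every element of $\mathcal{S}^N$ as a fixed base point in $\supp(\hat{f})$ shifted by a signed integer combination of a small set of generators. Since $\supp(\hat{a}) = -\supp(\hat{a})$ and $\vec{0} \in \supp(\hat{a})$, the $s - 1$ nonzero frequencies of $\hat{a}$ pair up as $\{\pm \vec{a}_1, \ldots, \pm \vec{a}_r\}$ with $r = (s-1)/2$ (in particular, these hypotheses force $s$ to be odd, so $r$ is a nonnegative integer). Because $\vec{0}$ lies in the stamp, every element of $\mathcal{S}^N$ can be written as $\vec{k}_f + \sum_{m=1}^N \vec{k}_a^m$ with each $\vec{k}_a^m \in \supp(\hat{a})$, where some of the summands are allowed to be zero.

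Next I would collect like terms in such an expansion. If the generator $\vec{a}_i$ contributes with total signed multiplicity $c_i \in \Z$ (positive occurrences minus negative occurrences), then $\vec{k} = \vec{k}_f + \sum_{i=1}^r c_i \vec{a}_i$, and the number of nonzero summands $\vec{k}_a^m$ consumed is at least $\sum_i |c_i|$. Since at most $N$ nonzero summands are available, we obtain the containment
\begin{equation*}
	\mathcal{S}^N \subset \left\{ \vec{k}_f + \sum_{i=1}^{r} c_i \vec{a}_i : \vec{k}_f \in \supp(\hat{f}),\ (c_1, \ldots, c_r) \in \Z^r,\ \sum_{i=1}^r |c_i| \leq N \right\},
\end{equation*}
so $|\mathcal{S}^N|$ is bounded above by $|\supp(\hat{f})|$ times the number of integer tuples $(c_1, \ldots, c_r)$ with $\sum_i |c_i| \leq N$.

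The remaining step is a standard combinatorial count of those tuples, stratified by the total $n := \sum_i |c_i|$ (which runs from $0$ to $N$) and by the number $t$ of indices with $c_i \neq 0$ (which runs from $0$ to $\min(n, r)$). For fixed $n$ and $t$, there are $\binom{r}{t}$ choices for which $t$ coordinates are nonzero, $2^t$ independent sign choices for those coordinates, and $\binom{n-1}{t-1}$ ways to write $n$ as an ordered composition into $t$ positive parts (giving the magnitudes $|c_i|$). Substituting $r = (s-1)/2$ and summing over $n$ and $t$ reproduces exactly the bound in \eqref{eq:CombinatorialStampSetCardinality}.

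The only thing to be careful about is that this enumeration is genuinely an upper bound, not an equality: distinct ordered sequences $(\vec{k}_a^1, \ldots, \vec{k}_a^N)$ can collapse to the same coefficient tuple $(c_1, \ldots, c_r)$, and distinct $(\vec{k}_f, (c_i))$ pairs can represent the same lattice point. Neither issue affects correctness of the upper bound, so no injectivity argument is needed — this is the main reason the argument stays elementary. The remaining boundary bookkeeping at $t = 0$ is handled by the convention $\binom{n-1}{-1} = [n = 0]$, which matches the single all-zero tuple at $n = 0$ and contributes nothing otherwise.
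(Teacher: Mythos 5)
Your proposal is correct and follows essentially the same route as the paper: both arguments pair the nonzero stamp frequencies into $r = (s-1)/2$ generators and their negatives, collapse each element of $\mathcal{S}^N$ to a base frequency in $\supp(\hat{f})$ plus a signed multiplicity vector, and count those vectors stratified by total multiplicity $n$ and support size $t$ to get $2^t\binom{r}{t}\binom{n-1}{t-1}$. The only cosmetic difference is that the paper first decomposes $\mathcal{S}^N$ into the disjoint level sets $\mathcal{S}^n\setminus\mathcal{S}^{n-1}$ and uses minimality of $n$ to rule out cancelling pairs, whereas you fold any cancellation directly into the inequality $\sum_i\abs{c_i}\leq N$; both yield the identical double sum.
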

\begin{proof}
	We begin by separating $ \mathcal{S}^N $ into the disjoint pieces
	\begin{equation*}
		\mathcal{S}^N = \bigsqcup_{ n = 0 }^{ N } \left(\mathcal{S}^n \setminus \left( \bigcup_{ i = 0 }^{ n - 1 } \mathcal{S}^i \right) \right)
	\end{equation*}
	and computing the cardinality of each of these sets (where we take $ S^{ -1 } = \emptyset $).
	If $ \vec{ k } \in \mathcal{S}^n \setminus \left( \cup_{ i = 0 }^{ n - 1 } \mathcal{S}^i \right) $, then we are able to write $ \vec{ k } $ as
	\begin{equation}
		\label{eq:StampFrequencySum}
		\vec{ k } = \vec{ k }_f + \sum_{ m = 1 }^n \vec{ k }_a^m
	\end{equation}
	where $ \vec{ k }_f \in \supp(\hat{ f }) $ and $ \vec{ k }_a^m \in \supp(\hat{ a }) \setminus \{ 0 \} $ for all $ m = 1, \ldots, n $.
	Additionally, since $ \vec{ k } $ is not in any earlier stamping sets, this is the smallest $ n $ for which this is possible.
	In particular, it is not possible for any two frequencies in the sum to be negatives of each other resulting in pairs of cancelled terms.

	With this summation in mind, arbitrarily split $ \supp( \hat{ a } ) \setminus \{ \vec{ 0 } \} $ into $ A \sqcup -A $ (i.e., place all frequencies which do not negate each other into $ A $ and their negatives in $ -A $).
	By collecting like frequencies that occur as a $ \vec{ k }_a^m $ term in \eqref{eq:StampFrequencySum}, we can rewrite this sum as
	\begin{equation}
		\label{eq:StampFrequencyEnumeration}
		\vec{ k } = \vec{ k }_f + \sum_{ \vec{ k }_a \in A } s(\vec{ k }, \vec{ k }_a) m(\vec{ k }, \vec{ k }_a) \vec{ k }_a,
	\end{equation}
	where the sign function $ s(\vec{ k }, \vec{ k }_a) $ is given by
	\begin{equation*}
		s(\vec{ k }, \vec{ k }_a) := 
		\begin{cases}
			1 &\text{if $ \vec{ k }_a $ is a term in the summation \eqref{eq:StampFrequencySum}}\\
			-1 &\text{if $ -\vec{ k }_a $ is a term in the summation \eqref{eq:StampFrequencySum}}\\
			0 &\text{otherwise}
		\end{cases}
	\end{equation*}
	and the multiplicity function $ m(\vec{ k }, \vec{ k }_a) $ is defined as the number of times that $ \vec{ k }_a $ or $ -\vec{ k }_a $ appears as a $ \vec{ k }_a^m $ term in \eqref{eq:StampFrequencySum}.
	Letting $ \vec{ s }(\vec{ k }) := (s(\vec{ k }, \vec{ k }_a))_{ k_a \in A } $ and $ \vec{ m }(\vec{ k }) := (m(\vec{ k }, \vec{ k }_a))_{ k_a \in A } $, we can then identify any $ \vec{ k } \in \mathcal{S}^n \setminus \left( \cup_{ i = 0 }^{ n - 1 } \mathcal{S}^i \right) $ with the tuple 
	\begin{equation*}
		(\vec{ k }_f, \vec{ s }(\vec{ k }), \vec{ m }(\vec{ k })) \in \supp(\vec{ f }) \times \{-1, 0, 1\}^{ A } \times \{0, \ldots, n\}^A.
	\end{equation*}
	Upper bounding the number of these tuples that can correspond to a value of $ \vec{ k } \in \mathcal{S}^n \setminus \left( \cup_{ i = 0 }^{ n - 1 } \mathcal{S}^i \right) $ will then upper bound the cardinality of this set.

	Since any $ \vec{ k }_f \in \supp(\hat{ f }) $ can result in a valid $ \vec{ k } $ value, we will focus on the pairs of sign and multiplicity vectors.
	Define by $ T_n \subset \{-1, 0, 1\}^A \times \{0, \ldots, n\}^A $ the set of valid sign and multiplicity pairs that can correspond to a $ \vec{ k } \in \mathcal{S}^n \setminus \left( \cup_{ i = 0 }^{ n - 1 } \mathcal{S}^i \right)  $.
	In particular, for $ (\vec{ s }, \vec{ m }) \in T_n $, $ \norm{ \vec{ m } }_1 = n $ and $ \supp(\vec{ s } ) = \supp( \vec{ m } ) $.
	Thus, we can write
	\begin{equation*}
		T_n \subset \bigsqcup_{ t = 0 }^{ \min(n, |A|) } \left\{ (\vec{ s }, \vec{ m }) \in \{-1, 0, 1\}^A \times \{0, \ldots, n\}^A \mid \norm{ \vec{ m } }_1 = n \text{ and } |\supp(\vec{ s })| = |\supp(\vec{ m })| = t \right\}.
	\end{equation*}
	This inner set then corresponds to the $ t $-partitions of the integer $ n $ spread over the $ |A| $ entries of $ \vec{ m } $ where each non-zero term is assigned a sign $ -1 $ or $ 1 $.
	The cardinality is therefore $ 2^t \binom{ |A| }{ t }\binom{n - 1}{t - 1} $: the first factor is from the possible sign options, the second is the number of ways to choose the entries of $ \vec{ m } $ which are nonzero, and the last is the number of $ t $-partitions of $ n $ which will fill the nonzero entries of $ \vec{ m } $.
	Noting that $ |A| = \frac{s - 1}{2} $, our final cardinality estimate is
	\begin{align*}
		\abs{\mathcal{S}^N} 
			&= \sum_{ n = 0 }^{ N } \abs{\mathcal{S}^n \setminus \left( \bigcup_{ i = 0 }^{ n - 1 } \mathcal{S}^i \right)}\\
			& \leq \sum_{ n = 0 }^N \abs{\supp(\hat{ f })} |T_n|\\
			& \leq \abs{ \supp(\hat{ f }) } \sum_{ n = 0 }^N \sum_{ t = 0 }^{ \min(n, (s-1)/2) } 2^{ t } \binom{(s-1)/2}{t} \binom{n - 1}{t - 1}
	\end{align*}
	as desired.
\end{proof}

Though this upper bound is much tighter than the one given in the main text, it is harder to parse.
As such, we simplify it to the bound presented in Lemma~\ref{lem:StampSizeUpperBound}, restated here for convenience.
{
\renewcommand{\thelem}{\ref{lem:StampSizeUpperBound}}
\begin{lem}
Suppose that $ \vec{ 0 } \in \supp(\hat{ a }) $, $ \supp(\hat{ a }) = -\supp(\hat{ a }) $, and $ \abs{ \supp(\hat{ f }) } \leq \abs{ \supp(\hat{ a }) } = s $
	Then
	\begin{equation*}
		\abs{ \mathcal{S}^N[\hat{ a }](\supp (\hat{ f }))} \leq 7 \max(s, 2N + 1)^{ \min(s, 2N + 1)  }.
	\end{equation*}
\end{lem}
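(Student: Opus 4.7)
The plan is to start from the refined double-sum bound of Lemma~\ref{lem:StampSetCardinality} and split the argument into two regimes depending on which of $s$ and $2N+1$ is larger, since the target $7\max(s,2N+1)^{\min(s,2N+1)}$ itself changes form at this crossover.

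In the regime $s > 2N+1$, I would bypass the refined bound and use only the crude Minkowski estimate: iterating $\mathcal{S}^n = \mathcal{S}^{n-1} + \supp(\hat{a})$ yields $|\mathcal{S}^N|\leq |\supp(\hat{f})|\cdot s^N \leq s^{N+1}$. In this regime the target becomes $7 s^{2N+1}$, and since $N+1\leq 2N+1$ for $N\geq 0$, the bound follows immediately (in fact with constant $1$).

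In the regime $s\leq 2N+1$, set $m:=(s-1)/2$, which is a nonnegative integer since the hypotheses $\vec{0}\in\supp(\hat{a})$ and $\supp(\hat{a})=-\supp(\hat{a})$ force $s$ to be odd. Starting from Lemma~\ref{lem:StampSetCardinality} with $|\supp(\hat{f})|\leq s$ and the crude estimates $2^t\leq 2^m$ and $\binom{n-1}{t-1}\leq\binom{n}{t}$ (for $t\geq 1$), I would collapse the inner sum by Vandermonde's identity $\sum_{t=0}^m\binom{m}{t}\binom{n}{t}=\binom{m+n}{m}$ and the outer sum by the hockey stick identity $\sum_{n=0}^N\binom{m+n}{m}=\binom{m+N+1}{m+1}$, arriving at
\[
|\mathcal{S}^N|\leq s\cdot 2^m\binom{m+N+1}{m+1}.
\]
The hypothesis $s\leq 2N+1$ is precisely $m\leq N$, so $m+N+1\leq 2N+1$ and thus $\binom{m+N+1}{m+1}\leq (2N+1)^{m+1}/(m+1)!$. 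Using $2N+1\geq s$ to trade one factor of $2N+1$ for $s$ leaves a bound of the form $\bigl[s\cdot 2^m/((m+1)!\, s^m)\bigr](2N+1)^s$, where the bracketed constant can be checked to be at most $1$ for all odd $s\geq 1$ (it equals $1$ at $s\in\{1,3\}$ and decays once the factorial outpaces the ratio $2^m/s^m\leq 1$), well within the factor of $7$.

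The main obstacle I anticipate is threading the double sum of Lemma~\ref{lem:StampSetCardinality} through enough controlled simplifications to reach a clean closed form without losing so much tightness that the final numerical constant blows past $7$. The Vandermonde–hockey-stick collapse is the decisive algebraic step; once the inequality $m+N+1\leq 2N+1$ is invoked at the right moment, the remaining verification of the constant is a one-line case check for small $s$ combined with the observation that the factorial denominator dominates thereafter.
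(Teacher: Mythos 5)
Your proof is correct, and while it shares the paper's overall scaffolding (a case split on whether $s$ or $2N+1$ is larger, with Lemma~\ref{lem:StampSetCardinality} supplying the hard estimate), the work inside each case is genuinely different. In the regime $s > 2N+1$ the paper still grinds through the double sum of Lemma~\ref{lem:StampSetCardinality}, bounding the inner sum by a geometric series in $2r^2$ and arriving at roughly $s^{2N+1}$; your observation that the trivial Minkowski product bound already gives $\abs{\mathcal{S}^N} \leq \abs{\supp(\hat{f})}\, s^N \leq s^{N+1} \leq s^{2N+1}$ is both simpler and strictly sharper, and it sidesteps the combinatorial lemma entirely in that case. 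In the regime $s \leq 2N+1$ the paper splits the outer sum at $n = r$ and controls the tail by comparison with the integral $\int_r^N n^{2r}\,dn$, whereas you collapse the double sum exactly via Vandermonde ($\sum_t \binom{m}{t}\binom{n}{t} = \binom{m+n}{m}$) and the hockey-stick identity into the closed form $s\, 2^m \binom{m+N+1}{m+1}$, after which the inequality $m + N + 1 \leq 2N+1$ and the crude estimate $\binom{2N+1}{m+1} \leq (2N+1)^{m+1}/(m+1)!$ finish the job with a leading constant $2^m/\bigl((m+1)!\, s^{m-1}\bigr) \leq 1$. Your parity observation that $s = 2m+1$ is forced to be odd by $\vec{0} \in \supp(\hat{a})$ and the symmetry of the support is also correct and worth making explicit, since the paper uses $(s-1)/2$ as an integer without comment. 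The net effect is a cleaner argument with a better constant (you never need the full allowance of $7$); what the paper's integral-comparison route buys instead is that it generalizes more readily if one wanted asymptotics in $N$ for fixed $s$ without invoking exact binomial identities, but for the stated bound your derivation is preferable.
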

\addtocounter{lem}{-1}
}
\begin{proof}
	Let $ r = (s - 1) / 2 $.
	We consider two cases:
	\begin{description}
		\item[Case 1: $ r \geq N $] 
			We estimate the innermost sum of \eqref{eq:CombinatorialStampSetCardinality}.
			Since $ r \geq N \geq n$, $ \min(n, (s - 1) / 2) = n $.
			By upper bounding the binomial coefficients with powers of $ r $, we obtain
			\begin{align*}
				\sum_{ t = 0 }^{ n } 2^t \binom{r}{t} \binom{n-1}{t-1}
					&\leq \sum_{ t = 0 }^n 2^t (r^t)^2 \\
					&\leq 2(2r^2)^{ n }
			\end{align*}
			where the second estimate follows from the approximating the geometric sum.
			Again, bounding the next geometric sum by double the largest term, we have
			\begin{equation*}
				\abs{ \mathcal{S}^N } \leq \abs{ \supp(\hat{ f }) } \sum_{ n = 0 }^N 2(2s^2)^n \leq (2r + 1) 4(2r^2)^N \leq 2(2r + 1)^{ 2N + 1 } = s^{ 2N + 1 }.
			\end{equation*}
		\item[Case 2: $ r < N $] 
			Bounding the innermost sum of \eqref{eq:CombinatorialStampSetCardinality} proceeds much the same way as Case~1, but we must first split the outermost sum into the first $ r + 1 $ terms and last $ N - r $ terms.
			Working with the first terms, we find
			\begin{equation*}
				\sum_{ n = 0 }^r \sum_{ t = 0 }^{ n } 2^t \binom{r}{t} \binom{n-1}{t-1} \leq 4(2r^2)^r
			\end{equation*}
			using the argument in Case~1.
			Now, we bound
			\begin{align*}
				\sum_{ n = r + 1 }^N \sum_{ t = 0 }^{ r } 2^t \binom{r}{t} \binom{n-1}{t-1} 
					&\leq \sum_{ n = r + 1 }^N 2(2(n - 1)^2)^r \\
					&\leq 2^{ r + 1 } \int_{ r }^{ N } n^{ 2r } \, dn\\
					&\leq \sqrt{ 2 }\frac{(\sqrt{ 2 }N)^{ 2r + 1 }}{2r + 1}.
			\end{align*}
			Thus,
			\begin{equation*}
				\abs{ \mathcal{S}^N } \leq \abs{ \supp(\hat{ f }) }\left[ 4(2r^2)^r + \sqrt{ 2 } \frac{(\sqrt{ 2 }N)^{ 2r + 1 }}{2r + 1} \right]  \leq 5\sqrt{ 2 }\left( \sqrt{ 2 }N \right)^s \leq 7 (2N + 1)^s.
			\end{equation*}
		Combining the two cases gives the desired upper bound.
	\end{description}
\end{proof}

\section{Proof of SFT recovery guarantees}\label{sec:proof_of_sft_recovery_guarantees}

We restate the theorem for convenience.

{
\renewcommand{\thethm}{\ref{thm:SFTRecovery}}
\begin{thm}[\cite{gross_sparse_2021}, Corollary 2]
Let $ \mathcal{I} \subset \Z^d$ be a frequency set of interest with expansion defined as $ K := \max_{ j \in \{1, \ldots, d\} } (\max_{ \vec{ k } \in \mathcal{I} } k_j - \min_{ \vec{ l } \in \mathcal{I}} l_j ) $ (i.e., the sidelength of the smallest hypercube containing $ \mathcal{I} $), and $ \Lambda( \vec{ z }, M) $ be a reconstructing rank-1 lattice for $ \mathcal{I} $.

	There exists a fast, randomized SFT which, given $ \Lambda(\vec{ z }, M) $, sampling access to $ g \in L^2 $, and a failure probability $ \sigma \in (0, 1] $, will produce a $ 2s $-sparse approximation $ \hat{ \vec{ g } }^s $ of $ \hat g $ and function $ g^s := \sum_{ \vec{ k } \in \supp( \hat{ \vec{ g }}^s) } \hat g_{ \vec{ k } }^s e_\vec{ k } $ approximating $ g $ satisfying
	\begin{align*}
		\norm{ g - g^s }_{ L^2 } \leq \norm{ \hat g - \hat{ \vec{ g } }^s }_{ \ell^2 } 
			&\leq (25 + 3K) \left[ \frac{\norm{\hat{ g }\restrict{ \mathcal{I} } - (\hat{ g }\restrict{ \mathcal{I} })_s^\mathrm{opt}}_1}{\sqrt{ s }} + \sqrt{ s } \norm{ \hat{ g } - \hat{ g }\restrict{ \mathcal{I} } }_1 \right]
	\end{align*}
	with probability exceeding $ 1 - \sigma $.
	If $ g \in L^\infty $, then we additionally have
	\begin{equation*}
		\norm{ g - g^s }_{ L^\infty } \leq \norm{ \hat g - \hat{ \vec{ g } }^s }_{ \ell^1 } \leq (33 + 4K) \left[ \norm{ \hat{ g } \restrict{ \mathcal{I} } - (\hat{ g }\restrict{ \mathcal{I} })_s^\mathrm{opt} }_1 + \norm{ \hat{ g } - \hat{ g }\restrict{ \mathcal{I} } }_1 \right]
	\end{equation*}
	with the same probability estimate.
	The total number of samples of $ g $ and computational complexity of the algorithm can be bounded above by
	\begin{equation*}
		\mathcal{O} \left( d s \log^3( d K M) \log \left( \frac{d K M }{\sigma} \right)  \right).
	\end{equation*}
\end{thm}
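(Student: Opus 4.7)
Both inequality chains in the theorem reduce to bounds on $\norm{\hat g - \hat{\vec g}^s}_{\ell^2}$ and $\norm{\hat g - \hat{\vec g}^s}_{\ell^1}$: the first yields $\norm{g - g^s}_{L^2} \le \norm{\hat g - \hat{\vec g}^s}_{\ell^2}$ by Proposition~\ref{prop:Plancherel}, and the second yields $\norm{g - g^s}_{L^\infty} \le \norm{\hat g - \hat{\vec g}^s}_{\ell^1}$ by writing $g - g^s = \sum_{\vec k} (\hat g_{\vec k} - \hat{\vec g}^s_{\vec k}) e_{\vec k}$ and taking magnitudes. For the $\ell^2$ estimate, the plan is to cite the existing proof of Corollary 2 of \cite{gross_sparse_2021} essentially verbatim, absorbing the various constants appearing there into the single prefactor $(25 + 3K)$; the sample complexity and failure-probability bounds then carry over directly.

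The $\ell^1$ bound is the substantive new content. The plan is to split $\hat g - \hat{\vec g}^s$ into three frequency regions and bound each separately. Let $\Omega := \supp(\hat{\vec g}^s) \cup \supp\bigl((\hat g\restrict{\mathcal I})_s^{\mathrm{opt}}\bigr)$, which has cardinality at most $3s$. The SFT returns an output supported in $\mathcal I$, so on $\Z^d \setminus \mathcal I$ the error equals $\hat g$ and contributes $\norm{\hat g - \hat g\restrict{\mathcal I}}_1$; on $\mathcal I \setminus \Omega$ the error equals $\hat g\restrict{\mathcal I \setminus \Omega}$, which has $\ell^1$-norm at most $\norm{\hat g\restrict{\mathcal I} - (\hat g\restrict{\mathcal I})_s^{\mathrm{opt}}}_1$ since $\Omega$ contains the best-$s$-term support inside $\mathcal I$. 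Both contributions appear with coefficient one and are already compatible with the claimed $(33 + 4K)$ bound.

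The remaining piece is the $\ell^1$-norm of the error on $\Omega$. The plan here is to open up the coefficient-estimation stage of the SFT in \cite{gross_sparse_2021}: on the univariate rank-1 lattice image, each value recovered for $\hat g_{\vec k}$ is the true coefficient plus aliasing contributions from (a) the small in-$\mathcal I$ coefficients and (b) the out-of-$\mathcal I$ tail, and each such contribution can be controlled \emph{pointwise} by quantities of order $K/s$ times $\norm{\hat g\restrict{\mathcal I} - (\hat g\restrict{\mathcal I})_s^{\mathrm{opt}}}_1$ and $\norm{\hat g - \hat g\restrict{\mathcal I}}_1$ respectively, using the reconstructing property of the lattice. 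Summing these at most $3s$ pointwise errors directly in $\ell^1$---rather than squaring, aggregating, and taking a root as in the $\ell^2$ analysis---cancels the cardinality factor $3s$ against the $1/s$ per-entry scaling, leaving a prefactor scaling like $K$ that is absorbed into $(33 + 4K)$.

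The main obstacle will be extracting the pointwise $\ell^\infty$ aliasing bounds from the algorithm's internals. The original $\ell^2$ proof aggregates aliasing energy in one step and does not separately track per-coefficient magnitudes, so a naive black-box conversion via $\norm{v}_1 \le \sqrt{|\supp v|}\,\norm{v}_2$ would leave a spurious factor of $\sqrt{s}$ attached to $\norm{\hat g - \hat g\restrict{\mathcal I}}_1$. Avoiding this requires threading a pointwise bound through the lattice hashing and median-type estimation stages so that each recovered entry is controlled in $\ell^\infty$ before any summation over $\Omega$, and this refined per-entry analysis is the technical heart of the appendix.
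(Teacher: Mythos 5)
Your proposal is correct and matches the paper's argument in its essentials: the paper likewise obtains the $\ell^1$/$L^\infty$ bound by rerunning the analysis of \cite{gross_sparse_2021} with $\ell^2$ norms replaced by $\ell^1$ norms, and the decisive step is exactly the one you flag --- controlling the error on the recovered support by a per-entry ($\ell^\infty$) aliasing estimate multiplied by the support cardinality, rather than by a black-box $\norm{v}_1 \le \sqrt{\abs{\supp v}}\,\norm{v}_2$ conversion, which is precisely what turns $\sqrt{2s}$ into $2s$ in the paper's modified version of [Lemma 4] and replaces ``square root of the number of elements'' by ``number of elements'' in its modified [Lemma 1]. The only real difference is presentational: the paper organizes the argument as a line-by-line diff of [Lemma 4], [Theorem 2, Property 3], and [Lemma 1] of \cite{gross_sparse_2021}, whose final estimate $(\beta + \eta_\infty)\max(s - \abs{\mathcal{S}_\beta}, 0) + \eta_1 + \norm{c\restrict{\mathcal{I}} - c\restrict{\mathcal{S}_\beta}}_1 + \norm{c - c\restrict{\mathcal{I}}}_1$ already encodes your three-region support decomposition, so no new decomposition needs to be introduced.
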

\addtocounter{thm}{-1}
}
\begin{proof}
	The $ L^2 $ upper bound is mostly the same as the original result.
	We are not considering noisy measurements here which removes the $ \sqrt{ s } e_\infty $ term from that result (though, this could be added back in if desired).
	Additionally, we have upper bounded $ \norm{ \hat{ g } - \hat{ g }\restrict{ \mathcal{I} } }_2 $ by $ \sqrt{s} \norm{ \hat{ g } - \hat{ g }\restrict{ \mathcal{I} } }_1 $ adding one to the constant.

	The $ L^\infty $ / $ \ell^1 $ bound was not given in the original paper, but can be proven using the same techniques.
	In particular, replacing the $ \ell^2 $ norm by the $ \ell^1 $ norm in \cite[Lemma 4]{gross_sparse_2021} has the effect of replacing all $ \ell^2 $ norms with $ \ell^1 $ norms and replacing $ \sqrt{ 2s } $ by $ 2s $.
	This small change cascades through the proof of Property~3 in \cite[Theorem 2]{gross_sparse_2021} (again, with $ \ell^2 $ norms replaced by $ \ell^1 $ norms) to produce the univariate $ \ell^1 $ upper bound (in the language of the original paper)
	\begin{equation*}
		\norm{ \hat{ \vec{ a } } - \vec{ v } }_1 \leq \norm{ \hat{ \vec{ a } } - \hat{ \vec{ a } }_{ 2s }^\mathrm{opt} }_1 + (16 + 6 \sqrt{ 2 }) \left(\norm{ \hat{ \vec{ a } } - \hat{ \vec{ a } } _{ s }^\mathrm{opt} }_1 + s (\norm{ \hat{ a } - \hat{ \vec{ a } } }_1 + \norm{ \mu }_\infty)\right) =: \eta_1.
	\end{equation*}
	
	A similar logic applies to revising the proof of \cite[Lemma 1]{gross_sparse_2021}.
	Equation~(4) with all $ \ell^2 $ norms replaced by $ \ell^1 $ norms is derived the same way, and the first term is upper bounded by the maximal entry of the vector multiplied by the number of elements without the square root.
	The remainder of the proof carries through without change which leads to a final error estimate of
	\begin{equation*}
	\norm{ \vec{ b } - c }_{ \ell^2 } \leq (\beta + \eta_\infty) \max(s - \abs{ \mathcal{S}_\beta}, 0 ) +  \eta_1 + \norm{ c\restrict{ \mathcal{I} } - c\restrict{ \mathcal{S}_\beta } }_{ 1 } + \norm{ c - c\restrict{ \mathcal{I} } }_1.
	\end{equation*}
	Finally, the proof of \cite[Corollary 2]{gross_sparse_2021} follows using the same logic as the original substituting these revised upper bounds.
\end{proof}

\section*{Acknowledgements}
This work was supported in part by the National Science Foundation Award Numbers DMS 2106472 and 1912706.
This work was also supported in part through computational resources and services provided by the Institute for Cyber-Enabled Research at Michigan State University.
We thank Lutz K\"ammerer for helpful discussions related to random rank-1 lattice construction and Ben Adcock and Simone Brugiapaglia for motivating discussions related to compressive sensing and high-dimensional PDEs.

\bibliographystyle{amsplain}
\bibliography{References}

\providecommand{\bysame}{\leavevmode\hbox to3em{\hrulefill}\thinspace}
\providecommand{\MR}{\relax\ifhmode\unskip\space\fi MR }
% \MRhref is called by the amsart/book/proc definition of \MR.
\providecommand{\MRhref}[2]{%
  \href{http://www.ams.org/mathscinet-getitem?mr=#1}{#2}
}
\providecommand{\href}[2]{#2}
\begin{thebibliography}{10}

\bibitem{bittens2019deterministic}
Sina Bittens, Ruochuan Zhang, and Mark~A Iwen, \emph{A deterministic sparse
  {FFT} for functions with structured {F}ourier sparsity}, Advances in
  Computational Mathematics \textbf{45} (2019), no.~2, 519--561.

\bibitem{boyd_chebyshev_2001}
John~P. Boyd, \emph{Chebyshev and {Fourier} spectral methods}, 2nd ed., rev
  ed., Dover Publications, Mineola, N.Y, 2001.

\bibitem{brugiapaglia_waveletfourier_2020}
S~Brugiapaglia, S~Micheletti, F~Nobile, and S~Perotto,
  \emph{Wavelet–{Fourier} {CORSING} techniques for multidimensional
  advection–diffusion–reaction equations}, IMA Journal of Numerical
  Analysis (2020), no.~draa036.

\bibitem{brugiapaglia_compressed_2015}
S.~Brugiapaglia, S.~Micheletti, and S.~Perotto, \emph{Compressed solving: {A}
  numerical approximation technique for elliptic {PDEs} based on compressed
  sensing}, Computers \& Mathematics with Applications \textbf{70} (2015),
  no.~6, 1306--1335 (en).

\bibitem{brugiapaglia_compressed_2016}
Simone Brugiapaglia, \emph{{COmpRessed} {SolvING}: {Sparse} {Approximation} of
  {PDEs} based on compressed sensing}, Ph.D. thesis, Polytecnico Di Milano,
  Milan, Italy, January 2016.

\bibitem{brugiapaglia_compressive_2020}
\bysame, \emph{A compressive spectral collocation method for the diffusion
  equation under the restricted isometry property}, Quantification of
  {Uncertainty}: {Improving} {Efficiency} and {Technology}: {QUIET} selected
  contributions (Marta D'Elia, Max Gunzburger, and Gianluigi Rozza, eds.),
  Lecture {Notes} in {Computational} {Science} and {Engineering}, Springer
  International Publishing, Cham, 2020, pp.~15--40 (en).

\bibitem{brugiapaglia_sparse_2021}
Simone Brugiapaglia, Sjoerd Dirksen, Hans~Christian Jung, and Holger Rauhut,
  \emph{Sparse recovery in bounded {Riesz} systems with applications to
  numerical methods for {PDEs}}, Applied and Computational Harmonic Analysis
  \textbf{53} (2021), 231--269 (en).

\bibitem{brugiapaglia_theoretical_2018}
Simone Brugiapaglia, Fabio Nobile, Stefano Micheletti, and Simona Perotto,
  \emph{A theoretical study of {COmpRessed} {SolvING} for
  advection-diffusion-reaction problems}, Mathematics of Computation
  \textbf{87} (2018), no.~309, 1--38 (en).

\bibitem{bungartz_sparse_2004}
Hans-Joachim Bungartz and Michael Griebel, \emph{Sparse grids}, Acta Numerica
  \textbf{13} (2004), 147--269 (en), Publisher: Cambridge University Press.

\bibitem{canuto_spectral_2006}
Claudio Canuto, M.~Yousuff Hussaini, Alfio Quarteroni, and Thomas~A. Zang,
  \emph{Spectral methods: {Fundamentals} in single domains}, Scientific
  {Computation}, Springer-Verlag, Berlin Heidelberg, 2006 (en).

\bibitem{cohen_compressed_2009}
Albert Cohen, Wolfgang Dahmen, and Ronald DeVore, \emph{Compressed sensing and
  best $k$-term approximation}, Journal of the American Mathematical Society
  \textbf{22} (2009), no.~1, 211--231 (en).

\bibitem{dung_hyperbolic_2018}
Dinh D\~ung, Vladimir Temlyakov, and Tino Ullrich, \emph{Hyperbolic cross
  approximation}, Advanced {Courses} in {Mathematics} - {CRM} {Barcelona},
  Springer International Publishing, Cham, 2018 (en).

\bibitem{daubechies_sparse_2007}
Ingrid Daubechies, Olof Runborg, and Jing Zou, \emph{A sparse spectral method
  for homogenization multiscale problems}, Multiscale Modeling \& Simulation
  \textbf{6} (2007), no.~3, 711--740, Publisher: Society for Industrial and
  Applied Mathematics.

\bibitem{dohler_nonequispaced_2010}
Michael D\"ohler, Stefan Kunis, and Daniel Potts, \emph{Nonequispaced
  hyperbolic cross fast fourier transform}, SIAM Journal on Numerical Analysis
  \textbf{47} (2010), no.~6, 4415--4428, Publisher: Society for Industrial and
  Applied Mathematics.

\bibitem{evans_partial_2010}
Lawrence~C. Evans, \emph{Partial differential equations}, second edition ed.,
  Graduate studies in mathematics, no. v. 19, American Mathematical Society,
  Providence, R.I, 2010.

\bibitem{gilbert2002near}
Anna~C Gilbert, Sudipto Guha, Piotr Indyk, Shanmugavelayutham Muthukrishnan,
  and Martin Strauss, \emph{Near-optimal sparse {F}ourier representations via
  sampling}, Proceedings of the thiry-fourth annual ACM symposium on Theory of
  computing, 2002, pp.~152--161.

\bibitem{gilbert2014recent}
Anna~C Gilbert, Piotr Indyk, Mark Iwen, and Ludwig Schmidt, \emph{Recent
  developments in the sparse {F}ourier transform: {A} compressed {F}ourier
  transform for big data}, IEEE Signal Processing Magazine \textbf{31} (2014),
  no.~5, 91--100.

\bibitem{golub_matrix_2013}
Gene~H. Golub and Charles~F. Van~Loan, \emph{Matrix computations}, fourth ed.,
  Johns Hopkins Studies in the Mathematical Sciences, Johns Hopkins University
  Press, Baltimore, MD, 2013.

\bibitem{gradinaru_fourier_2007}
V~Gradinaru, \emph{Fourier transform on sparse grids: {Code} design and the
  time dependent {S}chr\"odinger equation}, Computing (Wien. Print) \textbf{80}
  (2007), no.~1, 1--22, Place: Wien Publisher: Springer.

\bibitem{griebel_sparse_2007}
Michael Griebel and Jan Hamaekers, \emph{Sparse grids for the {S}chr\"odinger
  equation}, Special issue on molecular modelling \textbf{41} (2007), no.~2,
  215--247, Place: Les Ulis Publisher: EDP Sciences.

\bibitem{garcke_fast_2014}
\bysame, \emph{Fast discrete {Fourier} transform on generalized sparse grids},
  Sparse {Grids} and {Applications} - {Munich} 2012 (Jochen Garcke and Dirk
  Pflüger, eds.), vol.~97, Springer International Publishing, Cham, 2014,
  Series Title: Lecture Notes in Computational Science and Engineering,
  pp.~75--107 (en).

\bibitem{gross_dissertation_2023}
Craig Gross, \emph{Sparsity in the spectrum: sparse {F}ourier transforms and
  spectral methods for functions of many dimensions}, Ph.{D}., Michigan State
  University, East Lansing, Michigan, USA, May 2023 (in preparation).

\bibitem{gross_sparse_2021}
Craig Gross, Mark Iwen, Lutz Kämmerer, and Toni Volkmer, \emph{Sparse
  {Fourier} transforms on rank-1 lattices for the rapid and low-memory
  approximation of functions of many variables}, Sampling Theory, Signal
  Processing, and Data Analysis \textbf{20} (2021), no.~1, 1.

\bibitem{gross2021deterministic}
Craig Gross, Mark~A Iwen, Lutz K{\"a}mmerer, and Toni Volkmer, \emph{A
  deterministic algorithm for constructing multiple rank-1 lattices of
  near-optimal size}, Advances in Computational Mathematics \textbf{47} (2021),
  no.~6, 1--24.

\bibitem{hassanieh2012simple}
Haitham Hassanieh, Piotr Indyk, Dina Katabi, and Eric Price, \emph{Simple and
  practical algorithm for sparse {F}ourier transform}, Proceedings of the
  twenty-third annual ACM-SIAM symposium on Discrete Algorithms, SIAM, 2012,
  pp.~1183--1194.

\bibitem{iwen2010combinatorial}
Mark~A Iwen, \emph{Combinatorial sublinear-time {F}ourier algorithms},
  Foundations of Computational Mathematics \textbf{10} (2010), no.~3, 303--338.

\bibitem{kuo_function_2021}
Frances Kuo, Giovanni Migliorati, Fabio Nobile, and Dirk Nuyens, \emph{Function
  integration, reconstruction and approximation using rank-1 lattices},
  Mathematics of Computation \textbf{90} (2021), no.~330, 1861--1897 (en).

\bibitem{kupka_sparse_1997}
Friedrich Kupka, \emph{Sparse grid spectral methods for the numerical solution
  of partial differential equations with periodic boundary conditions},
  Ph.{D}., Universität Wien, Vienna, Austria, November 1997.

\bibitem{kammerer_interpolation_2012}
Lutz Kämmerer, Stefan Kunis, and Daniel Potts, \emph{Interpolation lattices
  for hyperbolic cross trigonometric polynomials}, Journal of Complexity
  \textbf{28} (2012), no.~1, 76--92 (en).

\bibitem{kammerer_approximation_2015}
Lutz Kämmerer, Daniel Potts, and Toni Volkmer, \emph{Approximation of
  multivariate periodic functions by trigonometric polynomials based on rank-1
  lattice sampling}, Journal of Complexity \textbf{31} (2015), no.~4, 543--576
  (en).

\bibitem{li_trigonometric_2003}
Dong Li and Fred~J. Hickernell, \emph{Trigonometric spectral collocation
  methods on lattices}, Recent advances in scientific computing and partial
  differential equations ({Hong} {Kong}, 2002), Contemp. {Math}., vol. 330,
  Amer. Math. Soc., Providence, RI, 2003, pp.~121--132. \MR{2011715}

\bibitem{merhi_new_2019}
Sami Merhi, Ruochuan Zhang, Mark~A. Iwen, and Andrew Christlieb, \emph{A new
  class of fully discrete sparse {Fourier} transforms: {Faster} stable
  implementations with guarantees}, Journal of Fourier Analysis and
  Applications \textbf{25} (2019), no.~3, 751--784 (en).

\bibitem{munthe-kaas_multidimensional_2012}
Hans Munthe-Kaas and Tor Sørevik, \emph{Multidimensional pseudo-spectral
  methods on lattice grids}, Applied Numerical Mathematics \textbf{62} (2012),
  no.~3, 155--165 (en).

\bibitem{plonka_numerical_2018}
Gerlind Plonka, Daniel Potts, Gabriele Steidl, and Manfred Tasche,
  \emph{Numerical {Fourier} analysis}, Applied and {Numerical} {Harmonic}
  {Analysis}, Springer International Publishing, Cham, 2018 (en).

\bibitem{rubio_numerical_2008}
A.D. Rubio, A.~Zalts, and C.D. El~Hasi, \emph{Numerical solution of the
  advection-reaction-diffusion equation at different scales}, Environmental
  Modelling \& Software \textbf{23} (2008), no.~1, 90--95 (en).

\bibitem{shen_sparse_2010}
Jie Shen and Li-Lian Wang, \emph{Sparse spectral approximations of
  high-dimensional problems based on hyperbolic cross}, SIAM Journal on
  Numerical Analysis \textbf{48} (2010), no.~3, 1087--1109, Publisher: Society
  for Industrial and Applied Mathematics.

\bibitem{wang_compressive_2022}
Weiqi Wang and Simone Brugiapaglia, \emph{Compressive fourier collocation
  methods for high-dimensional diffusion equations with periodic boundary
  conditions}, 2022.

\end{thebibliography}
	
\end{document}